\newtheorem{mainstep}{Theorem}
\newtheorem{theorem}{Theorem}[section]
\newtheorem{THEOREM}{Theorem}
\newtheorem{lemma}[theorem]{Lemma}
\newtheorem*{lemma*}{Lemma}
\newtheorem{corstep}[mainstep]{Corollary}
\newtheorem{corollary}[theorem]{Corollary}
\newtheorem{COROLLARY}[THEOREM]{Corollary}
\newtheorem{definition}[theorem]{Definition}
\newcommand{\proofstep}[2]{\par\bigskip\noindent\textbf{#1}{#2}}
\newcommand{\proofsubstep}[2]{\par\medskip\noindent\emph{#1}}{}
\theoremstyle{definition}
\newcommand{\N}{{\mathbb N}}
\newcommand{\Z}{{\mathbb Z}}
\newcommand{\Q}{{\mathbb Q}}
\newcommand{\B}{{\mathscr B}}
\renewcommand{\L}{{\mathcal L}}
\newcommand{\lp}{\left(}
\newcommand{\rp}{\right)}
\newcommand{\CalA}{{\mathcal A}}
\newcommand{\eps}{\varepsilon}
\newcommand{\thmIdec}{\mathscr}
\newcommand{\X}{\thmIdec X}
\newcommand{\Y}{\thmIdec Y}
\renewcommand{\S}{\thmIdec S}
\newcommand{\T}{\thmIdec T}
\newcommand{\RR}{\thmIdec R}
\newcommand{\K}{\thmIdec K}
\newcommand{\bLowercase}{\mathfrak{b}}
\DeclareMathOperator{\mask}{mask}
\DeclareMathOperator{\code}{code}
\DeclareMathOperator{\coeffs}{coeffs}
\DeclareMathOperator{\values}{value}
\newcommand{\virtuallypositive}{suitable for coding}
\newcommand{\Newpage}{}
\newcommand{\nocontentsline}[3]{}
\newcommand{\tocless}[2]{\bgroup\let\addcontentsline=\nocontentsline#1{#2}\egroup}
\newcommand{\sunref}[1]{}
\numberwithin{equation}{section}
\title[Diophantine equations over $\Z$]{Diophantine equations over~$\mathbb Z$: \\ universal bounds and parallel formalization}
\author[J. Bayer, M. David, M. Ha{\ss}ler, Yu. Matiyasevich, D. Schleicher]{Jonas Bayer \and Marco David \and Malte Ha{\ss}ler \and Yuri Matiyasevich \and Dierk Schleicher}
\begin{document}

\begin{abstract}
This paper explores multiple closely related themes: bounding the complexity of Diophantine equations over the integers and developing mathematical proofs in parallel with formal theorem provers. 

Hilbert's Tenth Problem (H10) asks about the decidability of Diophantine equations and has been answered negatively by Davis, Putnam, Robinson and Matiyasevich. It is natural to ask for which subclasses of Diophantine equations H10 remains undecidable. Such subclasses can be defined in terms of \emph{universal pairs}: bounds on the number of variables $\nu$ and degree $\delta$ such that all Diophantine equations can be rewritten in at most this complexity. Our work develops explicit universal pairs $(\nu, \delta)$ for integer unknowns, achieving new bounds that cannot be obtained by naive translations from known results over~$\N$.

In parallel, we have conducted a formal verification of our results using the proof assistant Isabelle. While formal proof verification has traditionally been applied a posteriori to known results, this project integrates formalization into the discovery and development process. In a final section, we describe key insights gained from this unusual approach and its implications for mathematical practice. Our work contributes both to the study of Diophantine equations and to the broader question of how mathematics is conducted in the 21\textsuperscript{st} century.
\end{abstract}

\maketitle

\begin{center}
    \textsc{Formal proof development} \url{https://gitlab.com/hilbert-10/universal-pairs} \cite{itp-paper}
\end{center}
\vspace{1.25\baselineskip}

\setcounter{tocdepth}{1}
\tableofcontents

\newpage
\addtocounter{section}{-1}

\section{Introduction}
\label{Sec:Intro}
\setcounter{mainstep}{3}
This paper has three motivations:
\begin{itemize}
\item[$\triangleright$]
bounding the complexity of Diophantine equations over the integers; 
\item[$\triangleright$]
formal verification of mathematical proofs by proof assistants; and
\item[$\triangleright$]
exploring the \emph{parallel development} of proofs and their formalization.
\end{itemize}

With the last point, we would like to emphasize the close relationship between our mathematical and formal motivations. The parallel development of a paper proof and a formal proof in this project has allowed for many synergies, which has greatly influenced the mathematical content and mathematical exposition in this text.

\subsection{Universal Pairs for Diophantine Equations}
A \emph{Diophantine equation} is a polynomial equation in several variables over $\N$ or over $\Z$, where the coefficients are always in $\Z$. In 1900, David Hilbert formulated his legendary problems that are nowadays known as the \emph{23 Hilbert Problems}~\cite{hilbert-problems}. His 10th problem asks for an algorithm to tell whether a given Diophantine equation has a solution (that is, a collection of input variables so that the value of the polynomial is zero). In 1970, Davis, Putnam, Robinson and Matiyasevich (DPRM) succeeded with a negative answer to this question: there is no such algorithm~\cite{dpr, exp-diophantine}. In other words, Diophantine equations are undecidable. The negative solution to H10 is really a positive result because it shows that Diophantine equations are more powerful than previously thought: every recursively enumerable subset of $\N$ is a Diophantine set. This fact is now known as the DPRM theorem.

It is natural to ask if there are subclasses of Diophantine equations that are already undecidable. The complexity of a Diophantine equation can be measured as a pair $(\nu,\delta)$ of positive integers, where $\nu$ is the number of unknowns and $\delta$ is the degree. The same set of solutions can be expressed by different equations which lead to different pairs $(\nu,\delta)$. In general, one can achieve low degrees by many unknowns, or conversely few unknowns by high degrees.

There are countably many Diophantine equations and countably many different Diophantine sets. A remarkable result is the existence of \emph{universal pairs}: a pair $(\nu,\delta)$ is called universal if every Diophantine subset of $\N$ can be described by a Diophantine equation that satisfies this complexity bound. The existence of such universal pairs has historically been a surprise---as illustrated by the following excerpt from Kreisel's review\footnote{\url{https://mathscinet.ams.org/mathscinet/article?mr=133227}} of an article by Davis, Putnam and Robinson~\cite{dpr}:

\begin{quote}
[...] it is not altogether plausible that all (ordinary) Diophantine problems are uniformly reducible to those in a fixed number of variables of fixed degree, which would be the case if all [recursively enumerable] sets were Diophantine.
\end{quote}

Universal pairs are different depending on whether unknowns in $\N$ or in $\Z$ are considered. Matiyasevich and Robinson have shown in 1975 that reducing the number of unknowns to 13 is possible in the case of $\N$~\cite{MR75}, later announcing a reduction to $9$ unknowns~\cite{M77}. First bounds for universal pairs over $\N$ were then given in an article by James P. Jones~\cite{9var}.

We are not aware of any non-trivial universal pairs when the unknowns are integers. Zhi-Wei Sun has shown that all Diophantine sets $\CalA \subseteq \N$ can be described by a Diophantine equation with 11 integer unknowns~\cite{sun-thesis, Sun}, but without investigating the degree of such an equation (see \cref{sec:sun}). 

The topic of this paper are universal pairs over $\Z$; one motivation for this fact is that Hilbert's original question was about Diophantine equations over $\Z$. It is quite elementary to turn an arbitrary Diophantine equation over $\Z$ into an equivalent equation over $\N$ and vice versa (every integer is the difference of two positive integers, and every positive integer is the sum of four squares). The goal of this paper is to develop bounds for universal pairs over $\Z$ in a more direct and efficient way, and specifically to keep the number of variables low.

\subsection{Formal Proof Verification Can Guide Mathematical Discovery} 
The mathematics of bounding the complexity of Diophantine equations requires intricate constructions with many moving parts. The reader will experience this first-hand throughout the following sections---just to give a first impression: writing down the Diophantine equation which yields our final universal pair will require defining more than 30 intermediate polynomials which are all related in various ways. Following these definitions, it is not difficult but highly error prone to verify every conclusion by hand. Mathematicians in the 21st century face a new opportunity to deal with such a challenge: formalization. 

A formally verified, or \emph{formalized}, result is a theorem whose definitions and proofs have been computer-checked for correctness step by step, down to the axioms of an underlying formal language. This is done using \emph{interactive theorem provers}, also called proof assistants, such as Lean~\cite{lean4}, Rocq~\cite{coq} and Isabelle~\cite{isabelle}.

Analogous to how computer algebra systems like Mathematica~\cite{mathematica} can mechanize symbolic computations like differentiation, interactive theorem provers mechanize reasoning. To do so, they implement a logical foundation such as set theory or, more commonly, type theory. Mathematical definitions and theorems can then be programmed in this formal language. 

Proof assistants are typically built around a small logical core that handles the actual proof checking. By keeping this part minimal, the system stays more reliable, since only bugs in the core could lead to incorrect proofs being accepted. On top of this core, additional features are added to make it easier to write and understand formalized mathematics, for example by allowing users to introduce custom notation and syntax.

Interactive theorem provers have been shown to be capable of handling highly complex mathematics such as the Kepler conjecture~\cite{kepler} and perfectoid spaces \cite{perfectoid-spaces}. Also (parts of) the negative result to Hilbert's 10th problem have been formalized in different proof assistants~\cite{lean-formalization, isabelle-formalization, coq-formalization, mizar-formalization}.

Given our previous work formalizing the DPRM theorem in Isabelle~\cite{isabelle-formalization}, we asked if a comparable process could help support our research on universal pairs. This led to the present case study, where we sought to answer the question: \emph{``Can a proof assistant aid the process of discovery in mathematical research, rather than just providing an a posteriori certificate of correctness for mathematical publications?''} 

A key feature of our text is that the following results have been developed in parallel on pen and paper and in the proof assistant Isabelle. As a consequence, even this pre-print appears with a certificate of correctness that can be checked on any modern computer in a few minutes. This is especially valuable because our main result is dependent on many error-prone calculations. In Section~\ref{Sec:Formalization}, we reflect on this rather unusual approach in detail and highlight several mistakes that might have gone unnoticed without formalization. All in all, we view our article also as a contribution to the ongoing debate on ``how to do mathematics in the 21st century''~\cite{qed-manifesto, qed-manifesto-2, proof-between-generations}.

\subsection{Statement of Results}

We now provide statements of the principal mathematical results in this paper, together with exact definitions of the terms used in the introduction. 

\begin{definition}[Diophantine set]
We say that a subset $\CalA \subseteq \N$ is \emph{Diophantine} over $\N$ (resp.\ over $\Z$) if there exists $\nu \in \N$ and a polynomial $P_{\CalA} \in \Z[X_0, X_1, ..., X_\nu]$ such that all $a\in\N$ satisfy
\begin{equation}
a \in \CalA \iff \exists\; \mathbf{z} \in \N^\nu (\textup{resp. } \mathbf{z}\in \Z^\nu) \colon P_{\CalA}(a,\mathbf{z}) = 0
\;.
\end{equation}
In this case, we say that \emph{the polynomial $P_{\CalA}$ \emph{represents} the Diophantine set $\CalA$.}
\end{definition}

The arguments of $P_\CalA$ have different characters: we call $a\in\N$ the \emph{parameter} of $P_\CalA$ and $\mathbf{z}$ the \emph{unknowns} of $P_\CalA$; both types together are known as the \emph{variables}. 

\begin{definition}[Universal pair]
A pair $(\nu,\delta)\in\N^2$ is called a \emph{universal pair over $\N$ (resp.\ over $\Z$)} if every Diophantine set $\CalA\subset\N$ is represented by a polynomial $P_\CalA$ with at most $\nu$ unknowns in $\N$ (resp. in $\Z$) and of degree at most $\delta$. 
\end{definition}

We write $(\nu,\delta)_\N$ and  $(\nu,\delta)_\Z$ in order to distinguish universal pairs over $\N$ resp.\ over $\Z$. Our main result is the following: 

\begin{THEOREM}[Universal pairs from $\N$ to $\Z$]
\label{step:FirstUniversalPair}
Let $(\nu, \delta)_\N$ be universal. Then
\[ \big(11, \eta(\nu, \delta) \big)_\Z \]
is universal where
\[
\eta(\nu, \delta) = 
15 \, 616 + 233\,856 \; \delta + 233\,952 \; \delta \, (2 \delta + 1)^{\nu+1} + 467\,712 \; \delta^2 \, (2 \delta + 1)^{\nu+1}
\, .
\]
\end{THEOREM}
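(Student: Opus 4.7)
The plan is to begin from a universal pair $(\nu,\delta)_\N$, apply a chain of reductions that first moves from natural to integer unknowns, then compresses the resulting equation down to the target $11$ integer unknowns in the style of Matiyasevich--Robinson and Sun, while carefully tracking the degree at each stage. Since the number of unknowns is already fixed at $11$ by Sun's work, the entire challenge is quantitative: keeping a precise bookkeeping of how each transformation inflates the degree.

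The first step is classical. Given a Diophantine set $\CalA \subseteq \N$ with a representing polynomial $P_\CalA(a, z_1, \dots, z_\nu)$ of degree $\delta$ over $\N$, replace each natural unknown $z_i$ by a sum of four squares $z_i = u_i^2 + v_i^2 + w_i^2 + x_i^2$ (Lagrange's theorem). This produces an equivalent equation over $\Z$ in $4\nu$ unknowns of degree $2\delta$. The parameter $a$ still ranges over $\N$, which is handled by parameter passing (or by another four-squares substitution contributing an additive overhead to $\eta$).

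The main work is then the compression from $4\nu$ integer unknowns down to $11$. For this I would invoke a coding mechanism of Matiyasevich--Robinson type: all $\nu$ unknowns (together with the parameter) are simultaneously encoded into a small fixed number of integer unknowns through a single ``universal'' Diophantine relation whose witnesses enforce the joint system $P_\CalA=0$. The coded system is then expressible with $11$ integer unknowns via Sun's construction. The degree cost of this coding is exactly where the factor $(2\delta + 1)^{\nu+1}$ enters, because the coding polynomial evaluates the original $P_\CalA$ at a geometric-progression-like substitution with ratio on the order of $2\delta+1$, once per each of the $\nu+1$ variables; the $\delta$ and $\delta^2$ prefactors track the two nested substitutions in the coded relation, and the large constants $15\,616$, $233\,856$, etc., collect the fixed overheads of Sun's 11-variable trick, of the four-squares substitutions, and of the auxiliary Pell-style equations needed to glue everything together.

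The hardest step is the third one: controlling the degree through the coding and Sun's reduction simultaneously. The combinatorial interaction between these two mechanisms is what forces the concrete shape of $\eta(\nu,\delta)$, since each factor contributed by the coding is multiplied by a constant reflecting how many copies appear in Sun's 11-variable scaffolding, and each additive constant reflects the degrees of the auxiliary polynomials expressing ``$x = y^2 + z^2 + u^2 + v^2$'', ``$x \mid y$'', ``$x$ is a power of $y$'' etc. in Sun's construction. I would organize the proof so that each reduction is stated as a lemma with an explicit degree bound of the form (new degree) $=$ (universal constant) $\cdot$ (old degree) $+$ (constant); the final bound $\eta(\nu,\delta)$ is then obtained by composing these linear-in-$\delta$ transformations and verifying that the resulting expression matches the stated polynomial in $\delta$ and $(2\delta+1)^{\nu+1}$. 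The formal verification in Isabelle provides a welcome safeguard precisely at this bookkeeping stage, since a single off-by-one constant or missed $\delta$ factor would invalidate the headline bound.
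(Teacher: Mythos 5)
Your first step points in the wrong direction and undermines the rest of the plan. Converting \emph{every} natural unknown $z_i$ to a sum of four squares produces $4\nu$ integer unknowns of degree $2\delta$, but that is precisely the ``trivial'' universal pair $(4\nu, 2\delta)_\Z$ the paper mentions in the introduction and explicitly sets aside. From there you have no tool to compress $4\nu$ \emph{integer} unknowns down to $11$: the Matiyasevich--Robinson relation combining (Lemma~\ref{lemma:MPoly}) and the coding machinery (Theorem~\ref{step:polynomial-to-binomial}, Theorem~\ref{step:Sun4}) are designed to consolidate unknowns ranging over $\N$, not $\Z$, because they rely on digit-by-digit base-$\B$ expansions, positivity, and bounds of the form $z_i \in [0, \bLowercase)$. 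The paper's proof keeps the $\N$-unknowns intact as long as possible: it applies Sun's construction (Definition~\ref{def:nine-unknown-polynomial}, justified by Theorem~\ref{thm:mainthm}) \emph{directly} to $P_\CalA$, landing on a polynomial $Q$ with eight \emph{integer} unknowns $f,g,h,k,l,w,x,y$ and a single residual \emph{natural} unknown $n$. Only then is the natural-to-integer conversion applied, to that one variable, via the Gauss--Legendre three-squares substitution $n = z_9^2 + z_{10}^2 + z_{11}^2 + z_{11}$ (Lemma~\ref{lemma:threeSquares}), giving exactly $8 + 3 = 11$ integer unknowns. Note also that the paper uses three squares, not four, and that the parameter $a$ never needs converting because the definition of a Diophantine set already fixes $a \in \N$.

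The second gap is that you treat ``the coding mechanism'' and ``Sun's $11$-variable trick'' as two separate steps whose interaction must be controlled; in the paper they are one and the same construction. Likewise your explanation for the $(2\delta+1)^{\nu+1}$ factor (a ``geometric-progression-like substitution with ratio on the order of $2\delta+1$'') gestures at the right phenomenon but misses where it actually arises: the construction first passes to $\overline{P} = P^2 + (z_{\nu+1}-1)^2$, which has $\nu' = \nu+1$ unknowns and degree $\delta' = 2\delta$, and then the exponents $(\delta'+1)^j = (2\delta+1)^j$ enter through the coefficient-encoding polynomial $\coeffs$ of Definition~\ref{Def:ConsolidatingPoly} and its propagation through $\B$, $M$, $N_0$, $N_1$, $N$, $\K$, $\S$, $\T$, $\RR$, $\X$, $\Y$ as tabulated in the appendix. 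A linear ``(new degree) $=$ constant $\cdot$ (old degree) $+$ constant'' bookkeeping, as you propose, does not reproduce these exponential terms; the correct bound requires the explicit degree chain through more than thirty intermediate polynomials, followed by the final computation of $\deg M_3(A_1,A_2,A_3,S,T,R,n)$ with the substitution $n = z_9^2+z_{10}^2+z_{11}^2+z_{11}$.
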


We can use this to calculate the following explicit universal pairs. 

\begin{COROLLARY}[Universal pairs over $\Z$]
\label{THM:UniversalPairZ}
The pairs
\begin{align*}
	& (11, 1\,681\,043\,235\,226\,619\,916\,301\,182\,624\,511\,918\,527\,834\,137\,733\,707\,408\,448\,335\,539\,840) \\
	& \approx (11, 1.68105 \cdot 10^{63})
\end{align*}
and
\begin{align*}
& (11, 950\,817\,549\,694\,171\,759\,711\,025\,515\,571\,236\,610\,412\,597\,656\,252\,821\,888)\\
& \approx (11, 9.50818 \cdot 10^{53})
\end{align*}
are universal over $\Z$.
\end{COROLLARY}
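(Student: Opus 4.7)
The plan is to apply \cref{step:FirstUniversalPair} to two universal pairs $(\nu,\delta)_\N$ taken from the literature, and to evaluate $\eta$ on each. Since every universal pair $(\nu,\delta)_\N$ gives rise via that theorem to a universal pair $\bigl(11,\eta(\nu,\delta)\bigr)_\Z$, the corollary reduces to exhibiting two inputs whose images under~$\eta$ are the two displayed integers.

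The classical source for universal pairs over~$\N$ is Jones~\cite{9var}, which provides a whole family of such pairs ranging from small degree with many unknowns down to very few unknowns with astronomically large degrees. Because
\[
\eta(\nu,\delta) = 15\,616 + 233\,856\,\delta + \bigl(233\,952\,\delta + 467\,712\,\delta^2\bigr)(2\delta+1)^{\nu+1}
\]
depends only polynomially on~$\delta$ but essentially like $(2\delta+1)^{\nu+1}$ on~$\nu$, different inputs produce very different outputs and there is a genuine trade-off to balance. I would therefore scan Jones's list, substitute each pair into~$\eta$, and identify the two choices whose images coincide with the two stated values, each corresponding to one point in this trade-off (a small-degree/large-$\nu$ input at one end, and a larger-degree/smaller-$\nu$ input at the other).

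Once the two input pairs have been fixed, the proof is completed by a direct numerical calculation: compute $(2\delta+1)^{\nu+1}$ as an exact integer, multiply by $233\,952\,\delta + 467\,712\,\delta^2$, and add the affine correction $15\,616 + 233\,856\,\delta$. The resulting $54$- or $64$-digit integer must match the corresponding number in the statement. Because both the exponentiation and the subsequent multiplications produce enormous integers, this step is entirely mechanical but highly error prone; it is exactly the sort of bookkeeping task where the parallel Isabelle formalization provides an independent certificate of correctness.

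The main obstacle is therefore not conceptual but one of alignment and calculation: one must verify that Jones's conventions (in particular the treatment of the distinguished parameter variable~$X_0$ and the definition of degree) agree with those fixed in the present paper, and then carry out the large-integer arithmetic reliably. Once the conventions are reconciled, the corollary follows by direct substitution into~$\eta$, and the entire remaining effort is a finite, machine-verifiable computation.
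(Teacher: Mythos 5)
Your proposal follows exactly the paper's route: both displayed degrees are obtained by substituting known universal pairs over $\N$ from Jones~\cite{9var} into $\eta$ from Theorem~\ref{step:FirstUniversalPair}. The specific inputs the paper uses are $(58,4)_\N$ (giving the $\approx 1.68105\cdot 10^{63}$ pair) and $(32,12)_\N$ (giving the $\approx 9.50818\cdot 10^{53}$ pair), so the only thing your sketch leaves open is which two entries of Jones's list to plug in; once those are fixed the remaining work is the mechanical big-integer evaluation you describe.
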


The universal pair $(58,4)$ over $\N$ is known \cite{9var}, and we have $\eta(58,4) \lessapprox 1.68105 \cdot 10^{63}$, which yields the first pair given above.

Jones also mentions the universal pair $(32, 12)_\N$, which yields the second universal pair over $\Z$ stated above, using $\eta(32, 12) \lessapprox 9.50818 \cdot 10^{53}$. Note that while the two universal pairs $(58,4)_\N$ and $(32, 12)_\N$ are not comparable (one has lower degree, while the other has fewer unknowns), they lead to comparable universal pairs over~$\Z$ with 11 unknowns. We state both because the better pair depends on $(32,12)_\N$ for which Jones does not provide a proof. 

It is possible to generalize these results to Diophantine sets of $n$-tuples of parameters $A \subseteq \N^n$. In this case, the degree of the obtained universal pair becomes a function in the number of parameters $n$. We do not focus on multiparameter Diophantine equations in this paper. 

Every universal pair $(\nu,\delta)_\N$ over $\N$ yields trivially a universal pair $(4\nu,2\delta)_\Z$ over $\Z$, replacing each of the $\nu$ variables $X_i\in\N$ by $a_i^2+b_i^2+c_i^2+d_i^2$ with $a_i,b_i,c_i,d_i\in\Z$, using Legendre's Four Squares Theorem. Slightly more efficiently, we can replace $X_i$ by $a_i^2+b_i^2+c_i^2+c_i$ using Lemma~\ref{lemma:threeSquares}, so we obtain a universal pair $(3\nu,2\delta)_\Z$. In view of these simple universal pairs over~$\Z$, our pairs in Corollary~\ref{THM:UniversalPairZ} have many fewer variables but at the expense of gigantic degrees: it is highly non-trivial to reduce the number of variables, and this is the focus of this article.

\subsection{Relation to Hilbert's 10th Problem}
As mentioned earlier, Hilbert's 10th Problem asks for an algorithm to determine whether or not a given Diophantine equation over $\Z$ has a solution. As a consequence of the DPRM theorem, there cannot be a general algorithm for all Diophantine equations. Specific classes of Diophantine equations may well have a solution algorithm.
For example, the solvability of Diophantine equations with one unknown is always decidable.\footnote{The rational root theorem gives a finite list of possible solutions. In particular, to decide solvability in the integers, it suffices to check each factor of the constant term.}
When the number of unknowns is two, an algorithm is known for a special case~\cite{baker}.
Moreover, Diophantine equations of degree two (with an arbitrary number of unknowns) are also decidable~\cite{deg2decidable,deg2decidable2}.
Our results imply the following.

\begin{COROLLARY}[Hilbert's 10th Problem in 11 unknowns]
    Consider the class of Diophantine equations with at most 11 unknowns and at most degree $1.68 \cdot 10^{64}$. Hilbert's Tenth Problem is unsolvable for this class: there is no algorithm that can determine for all equations in this class whether they have a solution or not. 
\end{COROLLARY}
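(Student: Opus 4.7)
The plan is to use a direct Turing reduction from the general undecidability of Hilbert's Tenth Problem to the restricted class. By the DPRM theorem, there exists a recursively enumerable but non-recursive set $\CalA \subseteq \N$; such a set is in particular Diophantine. I would then invoke Corollary~\ref{THM:UniversalPairZ}, which furnishes the universal pair $(11,\, 1.68105 \cdot 10^{63})_\Z$. Since $1.68105 \cdot 10^{63} < 1.68 \cdot 10^{64}$, there exists a representing polynomial $P_\CalA \in \Z[X_0, X_1, \ldots, X_{11}]$ of degree at most $1.68 \cdot 10^{64}$ such that
\[
a \in \CalA \iff \exists\, \mathbf{z} \in \Z^{11} \colon P_\CalA(a, \mathbf{z}) = 0
\quad \text{for all } a \in \N.
\]

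Next, for each $a \in \N$, I would form the substituted polynomial $Q_a(\mathbf{z}) := P_\CalA(a, \mathbf{z}) \in \Z[X_1, \ldots, X_{11}]$. Substituting a constant into one variable can only decrease the total degree, so $Q_a$ is a Diophantine equation in $11$ unknowns with degree at most $1.68 \cdot 10^{64}$; that is, it lies in the class described by the corollary. By construction, $Q_a$ has a solution $\mathbf{z} \in \Z^{11}$ if and only if $a \in \CalA$, and the map $a \mapsto Q_a$ is manifestly computable (it amounts to evaluating the coefficients of $P_\CalA$ at $a$).

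Finally, suppose for contradiction there were an algorithm $\mathcal{A}$ deciding solvability for every Diophantine equation in the stated class. Composing the computable encoding $a \mapsto Q_a$ with $\mathcal{A}$ would yield a decision procedure for membership in $\CalA$, contradicting the non-recursiveness of $\CalA$. This establishes the corollary.

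There is no substantial obstacle here: the argument is a routine reduction, and the only verifications required are the numerical inequality $1.68105 \cdot 10^{63} < 1.68 \cdot 10^{64}$ and the observation that the degree bound of a universal pair applies to \emph{every} Diophantine set (including any fixed undecidable one) and is preserved under substitution of the parameter. All the mathematical work has already been done in Theorem~\ref{step:FirstUniversalPair} and Corollary~\ref{THM:UniversalPairZ}.
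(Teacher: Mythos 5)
Your argument is correct and is exactly the reduction the paper has in mind; the paper states this corollary without proof as an immediate consequence of Corollary~\ref{THM:UniversalPairZ} together with the existence of an r.e.\ non-recursive (hence Diophantine, by DPRM) subset of $\N$. The only points worth noting are the ones you already flag: the numerical check $1.68105\cdot 10^{63} < 1.68\cdot 10^{64}$, the observation that substituting a constant for the parameter cannot increase the total degree, and the computability of $a \mapsto Q_a$ — all routine.
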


\subsection{Structure of the Proof}

Here we describe the main steps in the development of the results and how they are related to the structure of the paper. The task is to describe an arbitrary Diophantine set in terms of a polynomial with a bounded number of variables. This task consists of three main steps, described by Theorems~\ref{step:polynomial-to-binomial}, \ref{step:Sun4}, and \ref{thm:mainthm}. 

In Section~\ref{Sec:Coding}, we introduce \emph{coding techniques} that encode finite vectors of integers into a single (large) natural number. These finite vectors are, in particular, the arguments of a fixed polynomial in several variables.

Section~\ref{Sec:PolyBinom} deals with the first main step (Theorem~\ref{step:polynomial-to-binomial}): we start with a fixed Diophantine set, described by a polynomial equation over $\N$ with any number of unknowns, and turn this into an equivalent divisibility condition on binomial coefficients. The advantage is that this is a single condition, but it is exponentially Diophantine, not Diophantine.

One of the key results in the field was the discovery that exponentially Diophantine equations can be reduced to ordinary Diophantine equations, and one standard method is to use Lucas sequences. The necessary results on Lucas sequences are developed in Section~\ref{Sec:Lucas}. While most of the main results are classical, we need to be careful and do this in a way that allows us to keep track of the degrees and the numbers of variables (and prepare the ground for formalization).

Section~\ref{Sec:BinomDioph} develops the second main step of the proof (Theorem~\ref{step:Sun4}): the divisibility condition on binomial coefficients, as produced in Theorem~\ref{step:polynomial-to-binomial}, is turned into an explicit collection of Diophantine conditions. The number of variables in these equations no longer depends on the degree of the original polynomial. 

Section~\ref{Sec:RelationCombining} on \emph{relation combining} brings together the results of the previous sections. First, we describe an efficient way, following~\cite{MR75}, to combine different conditions in a single equation. Then, we formulate the last main step (Theorem~\ref{thm:mainthm}).

We continue in Section~\ref{Sec:UniversalPairs} by computing explicit universal pairs over $\Z$. In this section, we construct the explicit function $\eta\colon\N\times\N\to\N$, mentioned earlier, which turns a universal pair $(\nu,\delta)_\N$ into a universal pair $(11,\eta(\nu,\delta))_\Z$. 

Finally, we provide an appendix in which we detail the computations of the degrees of the various polynomials that are used throughout the text.

\subsection{The Work of Zhi-Wei Sun}\label{sec:sun}

This paper, and many of its results, are closely related to previous work by Zhi-Wei Sun, and everything rests on many of his results. In particular, our Theorems~\ref{step:polynomial-to-binomial}, \ref{step:Sun4} and \ref{thm:mainthm} are due to him~\cite[Theorems 3.1, 4.1 \& 4.2 and 1.1]{Sun}. Analogues of many of our lemmas can also be found in the work cited above. Mathematically, our main contributions are the determination of the universal pairs, that is, Theorem~\ref{step:FirstUniversalPair} and Corollary~\ref{THM:UniversalPairZ}.

In addition, we have made an effort to formulate the main intermediate results as conceptually as possible. In particular, one would ideally like to have ``if and only if'' statements, for instance in Theorems~\ref{step:polynomial-to-binomial}, \ref{step:Sun4}, and \ref{thm:mainthm}. Although this is not always feasible, we tried to formulate these results as symmetrically as possible.

However, part of our learning experience is that even published theorems are difficult to formalize (or even to understand by non-experts), depending on the level of precision, detail, and rigor provided. Some of us were undergraduates at the time of developing the results, with no prior training in number theory or logic. To understand the proofs and even some of the results so that we could work on their formalization, substantial effort was required on our part to recover everything in sufficient detail.
In this sense, the present article can also be understood as a product of the formalization effort.

The development of these detailed arguments in parallel to the formalization was a most interesting and rewarding activity that we describe in detail in Section~\ref{Sec:Formalization}.

\subsection{Notation}
We follow the convention that $0$ is a natural number, so the set $\mathbb{N} = \{0, 1, 2, \dots\}$. We write $\square$ for the set of square numbers and $n \uparrow$ for the set of powers of $n$. Moreover, we write $[c, d) := \{ x \in \mathbb{Z} \colon c\le x < d \}$ as well as, analogously, $(c, d) := \{ x \in \mathbb{Z} \colon c < x < d \}$. 

Unless explicitly mentioned otherwise, all quantities are integers or integer-valued polynomials. Polynomials are always denoted by capital letters or script capital letters (the only exception being $\bLowercase$). The variables of these polynomials are typically (but not always) lowercase letters. We write vectors and multi-indices in boldface, such as $\mathbf{z} = (z_1, \ldots, z_\nu)$. 

When we write ``$P$ has a solution'' we always mean $\exists\, \mathbf{z}: P(\mathbf{z}) = 0$. Whereas when we write ``$P$ has a solution for the (given) parameter $a \in \N$'' we always mean $\exists\, \mathbf{z}: P(a, \mathbf{z}) = 0$.

\setcounter{mainstep}{0}
\setcounter{THEOREM}{0}
\newpage 

\section{Encoding the Solutions of a Polynomial}
\label{Sec:Coding}
One of the key steps in our arguments will be to consolidate several variables into a single (large) number. Given a polynomial equation $P(z_0,z_1, \dots, z_\nu) = 0$, we define a ``code'' $c$ for the variables as follows: $c := \sum_{i=0}^\nu z_i \B^{n_i}$, where $\B\in\N$ is an appropriate base number and the $n_i>0$ are strictly increasing integers. The goal of this section is to connect the polynomial $P$ to properties of the code $c$. This connection should be suitable to be described by Diophantine conditions. We will describe this connection in terms of carries when adding modulo $2$. 

\begin{definition}[Counting digits and carries] 
\label{def:carry}
For a number $a\in\N$, denote by $\sigma(a)$ the sum of its digits in base $2$ (the number of digits $1$). For $a,b\in\N$ let $\tau(a, b)$ denote the number of carries that occur in the addition of $a$ and $b$ in base $2$. 
\end{definition}

\subsection{Elementary Properties of \texorpdfstring{$\tau$ and $\sigma$}{Digit Functions}}
The following properties will be useful.

\begin{lemma}[Basic properties]
\label{Lem:TauBasic}
The digit count function $\sigma$ and the carry count function $\tau$ have the following properties.
\begin{enumerate}
\item \label{item:SigmaPowerTwo} $\sigma(2^k-1) = k$; 
\item \label{item:closedtausigma} if $a < 2^k$, then $\sigma(a) + \sigma(2^k-1-a) = k$;   
\item \label{item:tausigma} $\tau(a,b) = \sigma(a)+\sigma(b)-\sigma(a+b)$; 
\item \label{item:shifttau} if $N \in 2\uparrow$ and $a < N$, then $\sigma(a+bN) = \sigma(a) + \sigma(b)$ for all $b\in\N$; in particular, $\sigma(bN) = \sigma(b)$; 
\item \label{item:tauaddcombine} if $a, b < 2^k$ with $\tau(a, b) = 0$ and $\tau(c, d) = 0$, then $\tau(a+c \cdot 2^k, b+d \cdot 2^k) = 0$;  
\item \label{item:tauaddsplit} if $a, b < 2^k$ and $\tau(a + c \cdot 2^k, b + d \cdot 2^k) = 0$ then $\tau(a, b) =0$ and $\tau(c, d) = 0$;
\item \label{item:taudiv} $N \in 2\uparrow$, $\tau(N-1, a) = 0 \iff N | a$.
\end{enumerate}
\end{lemma}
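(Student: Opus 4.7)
The plan is to treat all seven items as elementary statements about base-$2$ representations, use item (iii) as the master identity, and recover everything else from bitwise inspection combined with (iii) and (iv).

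For items (i)--(iv), I would proceed directly. Item (i) is just the observation that $2^k - 1$ is the binary string of $k$ ones. Item (ii) follows because, padded to $k$ digits, $2^k - 1 - a$ is the bitwise complement of $a$, so the two digit sums partition $k$ positions into complementary pairs. For (iii), I would run the schoolbook addition column-by-column: letting $c_0 = 0$ and $c_{i+1}, s_i$ satisfy $a_i + b_i + c_i = s_i + 2 c_{i+1}$, summing over all $i \geq 0$ gives $\sigma(a) + \sigma(b) + \tau(a,b) = \sigma(a+b) + 2\tau(a,b)$, which rearranges to the claim. Item (iv) is the observation that $a < 2^k$ occupies only bit positions $0, \ldots, k-1$ while $bN$ occupies positions $\geq k$, so their binary expansions concatenate without overlap.

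For items (v) and (vi), I would combine (iii) and (iv). Item (iii) yields the useful reformulation $\tau(x,y) = 0$ iff $\sigma(x+y) = \sigma(x) + \sigma(y)$, equivalently iff the binary supports of $x$ and $y$ are disjoint. For (v), the hypothesis $\tau(a,b) = 0$ with $a, b < 2^k$ forces $a + b < 2^k$ (else there would be a carry out of position $k-1$), so (iv) lets me expand all four relevant digit sums and conclude $\tau(a + c \cdot 2^k, b + d \cdot 2^k) = \tau(a,b) + \tau(c,d) = 0$. For (vi), I would argue via supports: if the shifted numbers have disjoint bit support, then so do their truncations to positions $0, \ldots, k-1$ (which are $a$ and $b$) and to positions $\geq k$ (which are $c$ and $d$), yielding both conclusions.

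Finally, for (vii), writing $N = 2^k$ I would use that $N - 1$ has bits exactly in positions $0, \ldots, k-1$. By the disjoint-support criterion, $\tau(N-1, a) = 0$ iff the low $k$ bits of $a$ are all zero, which is exactly $N \mid a$. The main obstacle I anticipate is not any single item but maintaining precision in the bitwise viewpoint, particularly for (v) and (vi): the step $a + b < 2^k$ that allows (iv) to be invoked after the hypothesis $\tau(a,b) = 0$ is the most delicate ingredient, and it is also why the shift in (v) and (vi) is assumed to occur exactly at the bound $2^k$ on $a, b$.
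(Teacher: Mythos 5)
Your proposal is correct in full. Note that the paper itself does not spell out a proof of Lemma~\ref{Lem:TauBasic} — it dismisses all seven items as ``elementary and not hard to prove'' — so there is no written argument to compare against. Your column-by-column derivation of the master identity (iii), the reformulation $\tau(x,y)=0 \iff \operatorname{supp}_2(x)\cap\operatorname{supp}_2(y)=\emptyset$, and the observation that $\tau(a,b)=0$ with $a,b<2^k$ forces $a+b<2^k$ (so that (iv) may legitimately be applied in (v)) are exactly the standard elementary arguments one would supply, and each of the seven items is correctly reduced to these ingredients.
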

All these properties are elementary and not hard to prove.

\begin{lemma}[Highest power of two in binomial coefficient]
The highest power of two that divides $\binom{2X}{X}$ has exponent exactly $\sigma(X)$.

Concisely and equivalently, write $\sigma(X)\ge n \iff 2^n | \binom{2X}{X}$.
\label{Lem:BinomDivisibility}
\end{lemma}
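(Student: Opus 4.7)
My plan is to reduce the claim to the classical identity $v_2(\binom{2X}{X}) = \sigma(X)$, where $v_2$ denotes the $2$-adic valuation, and to prove this using the preceding lemma plus Legendre's formula $v_2(n!) = n - \sigma(n)$ (or equivalently Kummer's theorem on binomial coefficients). The equivalence with the stated form $\sigma(X) \ge n \iff 2^n \mid \binom{2X}{X}$ is then immediate from the definition of the $2$-adic valuation.

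First I would establish (or cite) Legendre's formula: for any $n \in \N$, the largest power of $2$ dividing $n!$ has exponent $n - \sigma(n)$. This is standard and follows by induction after observing that among $1, \dots, n$ exactly $\lfloor n/2^k \rfloor$ are divisible by $2^k$, yielding $v_2(n!) = \sum_{k \ge 1} \lfloor n/2^k \rfloor = n - \sigma(n)$ from the base-$2$ digit expansion of $n$.

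Next I would compute directly:
\[
v_2\!\left(\binom{2X}{X}\right) = v_2((2X)!) - 2\, v_2(X!) = \bigl(2X - \sigma(2X)\bigr) - 2\bigl(X - \sigma(X)\bigr) = 2\sigma(X) - \sigma(2X).
\]
Item~\eqref{item:shifttau} of Lemma~\ref{Lem:TauBasic}, applied with $N = 2$, $a = 0$, $b = X$, gives $\sigma(2X) = \sigma(X)$ (this is just the observation that multiplication by $2$ is a left shift in binary). Therefore $v_2(\binom{2X}{X}) = \sigma(X)$, which is exactly what was to be shown. A variant of the same computation uses item~\eqref{item:tausigma}: $\tau(X,X) = \sigma(X) + \sigma(X) - \sigma(2X) = \sigma(X)$, and then one invokes Kummer's theorem that $v_2(\binom{X+X}{X}) = \tau(X,X)$.

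The only real obstacle is that Legendre's formula (or Kummer's theorem) is not listed among the ``basic properties'' just stated, so one of these background facts must be supplied or cited; everything else is a one-line manipulation using the properties of $\sigma$ and $\tau$ already available. For a formalization perspective, Legendre's formula is likely the more convenient route, since it requires only an induction on $n$ and the digit-sum recurrence $\sigma(2n) = \sigma(n)$, both of which are elementary consequences of the definitions used in this section.
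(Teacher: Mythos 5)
Your proof is correct and rests on the same foundation as the paper's: Legendre's formula for $v_2(n!)$. The paper works directly with the sum $\sum_m\bigl(\lfloor 2X/2^m\rfloor-2\lfloor X/2^m\rfloor\bigr)$ and shows each term is the corresponding binary digit of $X$, whereas you first package Legendre's formula into the closed form $v_2(n!)=n-\sigma(n)$ and then simplify $2\sigma(X)-\sigma(2X)$ using $\sigma(2X)=\sigma(X)$; this is a slightly cleaner algebraic route to the same conclusion.
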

\begin{proof}
According to Legendre's formula, the highest power of $2$ that appears in $X!$ has exponent $\lfloor X/2\rfloor+\lfloor X/4\rfloor+\lfloor X/8\rfloor+\dots  $, so the highest power of $2$ in $\binom{2X}{X}$ has exponent
\begin{equation}
\lfloor 2X/2\rfloor+\lfloor 2X/4\rfloor+\lfloor 2X/8\rfloor +\dots - 2\lfloor X/2\rfloor-2\lfloor X/4\rfloor - 2\lfloor X/8\rfloor-\dots
\label{Eq:LegendreFormula}
\end{equation}

The key is that, for every $m\ge 1$, the difference $e_m:=\lfloor 2X/2^m\rfloor-2\lfloor X/2^m\rfloor$ equals the binary digit at position $2^{m-1}$ in $X$: this is obvious for $m=1$, and for $m>1$ the observation follows by replacing $X$ with $X/2^{m-1}$. As such, for every $m\ge 1$, we have $e_m \in \{0,1\}$.

The result follows by adding over all binary digits of $X$, like in the sum \eqref{Eq:LegendreFormula}.
\end{proof}

\subsection{Expressing the Carry Counting Function through a Binomial Coefficient} 
The following lemma shows that the number of carries being zero is equivalent to the divisibility of a binomial coefficient. Note that it has been known since the first proof of DPRM that binomial coefficients are Diophantine; together with the following lemma this means that the condition of $\tau$ being zero can be represented in a Diophantine way. 

\begin{lemma}
\label{lemma:tau-binom} \sunref{\cite[Lemma 2.2]{Sun}}
Let $N \in 2 \uparrow$ and $S, T \in \mathbb{N}$ and suppose that $0 \le S, T < N$. Then for $R := (S + T + 1) N + T + 1$ we have
\[ \tau(S, T) = 0 \iff N^2 \,|\, \binom{2 (N-1)R}{ (N-1) R} 
\;.\]
\end{lemma}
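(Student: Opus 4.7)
The plan is to translate the divisibility condition $N^2 \mid \binom{2(N-1)R}{(N-1)R}$ into a statement about $\sigma((N-1)R)$ via \cref{Lem:BinomDivisibility}, and then compute $\sigma((N-1)R)$ explicitly using the arithmetic properties from \cref{Lem:TauBasic}. Writing $N = 2^k$, \cref{Lem:BinomDivisibility} gives
\[ N^2 \mid \binom{2(N-1)R}{(N-1)R} \iff \sigma((N-1)R) \ge 2k \, , \]
so the whole task reduces to computing $\sigma((N-1)R)$.

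First I would compute $(N-1)R$ algebraically. Setting $U := S+T+1$, one obtains $(N-1)R = UN^2 - SN - (T+1)$. The next step is to rewrite this as a base-$N$ expansion with non-negative digits, by borrowing: add and subtract $N^2$, then add and subtract $N$, which yields the clean identity
\[ (N-1)R = (S+T)\,N^2 + (N-1-S)\,N + (N-1-T) \, . \]
Here $N-1-S$ and $N-1-T$ lie in $[0,N)$ because $S,T < N$; the top coefficient $S+T$ may be as large as $2N-2$ but that is harmless because \cref{Lem:TauBasic}\eqref{item:shifttau} only requires the lower coefficient to be less than $N$.

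Now I apply \cref{Lem:TauBasic}\eqref{item:shifttau} twice, using that $N$ is a power of $2$: first peeling off the lowest digit $N-1-T$, then peeling off $N-1-S$. This gives
\[ \sigma((N-1)R) = \sigma(N-1-T) + \sigma(N-1-S) + \sigma(S+T) \, . \]
By \cref{Lem:TauBasic}\eqref{item:closedtausigma} we have $\sigma(N-1-S) = k - \sigma(S)$ and $\sigma(N-1-T) = k - \sigma(T)$, and by \cref{Lem:TauBasic}\eqref{item:tausigma} we have $\sigma(S+T) = \sigma(S) + \sigma(T) - \tau(S,T)$. Substituting yields the key identity
\[ \sigma((N-1)R) = 2k - \tau(S,T) \, . \]

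The conclusion is immediate: since $\tau(S,T) \ge 0$, the inequality $\sigma((N-1)R) \ge 2k$ holds if and only if $\tau(S,T) = 0$, which completes the equivalence. The main step requiring care is the algebraic manipulation that produces the base-$N$ decomposition of $(N-1)R$; after that, everything is a direct application of the elementary properties of $\sigma$ and $\tau$. I would also briefly check edge cases (e.g.\ $T = N-1$, which makes the lowest digit zero) to confirm the decomposition remains valid throughout.
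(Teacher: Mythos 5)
Your proof is correct and follows essentially the same route as the paper's: both rely on the base-$N$ decomposition $(N-1)R = (S+T)N^2 + (N-1-S)N + (N-1-T)$, then invoke \cref{Lem:TauBasic}~\eqref{item:closedtausigma}, \eqref{item:tausigma}, \eqref{item:shifttau} to derive $\sigma((N-1)R)=2k-\tau(S,T)$, and finally apply \cref{Lem:BinomDivisibility}. The only difference is cosmetic --- you compute $\sigma((N-1)R)$ from scratch, whereas the paper starts from $\tau(S,T)$ and rewrites it --- and your explicit note that the top digit $S+T$ may exceed $N$ but causes no trouble is a worthwhile clarification.
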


\begin{proof}
Write $N=2^k$. We compute
\begin{align*}
\tau(S,T)&\stackrel{(*1)}{=}\sigma(S)+\sigma(T)-\sigma(S+T)
\\
&\stackrel{(*2)}{=}\sigma(N-1)-\sigma(N-1-S)+\sigma(N-1)-\sigma(N-1-S)-\sigma(S+T)
\\
&\stackrel{(*3)}{=} 2\sigma(N-1)-\sigma(N-1-T+N(N-1-S)+N^2(S+T))
\\
&\stackrel{(*4)}{=} 2k-\sigma((N-1)R)
\;.
\end{align*}
Here we used various properties from Lemma~\ref{Lem:TauBasic}: in step (*1), this was property \eqref{item:tausigma}; in step (*2), this was property \eqref{item:closedtausigma} and \eqref{item:SigmaPowerTwo}; in step (*3), we used property \eqref{item:shifttau} twice (shifting the three numbers $N-1-T$, $N-1-S$ and $S+t$ to different positions); and finally, in step (*4), we used property \eqref{item:SigmaPowerTwo} and an elementary calculation. 

We therefore have $\tau(S,T)=0$ if and only if $\sigma((N-1)R)=2k$. If this is not the case, then $\tau(S,T)>0$ and hence $\sigma((N-1)R)<2k$. In other words, $\tau(S,T)=0$ if and only if $\sigma((N-1)R)\ge 2k$. 

The claim now follows directly from Lemma~\ref{Lem:BinomDivisibility}.
\end{proof}

\subsection{Masking} Having established the above elementary properties we can now continue to give a precise definition for the concept of code mentioned at the beginning of this section. 

\begin{definition}[Code and mask]
We say that an integer $c$ is a \emph{code in base $\B$ for the tuple $\mathbf{z} = (z_1, ..., z_\nu)$} with respect to a tuple $\mathbf{n} = (n_1,\dots,n_\nu)$ of strictly increasing numbers if
\begin{equation}\label{eq:codeDef}
 c= \code(\mathbf{z},\B,\mathbf{n}) := \sum_{i=1}^\nu z_i \B^{n_i}
\;.
\end{equation}

In this situation, for integer $b < \B$, we define
\[ M = \mask(b, \B , \mathbf{n}) := \sum_{i=0}^{n_\nu} m_i \B^i \]
where $ m_i := \begin{cases}
\B-b & \text{if } i = n_j \text{ for some } j \in\{ 1, \ldots, \nu\}; \\
\B-1 & \text{otherwise}.
\end{cases} $
 \label{def:mask}
\end{definition}

The concept of mask will make it possible to tell whether or not a given number $c$ is a code. It is intuitive (and useful) when $b$ and $\B$ are powers of $2$. If $\B=2^K$ and $b=2^k$, then each $m_i$ consists of $K$ digits in base $2$: in the second case, $m_i$ is a string of $K$ digits $1$, while in the first case, $m_i$ is first a string of $K-k$ digits $1$, then $k$ digits $0$, so that $m_i$ is divisible by $b$. 

A key property is that $\tau(g,M)=0$ if and only if, in base $2$, at all positions where $M$ has a $1$, necessarily $g$ has a $0$. In this sense, the number $M$ serves as a ``mask'' for $g$, forcing the non-zero binary digits of $g$ to positions where $M$ has a zero. The following lemma makes the relationship between code and mask precise. 

\Newpage

\begin{lemma}[Masking] \label{lemma:code-mask} \sunref{\cite[Lemma 2.3]{Sun}} 
Let $b, \B \in \N$ be powers of $2$ with $b \le \B$ and $\B \geq 2$. Moreover, let a tuple $\mathbf{n} = (n_1,\dots,n_\nu)$ of strictly increasing numbers be given. Define $M = \mask(b, \B, \mathbf{n})$.

Then for any integer $g\ge 0$, 
\begin{equation}
g<\B^{n_\nu+1} \text{ and } \tau(g, M) = 0
\label{Eq:Code2}
\end{equation}
if and only if 
\begin{equation}
\exists z_1	, \ldots, z_\nu \in [0, b) \colon  \text{$g$ is a code in base }\B \text{ for } (z_1, ..., z_\nu) \text{ with respect to $\mathbf{n}$}.
\label{Eq:Code1}
\end{equation}
\end{lemma}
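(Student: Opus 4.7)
The plan is to reduce everything to an analysis of binary digits. Writing $\B = 2^K$ and $b = 2^k$ with $0 \le k \le K$, I would first unfold $M$ in binary: the contribution of each coefficient $m_i$ occupies exactly $K$ consecutive bit positions, and a short case split shows that the $0$-bits of $M$ within the range $[0, K(n_\nu+1))$ are precisely the $\nu$ blocks $[n_j K,\, n_j K + k)$ for $j = 1, \ldots, \nu$; all remaining bit positions in this range carry $1$-bits.

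The crucial bridge is that $\tau(g, M) = 0$ is equivalent to $g$ and $M$ having no common $1$-bit in binary. This follows from Lemma~\ref{Lem:TauBasic}\eqref{item:tausigma} together with the standard observation that $\sigma(g + M) \le \sigma(g) + \sigma(M)$, with equality exactly when the binary supports of $g$ and $M$ are disjoint.

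Given these two preparations, both directions are essentially bookkeeping. For the forward direction, I would assume $g < \B^{n_\nu+1} = 2^{K(n_\nu+1)}$ and $\tau(g, M) = 0$: the bound confines the $1$-bits of $g$ to positions below $K(n_\nu+1)$, and the zero-carry condition confines them further to the $0$-blocks of $M$ identified above. Setting $z_j$ to be the integer read off from the $j$-th such block of $g$, one obtains $z_j \in [0, 2^k) = [0, b)$ and $g = \sum_{j=1}^\nu z_j \B^{n_j} = \code((z_1, \ldots, z_\nu), \B, \mathbf{n})$. For the converse, starting from a code $g$ with each $z_j \in [0, b)$, the binary $1$-bits of $g$ lie exactly in the blocks $[n_j K,\, n_j K + k)$, which are precisely the $0$-blocks of $M$, so $\tau(g, M) = 0$; and since the base-$\B$ digits of $g$ are nonzero only at positions $\le n_\nu$ and each is strictly less than $\B$, one also has $g < \B^{n_\nu+1}$.

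I expect the main obstacle to be the bookkeeping between base-$\B$ and base-$2$ representations: one must check carefully that the binary layout of $M$ really matches the description above, and then align it correctly with that of $g$. An inductive alternative, peeling off one summand $z_j \B^{n_j}$ at a time via items~\eqref{item:tauaddcombine} and~\eqref{item:tauaddsplit} of Lemma~\ref{Lem:TauBasic}, is also available and might be preferable for a clean formalization, though the direct digit-by-digit analysis seems more transparent on paper.
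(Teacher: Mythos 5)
Your proposal follows essentially the same route as the paper's proof: both write $\B = 2^K$, decompose $M$ block-by-block in binary, and use the fact that $\tau(g,M)=0$ forces the $1$-bits of $g$ into the $0$-blocks $[n_jK,\,n_jK+k)$ of $M$, while $g < \B^{n_\nu+1}$ confines $g$ to the range covered by the mask. You spell out a few steps the paper leaves implicit (the disjoint-binary-support characterization of $\tau=0$, and the explicit reading-off of the $z_j$ from the blocks), but the argument is the same.
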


\begin{proof}
The key condition is $\tau(g,M)=0$. If $\B=2^K$, the mask $M$, written in base 2, consists of blocks of length $K$. If a given $i$ is different from all $n_j$, then $m_i=2^K-1$, so all digits in this block are equal to $1$, so $\tau(g,M)$ requires that all digits of $g$ in this block are zero. 

In the other case, we have $m_i=\B-b$, so we need that the digits of $g$ within this block form a binary number less than $b$. 

The condition $g<\B^{n_\nu+1}$ assures that $g$ does not have binary digits at large positions that are not checked by the mask $M$. 

Therefore, for a given mask $M$ and tuple $\mathbf{n}$, we have $\tau(g,M)=0$ if and only if we can write $g=\sum_{i=1}^\nu z_j\B^j$ where all $z_j\in[0,b)$, and $z_j=0$ except when $j=n_i$ for some $i$. 
\end{proof}

\subsection{Encoding Polynomial Solutions}
We can now say when a given number is a code $c$. It remains to relate a code $c$ to the solutions of a given polynomial $P$. To this end, we will re-code (the coefficients of) the multivariate polynomial $P$ into a univariate polynomial.

\begin{definition}[Coefficient-encoding and Value Polynomials]
\label{Def:ConsolidatingPoly}
Consider a multivariate polynomial
\begin{align}
    P(X_0, \dots, X_\nu) = \sum_{\substack{\mathbf{i} \in \N^{\nu+1} \\ \|\mathbf{i}\| \le \delta}} a_{\mathbf{i}} \, \mathbf{X}^{\mathbf{i}}
\label{Eq:MultiPolynomial}
\end{align}
in $\nu+1$ variables with integer coefficients $a_{\mathbf{i}}$, and of degree $\delta$. Here $\mathbf{X}=(X_0, ..., X_\nu)$ is the tuple of variables and $\mathbf{i}=(i_0,\dots,i_\nu)$ is a multi-index.

Then we define the \emph{coefficient-encoding polynomial of $P$} as follows:
\begin{align}                                
\coeffs(Y) := \sum_{\substack{\mathbf{i} \in \mathbb{N}^{\nu+1} \\ \|\mathbf{i}\| \leq \delta}} \mathbf{i} ! (\delta - \|\mathbf{i}\|)! a_{\mathbf{i}} Y^{(\delta+1)^{\nu+1} - \sum_{s=0}^\nu i_s (\delta+1)^s }
\;.
\label{Def:coeffs}
\end{align}
Based on this, we define the \emph{value polynomial}
\begin{equation}
\values(x,\B) := x^\delta \coeffs(\B) +  \sum_{j=0}^{( 2\delta+1)(\delta+1)^\nu} \frac{\B}{2} \B^{j}
\;.
\label{eq:value}
\end{equation}
\end{definition}

We use multi-indices $\mathbf{i}, \mathbf{j} \in \N^{\nu + 1}$ as follows: $\|\mathbf{i}\|=\sum_s i_s$, $\mathbf{i}!=\prod_s i_s!$, and $\mathbf{X}^{\mathbf{i}}=\prod_s  X_s^{i_s}$. Then the multinomial coefficient is $\binom{\delta}{\mathbf{i}} := \frac{\delta !}{\mathbf{i}!(\delta-\|\mathbf{i}\|)!}$ and the scalar is product is $\mathbf{i}\cdot\mathbf{j}=\sum_s i_s j_s$.
We will use $\values(x,\B)$ only when $\B$ is even, so all coefficients are integers.

The role of the coefficient-encoding polynomial is to place the coefficients of $P$ in specific positions of the base $\B$ expansion of $\coeffs(\B)$. This ensures that when $\coeffs(\B)$ is multiplied with $(1+c)^\delta$ where $c = \code(\mathbf{z}, \B, \mathbf{n})$, one obtains a number whose leading digit in base $\B$ corresponds to the evaluation $P(z_0, \ldots, z_\nu)$.    
This relationship is made precise in the next lemma. 

\begin{lemma} \label{lemma:polynomial-zero-tau-zero} \sunref{\cite[Lemma 2.4]{Sun}}
Let $P$ be a polynomial as in \eqref{Eq:MultiPolynomial} with degree $\delta$, let $\mathbf{z} = (z_0, \ldots, z_\nu)$ be a tuple of natural numbers, and $\mathbf{n}=(n_0,\dots,n_\nu)$ with $n_j = (\delta + 1)^j$ for all $j$.

Let $\mathcal L$ be an integer such that $\mathcal L \ge |a_{\mathbf{i}}|$ for all $\mathbf{i} \in \N^{\nu+1}$, and let $\B \in 2 \uparrow$ such that
$ \B > 2\delta! \, \mathcal L (1 + z_0+\dots+z_\nu)^\delta$.  
Define 
\begin{align}
\K & := \values(1+\code(\mathbf{z},\B,\mathbf{n}),\B)
\;. \label{eq:code-K-def}
\end{align}

Then we have
\begin{equation}
P(\mathbf{z}) = 0 \iff \tau\left(\K, (\B/2-1)\B^{n_{\nu+1}}\right) = 0.
\end{equation}
\end{lemma}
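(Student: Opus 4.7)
The plan is to compute the base-$\B$ expansion of $\K$ directly, show that the digit at position $n_{\nu+1} = (\delta+1)^{\nu+1}$ equals $\B/2 + \delta!\, P(\mathbf z)$, and then translate the $\tau$-condition into a divisibility constraint on that single digit. Expanding by the multinomial theorem,
\[ (1+c)^\delta = \sum_{\mathbf{i}} \binom{\delta}{\mathbf{i}} \mathbf{z}^\mathbf{i} \, \B^{\alpha(\mathbf{i})} \quad \text{with} \quad \alpha(\mathbf{i}) := \sum_s i_s (\delta+1)^s, \]
and $\coeffs(\B) = \sum_\mathbf{j} \mathbf{j}!(\delta-\|\mathbf{j}\|)!\, a_\mathbf{j} \, \B^{n_{\nu+1} - \alpha(\mathbf{j})}$. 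Because $0 \le i_s, j_s \le \delta$, the map $\mathbf{i}\mapsto\alpha(\mathbf{i})$ is injective by the uniqueness of base-$(\delta+1)$ representations. Collecting the product into $(1+c)^\delta\coeffs(\B) = \sum_e C_e \B^e$, one checks that $e = n_{\nu+1} + \alpha(\mathbf{i}) - \alpha(\mathbf{j}) \in [(\delta+1)^\nu, (2\delta+1)(\delta+1)^\nu] = [(\delta+1)^\nu, J]$, matching exactly the range over which $\values$ pads by $\B/2$.

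The key estimate is $|C_e| < \B/2$ for every $e$. For fixed $e$, each $\mathbf{i}$ determines $\alpha(\mathbf{j}) = \alpha(\mathbf{i}) + n_{\nu+1} - e$ and hence, by the injectivity above, a unique $\mathbf{j}$ whenever a contribution exists. Therefore
\[ |C_e| \le \sum_\mathbf{i} \binom{\delta}{\mathbf{i}} \mathbf{z}^\mathbf{i} \cdot \delta!\, \mathcal L = \delta!\, \mathcal L\, (1+z_0+\cdots+z_\nu)^\delta < \B/2, \]
which is precisely the hypothesis on $\B$. Consequently $\K = \sum_{j=0}^J (C_j + \B/2)\B^j$ with every coefficient $C_j + \B/2 \in (0, \B)$: these are literally the base-$\B$ digits of $\K$ and no carries occur. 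A second use of injectivity, now applied to $\mathbf{i}-\mathbf{j}$ with entries in $[-\delta,\delta]$, shows that $e = n_{\nu+1}$ forces $\mathbf{i} = \mathbf{j}$, so
\[ C_{n_{\nu+1}} = \sum_\mathbf{i} \binom{\delta}{\mathbf{i}} \mathbf{i}!(\delta - \|\mathbf{i}\|)!\, a_\mathbf{i}\, \mathbf{z}^\mathbf{i} = \sum_\mathbf{i} \delta!\, a_\mathbf{i}\, \mathbf{z}^\mathbf{i} = \delta!\, P(\mathbf{z}). \]

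Finally, write $\B = 2^K$. The mask $(\B/2-1)\B^{n_{\nu+1}}$ has, in binary, ones exactly in the $K-1$ low bits of the base-$\B$ digit at position $n_{\nu+1}$, so $\tau(\K, (\B/2-1)\B^{n_{\nu+1}}) = 0$ is equivalent to saying that this digit is divisible by $\B/2$, i.e., equals $0$ or $\B/2$. Since the digit is $\B/2 + \delta!\, P(\mathbf{z})$ with $|\delta!\, P(\mathbf{z})| < \B/2$, the value $0$ is excluded and equality with $\B/2$ holds if and only if $P(\mathbf{z}) = 0$. \textbf{The main obstacle} is controlling $|C_e|$: without the injectivity of $\mathbf{i}\mapsto\alpha(\mathbf{i})$ on the finite multi-index set, the bound on each coefficient would inherit a combinatorial collision factor, and the carry-suppression provided by the $\B/2$-padding would break down, preventing any clean identification of the central digit.
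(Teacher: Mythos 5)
Your proposal takes essentially the same route as the paper: expand $(1+c)^\delta\coeffs(\B)$ by the multinomial theorem, identify the coefficient of $\B^{n_{\nu+1}}$ as $\delta!\,P(\mathbf z)$ using uniqueness of base-$(\delta+1)$ expansions (forcing $\mathbf i=\mathbf j$), bound every other coefficient by $\delta!\,\L\,(1+\|\mathbf z\|)^\delta<\B/2$ so that the $\B/2$-padding places all base-$\B$ digits of $\K$ in $(0,\B)$, and finally read off the $\tau$-condition as divisibility of the single digit at position $n_{\nu+1}$ by $\B/2$. If anything your write-up is somewhat more explicit than the paper's about why the injectivity of $\mathbf i\mapsto\alpha(\mathbf i)$ prevents combinatorial collisions in the coefficient sum (the paper indexes the coefficients by a difference multi-index $\mathbf k$ and silently uses that, for fixed $e$ and $\mathbf i$, at most one $\mathbf j$ contributes), but the underlying argument is the same.
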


\begin{proof}
The key to the proof is again the carry counting property $\tau = 0$. The number 
$(\B/2-1)\B^{n_{\nu+1}}$ works as a mask: in base $\B$, it has (from right to left) $n_{\nu+1}$ digits $0$ and then a single digit $\B/2-1$. We thus have $\tau=0$ if and only if $\K$, in base $\B$, has at position $n_{\nu+1}$ either a digit $0$ or $\B/2$. 

By definition \eqref{eq:code-K-def}, the first summand in $\K$ is a product of two factors that we write $K'\cdot K''$ for short. Expressing these in base $\B$, we have $K'=\sum k'_j\B^j$ and similarly for $K''$; then the $\B^{n_{\nu+1}}$ term in $\K$ has the coefficient
\[
\sum_j k'_j k''_{n_{\nu+1}-j}
\;.
\]
We have to check only this coefficient. 

The second summand in the definition of $\K$ takes care of the fact that the coefficients $a_{\mathbf{i}}$ can be negative. So far, the base $\B$ expansion given by expanding equations~\eqref{Def:coeffs} and~\eqref{eq:value} may have positive or negative coefficients, but less than $\B/2$ in absolute values. Adding terms $(\B/2)\B^i$ makes sure that the base $\B$ expansion of $\K$ in its final form only has coefficients in $[0,\B)$ as required. 

In fact, the lower bound on $\B$ assures that no coefficient is zero, so what we need to check is whether it equals $\B/2$; or equivalently whether the $\B^{n_{\nu+1}}$ term in the first summand of $\K$ equals zero.

The first factor in the definition of $\K$ involves powers of $\mathbf{z}$, while the second factor involves the coefficients $a_{\mathbf{i}}$. The value of the coefficient in question is a single sum, and it turns out (and we have to verify) that it equals exactly $P(\mathbf{z})$, up to a non-zero factor.

Using the binomial theorem, we have
\[(1 + \code(\mathbf{z}, \B, \mathbf{n}))^\delta = \sum_{\|\mathbf{i}\| \leq \delta} \binom{\delta}{\mathbf{i}} \,\mathbf{z}^{\mathbf{i}} \, \B^{\mathbf{i}\cdot \mathbf{n}}
\;.
\]

Therefore
\begin{equation}
(1 + \code(\mathbf{z},\B , \mathbf{n}))^\delta \coeffs(\B) 
= \sum_{\substack{\|\mathbf{i}\| \leq \delta \\ \|\mathbf{j}\| \leq \delta }} \binom{\delta}{\mathbf{i}} \, \mathbf{j} ! \, (\delta - \|\mathbf{j}\|)! \, a_{\mathbf{j}}\, \mathbf{z}^{\mathbf{i}} \B^{n_{\nu+1}+\mathbf{i} \cdot \mathbf{n}-\mathbf{j} \cdot \mathbf{n}}
\;.
\label{Eq:Multinomial}
\end{equation}

Let us notice that the monomial $\B^{n_{\nu+1}}$ appears exactly when $\mathbf{i}\cdot \mathbf{n} - \mathbf{j} \cdot \mathbf{n}=(\mathbf{i}-\mathbf{j})\cdot \mathbf{n}=0$, hence $\mathbf{i} = \mathbf{j}$. 
In this case,  we have $\binom{\delta}{\mathbf{i}}\,\mathbf{j}! \, (\delta-\|\mathbf{j}\|)!=\delta!$, so the coefficient of $\B^{n_{\nu+1}}$ equals 
\[
\sum_{\|i\| \leq \delta} \delta!\,a_{\mathbf{i}}\, \mathbf{z}^{\mathbf{i}} = \delta! \,P(z_0,...,z_\nu)
\;.
\]

This completes the conceptual proof of the lemma, but we still have to check that all coefficients of $\K$ are in $(0,\B)$. 

Any such coefficient of $\K$ differs from the analogous coefficient in  \eqref{Eq:Multinomial} by the shift of $\B/2$. Consider one of those coefficients, corresponding to the exponent $n_{\nu+1} + \mathbf{k} \cdot \mathbf{n}$. The double sum reduces to those $\mathbf{i}$ and $\mathbf{j}$ with $\mathbf{i} - \mathbf{j}=\mathbf{k}$. Since $|a_{\mathbf{j}}|\le \L$, the absolute value of the coefficient is at most
\[
\delta! \L  \,\sum_{\|\mathbf{i}\|\le\delta} \frac{(\mathbf{k}-\mathbf{i})! (\delta-\|\mathbf{k}-\mathbf{i}\|)!}{\mathbf{i}!(\delta-\|\mathbf{i}\|)!} \mathbf{z}^{\mathbf{i}}
\le 
\delta! \L  \,\sum_{\|\mathbf{i}\|\le\delta} \frac{\delta!}{\mathbf{i}!(\delta-\|\mathbf{i}\|)!} \mathbf{z}^{\mathbf{i}}
=\delta!\L (1+\|\mathbf{z}\|)^\delta <\B/2
\;,
\]
using the multinomial expansion of $(1+\|\mathbf{z}\|)^\delta$.
\end{proof}

We just obtained a way to check whether the tuple $\mathbf{z}$ represented by a code $c(\mathbf{z}, \B)$ is a solution to the equation $P(\mathbf{z}) = 0$ with a $\tau$-condition on $c$. In the next section, we show how to merge this condition and the masking condition together. 

Let us also notice that the variable $z_0$ will play a special role in things to come. Indeed, while $z_1$ through $z_\nu$ will be the unknowns, encoded by a non-negative integer $g$, $z_0$ will be the parameter $a$, which will (or not) be in the set $\mathcal A$.

As a final remark, we note that the coding techniques developed in this and the next section are amenable to generalization to a general prime base $p$, not just base $2$. This has previously been elaborated by Sun~\cite{Sun}, however is not relevant in the scope of this paper. 

\newpage

\section{From a Polynomial Representation to a Binomial Coefficient}
\label{Sec:PolyBinom}
The goal of this section is to combine Lemmas~\ref{lemma:tau-binom}, \ref{lemma:code-mask} and \ref{lemma:polynomial-zero-tau-zero} from the previous section into the technical Theorem~\ref{step:polynomial-to-binomial} which can later be used to apply the coding ideas all at once. Specifically, the two separate $\tau$ conditions in Lemma~\ref{lemma:code-mask} and Lemma~\ref{lemma:polynomial-zero-tau-zero} can be combined into a single condition by using that
\[
\tau(a, b) =0 \text{ and } \tau(c, d) = 0 \quad \text{ if and only if } \quad
\tau(a + c2^k, b + d2^k) = 0
\]
which holds whenever $a, b < 2^k$ (this is Lemma~\ref{Lem:TauBasic}, item~\eqref{item:tauaddsplit}). One can then continue to apply Lemma~\ref{lemma:tau-binom} to further transform the joint $\tau$ condition into a divisibility of a binomial coefficient.

Note that this process frequently requires checking the various inequalities that appear in the statements of the aforementioned lemmas. Therefore, the main technical challenge of this section lies in defining all involved quantities in a way that allows us to deduce appropriate bounds when needed.

This section starts with a comprehensive definition of all quantities involved in the proof of its main theorem. We then continue to establish lower and upper bounds on these quantities. Finally, everything will be set up to state and prove Theorem~\ref{step:polynomial-to-binomial}.

In the following definition, note that the parameter $a$, the first variable of the polynomial $P$, is not related to the coefficients of $P$ denoted by $a_{i_0, \ldots, i_\nu}$.

\begin{definition}[Various polynomials] \label{def:codingVariables}
Consider a polynomial $P=P(a, z_1, \ldots, z_\nu)$ and denote its coefficients by  $a_{i_0, \ldots, i_\nu}$ and its degree by $\delta$. Set

\[ \mathcal{L} := \sum_{\substack{\mathbf{i} \in \mathbb{N}^{\nu+1} \\ |\mathbf{i}| \le \delta}} |a_{\mathbf{i}}| \;.
\]
Let $r \in \N$ be minimal such that
\begin{equation}
\beta:=4^r >2\delta!\L (\nu + 2)^\delta \;.
\label{Eq:DefBeta}
\end{equation}
Then define the constants
\begin{align*}
\alpha &:=\delta (\delta +1)^\nu +1  \quad \text{ and } \\
\gamma &:=\beta^{(\delta +1)^\nu}
\end{align*}
and set $n_j = (\delta + 1)^j$. We define the following polynomials in the variables $a, f, g$ (we omit arguments on the right hand side for better readability).
\begin{subequations}
\begin{align}
\bLowercase(a, f) &:= 1 + 3(2a + 1)f  \label{Def:b} \\
\B (a, f) &:= \beta \, \bLowercase^\delta \\
M(a, f) &:= \mask(\bLowercase, \B, \mathbf{n}) \\
N_0(a, f) &:= \B^{(\delta+1)^\nu + 1} \\
N_1(a, f) &:= 4\B^{(2 \delta + 1)(\delta + 1)^\nu + 1} \\
N(a, f) &:= N_0 N_1 \\
c(a, f, g) &:= 1 + a \B + g \\
\K(a, f, g) &:= \values(c,\B)
\\
\S(a, f, g) &:= g + 2\K N_0 \\
\T(a, f) &:= M + (\B - 2)\B^{(\delta+1)^{\nu + 1}} N_0 \\
\RR(a, f, g) &:= (\S + \T + 1)N + \T + 1 \\
\X(a, f, g) &:= (N-1)\RR \\
\Y(a, f) &:= N^2
\end{align}
\end{subequations}
\end{definition}

\begin{lemma} \label{lemma:arbitraryF}
Let $a \in \N$. For all $Z > 0$ there is $f \ge Z$ such that
\begin{equation*}
 \bLowercase(a, f) \in \square \quad \text{and} \quad \bLowercase(a, f) \in 2 \uparrow.
\end{equation*}

\end{lemma}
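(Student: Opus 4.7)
The plan is to exhibit $f$ explicitly. Since any power of $2$ that is also a perfect square must have an even exponent, the conditions $\bLowercase(a, f) \in \square$ and $\bLowercase(a, f) \in 2\uparrow$ together are equivalent to the single requirement
\[
1 + 3(2a+1)f = 4^m \quad\text{for some } m \in \N.
\]
Equivalently, I need to find $m$ such that $3(2a+1) \mid 4^m - 1$, and then put $f := (4^m - 1)/(3(2a+1))$.

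The existence of such $m$ (in fact, of infinitely many) follows from elementary group theory: the modulus $3(2a+1)$ is odd, hence coprime to $4$, so $4$ has a well-defined multiplicative order $D := \operatorname{ord}_{3(2a+1)}(4)$ in $(\Z/3(2a+1)\Z)^*$. For every positive multiple $m = Dt$ of $D$, we have $4^m \equiv 1 \pmod{3(2a+1)}$, and hence the integer
\[
f_t := \frac{4^{Dt} - 1}{3(2a+1)}
\]
is a nonnegative integer making $\bLowercase(a, f_t) = 4^{Dt}$ a perfect square power of $2$.

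Finally, the sequence $f_t$ grows without bound as $t \to \infty$ (since $4^{Dt}$ does and $3(2a+1)$ is a fixed positive constant). Thus, given any threshold $Z > 0$, I can choose $t$ large enough that $f_t \geq Z$, completing the proof. The argument is entirely constructive and I do not foresee a significant obstacle; the only subtlety to address cleanly in the write-up is confirming that $3(2a+1)$ is indeed coprime to $4$, which is immediate since $2a+1$ is odd and so is $3$.
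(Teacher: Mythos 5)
Your proof is correct and follows essentially the same route as the paper: both observe that $\bLowercase(a,f)$ should be a power of $4$, use the coprimality of $4$ with $3(2a+1)$ to produce infinitely many exponents $m$ with $4^m \equiv 1 \pmod{3(2a+1)}$, and conclude since the resulting $f$ grows without bound. The only cosmetic difference is that you invoke the multiplicative order of $4$ where the paper uses Euler's theorem with $\phi(3(2a+1))$.
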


\begin{proof}
Since $4$ and $3(2a+1)$ are coprime, Fermat's Little Theorem implies
\[
4^{\phi(3(2a+1))} \equiv 1 \pmod{3(2a+1)}
\]
and hence for all $m >0$
\[
4^{m\phi(3(2a+1))} \equiv 1 \pmod{3(2a+1)}
\;.
\]
The numbers on the left are all powers of $4$, hence squares and powers of $2$. They have the form $1+3(2a+1)f$ for $f\in\N$. Since they become arbitrarily large, the claim follows.
\end{proof}


\subsection{Lower Bounds}

\begin{definition}[Suitable for coding]
We call a polynomial $P(a, z_1, \ldots, z_\nu)$ with variables in $\N$ \emph{\virtuallypositive} if it satisfies the following conditions:

\begin{enumerate}
  \item \label{item:exists-positive-zi} for every solution $(a, z_1, \ldots, z_\nu)$ of $P$ there exists a solution $(a, z_1', \ldots, z_\nu')$ such that at least one of the $z_i'$ is positive;
  \item \label{item:a0-positive} the constant coefficient is positive;
  \item the degree of $P$ is positive.
\end{enumerate}
\end{definition}

\begin{lemma} \label{lemma:p-suitable-for-coding}
Let the polynomial $P \in \Z[a, z_1, \ldots, z_\nu]$ be given. There exists a polynomial $\overline{P} \in \Z[a, z_1, \ldots, z_{\nu+1}]$ that is suitable for coding and such that $\overline{P}$ has a solution for the parameter $a$ if and only $P$ has a solution for $a$. Moreover, $\overline{P}$ can be given as
\[ \overline{P}(a, z_1, \ldots, z_{\nu+1}) = P(a, z_1, \ldots, z_\nu)^2 + (z_{\nu+1} - 1)^2 \; . \]
\end{lemma}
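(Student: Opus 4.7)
The plan is to verify directly that the polynomial $\overline{P}$ defined in the statement satisfies all requirements: it has a solution at a given parameter $a$ precisely when $P$ does, and it fulfills all three conditions of being \virtuallypositive.

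First I would check the solution equivalence. Since $P(a, \mathbf{z})^2$ and $(z_{\nu+1}-1)^2$ are both non-negative integers when evaluated at integer arguments, we have $\overline{P}(a, z_1, \ldots, z_{\nu+1}) = 0$ if and only if both summands vanish, i.e. $P(a, z_1, \ldots, z_\nu) = 0$ and $z_{\nu+1} = 1$. Hence if $(z_1, \ldots, z_\nu)$ is a solution of $P$ at parameter $a$, then $(z_1, \ldots, z_\nu, 1)$ is a solution of $\overline{P}$; conversely, any solution of $\overline{P}$ truncates to a solution of $P$. This handles the equivalence of solvability.

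Next I would verify the three conditions of Definition \virtuallypositive{} one by one. For condition~\eqref{item:exists-positive-zi}: given any solution $(a, z_1, \ldots, z_{\nu+1})$ of $\overline{P}$, the previous paragraph forces $z_{\nu+1} = 1 > 0$, so the solution itself already has a positive coordinate and one can take $(z_1', \ldots, z_{\nu+1}') = (z_1, \ldots, z_{\nu+1})$. For condition~\eqref{item:a0-positive}, the constant coefficient of $\overline{P}$ is obtained by evaluating at all variables zero, giving $\overline{P}(0, \ldots, 0) = P(0, \ldots, 0)^2 + (0-1)^2 = P(0, \ldots, 0)^2 + 1 \ge 1 > 0$. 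For the final condition, the summand $(z_{\nu+1} - 1)^2$ already contributes a monomial $z_{\nu+1}^2$ of degree $2$, so $\deg \overline{P} \ge 2 > 0$ regardless of what $P$ is (even if $P$ is a nonzero constant).

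I do not expect a real obstacle here: the entire argument is a routine direct check, and the content is really in the clever choice of the auxiliary polynomial $(z_{\nu+1} - 1)^2$, which simultaneously forces a fixed positive value for the new variable (securing condition~\eqref{item:exists-positive-zi}) while bumping up the constant coefficient and the degree (securing conditions~\eqref{item:a0-positive} and~(iii)). The only thing worth flagging is that one must rely on the sum-of-squares trick being valid over $\Z$, which requires working inside $\N$ (or at least within integers where squares are non-negative); since the statement of the lemma is about solutions $\mathbf{z} \in \N^{\nu+1}$ as in the surrounding setup, this is exactly where we are.
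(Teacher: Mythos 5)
Your proof is correct and follows essentially the same route as the paper: establish that a sum of two integer squares vanishes iff both summands do (forcing $z_{\nu+1}=1$), and check the three conditions of being suitable for coding directly. You spell out the constant-coefficient and degree checks which the paper leaves to the reader, but there is no substantive difference in approach.
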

\begin{proof}
We start with showing that $\overline{P}$ is suitable for coding. First, note that if $\overline{P}(a, z_1, \ldots, z_{\nu+1}) = 0$ then in particular $z_{\nu + 1} = 1$; fulfilling condition \eqref{item:exists-positive-zi}. One easily calculates the constant coefficient and degree of $\overline{P}$ to verify the remaining two conditions.

It is left to show that the (parametric) solutions of $\overline{P}$ are exactly those of $P$. Fix $a \in A$. We can show that
\[ \begin{split}
& \exists z_1, \ldots, z_\nu \in \N: P(a, z_1, \ldots, z_\nu) = 0 \\
\iff & \exists z_1, \ldots, z_\nu \in \N: P(a, z_1, \ldots, z_\nu)^2 + (1-1)^2 = 0 \\
\iff & \exists z_1, \ldots, z_\nu, z_{\nu+1} \in \N: P(a, z_1, \ldots, z_\nu)^2 + (z_{\nu+1}-1)^2 = 0 \; . \\
 \end{split} \]
Notice that the expression in the last line is exactly $\overline{P}$.
\end{proof}

\begin{lemma}[Lower bounds on the polynomials]
\label{lemma:coding-lower-bounds}
Suppose $P$ is \virtuallypositive{} and $a\ge 0$, $f>0$, and $g\ge 0$. Then the polynomials in Definition~\ref{def:codingVariables} satisfy the following bounds:
    \begin{align*}
    \bLowercase(a, f) &> 2 \\
    \B (a, f) &\geq 2 \\
    N_0 (a, f) &\geq 2 \\
    N_1 (a, f) &\geq 2^3 \\
    N (a, f) &\geq 2^4 \\
    \S (a, f, g) &\geq 0 \\
    \T (a, f, g) &\geq 0 \\
    \RR (a, f, g) &> 0
\;.
    \end{align*}
\end{lemma}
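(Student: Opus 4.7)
The plan is to walk through the bounds in the order listed, since each later bound leans on the earlier ones. The first five bounds (on $\bLowercase$, $\B$, $N_0$, $N_1$, $N$) are a cascade of direct computations, and the last three ($\S, \T, \RR$) are short consequences of the first five together with one key positivity observation about $\K$.

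For the first block, I would unpack the definitions in sequence. From $a \geq 0$ and $f \geq 1$, the formula $\bLowercase = 1 + 3(2a+1)f$ gives $\bLowercase \geq 4 > 2$ at once. For $\B = \beta\,\bLowercase^\delta$, suitability for coding forces $\delta \geq 1$ and constant coefficient $\geq 1$, so $\L \geq 1$, hence $2\delta!\L(\nu+2)^\delta \geq 4$. Minimality of $r$ with $4^r > 4$ yields $r \geq 2$, so $\beta \geq 16$ and $\B$ is in fact enormous; in particular $\B \geq 2$. Then $N_0 = \B^{(\delta+1)^\nu + 1} \geq 2^2$, $N_1 = 4\B^{(2\delta+1)(\delta+1)^\nu + 1} \geq 2^3$, and $N = N_0 N_1 \geq 2^4$ follow by monotonicity.

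The step I expect to be the main obstacle is $\S = g + 2\K N_0 \geq 0$, which reduces to $\K \geq 0$. Expanding $\K = c^\delta \coeffs(\B) + \sum_{j=0}^{(2\delta+1)(\delta+1)^\nu} (\B/2)\B^j$ with $c = 1 + a\B + g \geq 1$, the sum is strictly positive, so it suffices to show $\coeffs(\B) \geq 0$. The leading term of $\coeffs(\B)$ comes from the multi-index $\mathbf{i} = 0$ and equals $\delta!\,a_{0,\dots,0}\,\B^{(\delta+1)^{\nu+1}}$, strictly dominating in exponent over every other term, whose total absolute value I would bound by $\delta!\L\,\B^{(\delta+1)^{\nu+1}-1}$. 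The choice of $\beta$ guarantees $\B > 2\L$, so the leading term wins and $\coeffs(\B) > 0$. The bookkeeping here---which multi-indices contribute, the ordering of exponents $(\delta+1)^{\nu+1} - \sum_s i_s(\delta+1)^s$, and the factorial factors---is where I expect small subtleties to hide.

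The last two bounds are then immediate. For $\T = M + (\B - 2)\B^{(\delta+1)^{\nu+1}}N_0$, the mask $M$ is a sum of non-negative multiples of powers of $\B$ (so $M \geq 0$), and $\B - 2 \geq 0$ together with $N_0 > 0$ gives $\T \geq 0$. Finally, $\RR = (\S + \T + 1)N + \T + 1 \geq N + 1 \geq 17 > 0$.
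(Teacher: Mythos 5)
Your proof is correct and follows essentially the same structure as the paper: direct unwinding of the definitions for $\bLowercase, \B, N_0, N_1, N$, then reducing $\S \geq 0$ to positivity of $\K$ and hence of $\coeffs(\B)$, isolating the dominant $\mathbf{i}=0$ term, and finishing $\T, \RR$ immediately. The only (minor) divergence is in bounding the non-leading part of $\coeffs(\B)$: the paper re-indexes by $r = (\delta+1)^{\nu+1} - \sum_s i_s(\delta+1)^s$ and sums a geometric series, whereas you bound every non-zero multi-index term by $\delta!\,|a_{\mathbf{i}}|\,\B^{(\delta+1)^{\nu+1}-1}$ and use $\sum_{\mathbf{i}\neq 0}|a_{\mathbf{i}}| \leq \L$ together with $\B > 2\L$ and $a_{0,\dots,0} \geq 1$; both estimates close the same way.
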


\begin{proof}
For simplicity, we omit writing the arguments $a, f, g$ in all polynomials.
Since $a, f$ and $g$ are all nonnegative, most of the claimed properties are immediate. The proof of this lemma proceeds in the order in which quantities have been defined in Definition~\ref{def:codingVariables}.

First note that $\beta=4^r$ is positive. One verifies that $\bLowercase = 1 + 3(2a + 1)f > 2$ and thus $\B = \beta \bLowercase^\delta = 4^r \bLowercase^\delta > 2$. This means that
\[M = \mask(\bLowercase, \B, \mathbf{n}) = \sum_{j=0}^{n_\nu} m_j \B^j \]
is nonnegative because $\B \geq \bLowercase > 2$ and $m_j \geq \B - \bLowercase$. The polynomials $N_0, N_1, N$ and $c$ are defined as products and powers of the previous quantities and thus must also be positive.

The remaining quantities to check are $\RR$, $\S$, and $\T$, and their definition involves $\K$. The definition of $\K = \values (c, \B)$, in turn, includes the term $\coeffs(\B)$, so we start by showing that $\coeffs(\B) > 0$. It turns out that in the definition of $\coeffs(\B)$ in \eqref{Def:coeffs}, the $\mathbf{i}=0$ term dominates, so we can estimate
\begin{align*}
\left|\coeffs(\B) - \delta !\, a_{0,\ldots,0} \B ^ {(\delta+1)^{\nu+1}} \right|
&\leq
\sum_{0 < \|\,\mathbf{i}\,\| \leq \delta} \delta ! \, |a_{\mathbf{i}}|\,
  \B^{(\delta+1)^{\nu+1} - \sum_{s=0}^v i_s (\delta+1)^s}
\\
&\stackrel{(*1)}{\leq} \delta ! \, \mathcal L \sum_{r=0}^{(\delta+1)^{\nu+1}-1} \B^r
\\
&\stackrel{(*2)}{\leq} (\B - 1) \frac{\B^{(\delta+1)^{\nu + 1}} - 1}{\B - 1}
< \B^{(\delta+1)^{\nu+1}}
\;.
\end{align*}
In (*1), we re-organize the summation over $\mathbf{i}$ as follows: we write $r:=(\delta+1)^{\nu+1}-\sum_{s=0}^\nu i_s(\delta+1)^s$, interpreting the tuple $\mathbf{i}$ as digits of an integer in base $\delta+1$. Moreover, we use  $\L = \sum_{\mathbf{i}} |a_{\mathbf{i}}|\ge a_{\mathbf{i}}$. In (*2), we use $\delta!\,\L<\beta<\B$.

Using the hypothesis that $P$ is \virtuallypositive, we finally have
\begin{align*}
  \coeffs(\B) > (\delta!\, a_{0,\ldots,0} - 1) \B^{(\delta+1)^{\nu+1}} \geq 0
\;.
\end{align*}
This allows us to deduce that $\K > 0$. The remaining bounds on $\S, \T, \RR$ follow immediately from their definition.
\end{proof}

Now we discuss further lower bounds under the same hypotheses as in the previous lemma.

\begin{lemma}
\label{lemma:xybounds}
Suppose $P$ is \virtuallypositive{} and $a\ge 0$, $f>0$, and $g\ge 0$.
Then the polynomials $\bLowercase, \X$ and $\Y$ from Definition~\ref{def:codingVariables} satisfy:
	\begin{align}
	\X(a, f, g) &\ge 3 \bLowercase(a, f) \\
	\Y(a, f, g) & \ge \max \{\bLowercase(a, f), 2^8 \} \;.
	\end{align}
\end{lemma}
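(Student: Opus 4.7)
The plan is to reduce both inequalities to the elementary observation that $N$ is a large power of $\B$, and that $\B$ already dominates $\bLowercase$. Since $P$ is suitable for coding, $\delta \ge 1$ and $\beta = 4^r \ge 1$, so from $\B = \beta\,\bLowercase^\delta$ we immediately get $\B \ge \bLowercase$. This is the only algebraic fact about $\B$ that I will use.

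Next, I unpack $N = N_0 N_1 = 4\,\B^{(\delta+1)^\nu+1 + (2\delta+1)(\delta+1)^\nu+1}$. Both exponents are at least $1$, so in particular
\[
N \;\ge\; 4\,\B^{2} \;\ge\; 4\,\bLowercase^{2}.
\]
From this, $\Y = N^2 \ge 16\,\bLowercase^4 \ge \bLowercase$ (using $\bLowercase > 2$, so certainly $\bLowercase^4 \ge \bLowercase$), while the bound $\Y \ge 2^8$ is already available because Lemma~\ref{lemma:coding-lower-bounds} gives $N \ge 2^4$. This establishes the estimate on $\Y$.

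For $\X = (N-1)\RR$, I use $\RR \ge 1$ (which follows from the strict inequality $\RR > 0$ in Lemma~\ref{lemma:coding-lower-bounds}, since $\RR$ is integer-valued under integer inputs) and argue that $N - 1 \ge 3\bLowercase$. Indeed, $N - 1 \ge 4\bLowercase^2 - 1$, and $4\bLowercase^2 - 1 \ge 3\bLowercase$ is equivalent to $(\bLowercase - 1)(4\bLowercase + 1) \ge 0$, which holds for $\bLowercase \ge 1$ and in particular for our $\bLowercase > 2$. Combining yields $\X \ge 3\bLowercase$.

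I do not expect any serious obstacle. The only subtlety is making sure we have the right lower bound on the exponents of $\B$ appearing in $N_0$ and $N_1$ regardless of how small $\nu$ or $\delta$ get (recall $\delta \ge 1$ since $P$ is suitable for coding), and verifying that $\RR \ge 1$ rather than just $\RR > 0$; both are essentially bookkeeping. The estimate $N \ge 4\B^2$ is far stronger than needed, so the proof has comfortable slack.
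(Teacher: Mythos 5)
Your proof is correct and follows essentially the same strategy as the paper's: unwind the definitions of $\X$, $\Y$, $\RR$, $N$, $N_0$, $N_1$, use the lower bounds from Lemma~\ref{lemma:coding-lower-bounds}, and exploit $\B = \beta\bLowercase^\delta \ge \bLowercase$. The only cosmetic difference is that the paper peels off $N-1\ge 1$ to chain $\X\ge\RR\ge N\ge N_1\ge 4\B\ge 3\bLowercase$, while you peel off $\RR\ge 1$ and bound $N-1\ge 4\bLowercase^2-1\ge 3\bLowercase$; both are valid and equally elementary.
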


\begin{proof}
We will use the estimates developed in the previous lemma. The first condition is follows by unfolding the definitions of $\X, \RR, N, N_1, \B$:
\begin{align*}
\X &= (N-1)\RR \geq \RR = (\S + \T + 1)N + \T + 1 \geq N = N_0 N_1 \geq N_1 \\
&= 4 \B^{(2\delta+1)(\delta+1)^\nu + 1} \geq 4 \B = 4 \beta \bLowercase ^\delta \geq 3\bLowercase
\;.
\end{align*}

We derive the first lower bound for $\Y$ in the same way:
\[ \Y = N^2 \geq N = N_0 N_1 \geq N_0 = \B^{(\delta + 1)^\nu + 1} \geq \B = \beta \bLowercase ^\delta \geq \bLowercase
\;.
\]
The second bound $\Y\ge 2^8$ follows directly from the definition $\Y=N^2$.
\end{proof}


\Newpage

\subsection{Upper Bounds}

\begin{lemma}[Upper bounds on $\S, \T$] \label{lemma:coding-upper-bounds}
Suppose $P$ is \virtuallypositive{} and $a\ge 0$, $f>0$. Then we have that $M < N_0$.
Moreover, if $g \in [0, 2b\B^{(\delta+1)^\nu})$, then $\S < N$ and $\T < N$.
\end{lemma}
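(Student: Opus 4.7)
The three bounds split naturally. My plan is to dispatch $M<N_0$ by a direct digit-by-digit estimate on the mask, derive $\T<N$ as a short consequence via a comparison of exponents of~$\B$, and then devote the main effort to $\S<N$, which requires controlling the value polynomial~$\K$.

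For the first, I simply unfold the definition of the mask: $M=\sum_{i=0}^{n_\nu}m_i\B^i$ with each digit $m_i\le\B-1$, so $M\le(\B-1)\sum_{i=0}^{n_\nu}\B^i=\B^{n_\nu+1}-1<N_0$. For $\T<N$, combining $M<N_0$ with the elementary bound $1+(\B-2)\B^{(\delta+1)^{\nu+1}}\le(\B-1)\B^{(\delta+1)^{\nu+1}}<\B^{(\delta+1)^{\nu+1}+1}$ yields
\[
\T<N_0\cdot\B^{(\delta+1)^{\nu+1}+1}=\B^{(\delta+2)(\delta+1)^\nu+2}.
\]
Since $\delta+2\le 2(\delta+1)$, the last exponent is at most $2(\delta+1)^{\nu+1}+2$, and together with the factor~$4$ present in $N_1$ this gives $\T<4\,\B^{2(\delta+1)^{\nu+1}+2}=N$.

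The substantial step is $\S<N$. First I would observe that $f>0$ forces $\bLowercase=1+3(2a+1)f\ge 6a+4>a$, so $a<\bLowercase$; moreover $\B=\beta\bLowercase^\delta\ge 4\bLowercase$ follows from $\beta\ge 4$ and $\delta\ge 1$. The hypothesis $g<2\bLowercase\B^{n_\nu}$ therefore gives $g<N_0/2$. Since $\K\in\Z$, it then suffices to prove $\K<N_1/2$: this yields $2\K\le N_1-1$, whence
\[
\S=g+2\K N_0<N_0/2+(N_1-1)N_0=N-N_0/2<N.
\]
To bound $\K=c^\delta\coeffs(\B)+\sum_{j=0}^{m}(\B/2)\B^j$, where $m:=(2\delta+1)(\delta+1)^\nu$, I estimate the two summands separately. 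From $a<\bLowercase$, $g<2\bLowercase\B^{n_\nu}$ and $n_\nu\ge 1$ one reads off $c=1+a\B+g<3\bLowercase\B^{n_\nu}$, hence $c^\delta<3^\delta\bLowercase^\delta\B^{\delta(\delta+1)^\nu}$. Re-using the estimate inside the proof of Lemma~\ref{lemma:coding-lower-bounds}, $\coeffs(\B)<(\delta!\L+1)\B^{(\delta+1)^{\nu+1}}\le 2\delta!\L\cdot\B^{(\delta+1)^{\nu+1}}$. Multiplying and substituting $\bLowercase^\delta=\B/\beta$ together with $\beta>2\delta!\L(\nu+2)^\delta$ yields
\[
c^\delta\coeffs(\B)<\bigl(3/(\nu+2)\bigr)^\delta\,\B^{m+1}\le\B^{m+1},
\]
valid for $\nu\ge 1$. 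A one-line geometric-series computation gives $\sum_{j=0}^m(\B/2)\B^j<\B^{m+1}$, and adding the two estimates produces $\K<2\B^{m+1}=N_1/2$.

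The hard part here is this last bound on $c^\delta\coeffs(\B)$: both the factor $3^\delta$ coming from the crude estimate $c<3\bLowercase\B^{n_\nu}$ and the factor $\delta!\L$ coming from $\coeffs(\B)$ must simultaneously be absorbed into the single scaling $\bLowercase^\delta/\B=1/\beta$. This is precisely the purpose of the inequality $\beta>2\delta!\L(\nu+2)^\delta$ baked into the definition of $\beta$: the extra factor $(\nu+2)^\delta$ is exactly what neutralises $3^\delta$ once $\nu\ge 1$.
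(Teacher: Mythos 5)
Your proposal is correct and follows essentially the same route as the paper: bound $M$ digit-by-digit, bound $\T$ by comparing exponents of $\B$, and for $\S$ estimate $c^\delta$ and $\coeffs(\B)$ separately and absorb the resulting factors $3^\delta$ and $\delta!\L$ using the inequality $\beta>2\delta!\L(\nu+2)^\delta$ built into the definition of $\beta$. The only differences are cosmetic bookkeeping: you obtain $\K<2\B^{m+1}=N_1/2$ and conclude via integrality ($2\K\le N_1-1$), whereas the paper gets the tighter $\K<\tfrac32\B^{m+1}$ and wraps up through $g+2\K<N_1$; both are valid, and your observation that the argument implicitly requires $\nu\ge 1$ (guaranteed by suitability for coding) matches a hypothesis the paper also uses silently.
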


\begin{proof}
We start bounding $M$ by unfolding the definition of mask in Definition~\ref{def:mask}. In that definition, all $m_j$ are either $\B - 1$ or $\B - \bLowercase$ so in particular $m_j < \B$. This gives
\begin{equation}
  M =\mask(\bLowercase,\B,\mathbf{n})= \sum_{i=0}^{n_\nu} m_j \B^j < \B^{n_{\nu}+1} = \B^{(\delta+1)^\nu + 1} = N_0 \label{eq:MltN0}.
\end{equation}

We now fix $g \in [0, 2b\B^{(\delta+1)^\nu})$. Note that $\S$ and $\T$ are defined in terms of the polynomials $\K, N_0, M, \B$, which in turn are defined in terms of $c$ and $\coeffs(\B)$.

Our first bound goes as follows, using $\bLowercase>a$ by definition:
\begin{equation}  \label{eq:c-bound}
c^\delta
= (1+a\B+g)^\delta
< \big(\bLowercase\B + 2\bLowercase\B^{(\delta+1)^\nu}\big)^\delta
< \big(3\bLowercase\B^{(\delta+1)^\nu}\big)^\delta
= 3^\delta \bLowercase^\delta \B^{\delta(\delta+1)^\nu}
\;.
\end{equation}
Now we establish an upper bound on $\coeffs(\B)$ in the same way as before (again using $|a_{\mathbf{i}}|\le \L$)
\begin{align} \label{eq:DB-bound}
    \coeffs(\B) &= \sum_{|\mathbf{i}| \le \delta} \mathbf{i}!\, (\delta - \mathbf{i})!\, a_{\mathbf{i}}\, \B^{(\delta+1)^{\nu+1} - \sum_{s=0}^{\nu}i_s(\delta+1)^s} \nonumber \\
    &<\sum_{|\mathbf{i}| \le \delta} \delta ! \,|a_{\mathbf{i}}|\,\B^{(\delta+1)^{\nu+1}} < \delta!\,\mathcal{L}\B^{(\delta+1)^{\nu+1}}
\;.
\end{align}
We observe one final inequality, using the definition of $\beta$ in \eqref{Eq:DefBeta}.
\begin{equation} \label{eq:B-bound}
  \B = \beta \bLowercase^\delta = 4^r \bLowercase^\delta > 2 \bLowercase^\delta 3^\delta \delta! \mathcal{L}
\;.
\end{equation}
We can combine the inequalities  \eqref{eq:c-bound}, \eqref{eq:DB-bound} and \eqref{eq:B-bound} to obtain
\begin{align}
  \left| c^\delta \coeffs(\B) \right| <
  3^\delta \bLowercase^\delta  \B^{\delta(\delta+1)^\nu}  \delta!\,\L\B^{(\delta+1)^{\nu+1}}
  < \frac{1}{2}\B^{(2\delta+1)(\delta+1)^{\nu}+1}
\;.
\end{align}
Our next step is to give an upper bound on $\K$, as follows:
\begin{align} \label{eq:K-bound}
  \K &= \values(c, \B) = c^\delta \coeffs(\B) + \sum_{i=0}^{(2 \delta+1)(\delta + 1)^\nu} \frac{\B}{2} \B^{i} \nonumber \\
& <  \frac{1}{2}\B^{(2\delta+1)(\delta+1)^{\nu}+1} + \frac{\B}{2}\cdot  2\,\B^{(2\delta+1)(\delta+1)^\nu} = \frac{3}{2}\B^{(2\delta+1)(\delta+1)^{\nu}+1}
\;.
\end{align}

Now everything is set up to prove the bound on $\T$:
\begin{equation} \begin{split}
  \T &= M + (\B-2)\B^{(\delta+1)^{\nu + 1}} N_0 < N_0 + (\B-2)\B^{(\delta+1)^{\nu + 1}} N_0 \\
  &\le \B^{(\delta+1)^{\nu + 1} + 1} N_0 \le N_0 N_1 = N
\;.
\end{split} \end{equation}

We continue with the bound on $\S = g + 2\K N_0$. First note that
\begin{equation}
  g < 2 \bLowercase\B^{(\delta+1)^\nu} < \B^{(\delta + 1)^\nu + 1}\label{eq:gltN0}
\end{equation}
Combining this with \eqref{eq:K-bound}, we obtain
\begin{align*}
    g + 2\K &<  \B^{(\delta + 1)^\nu + 1} + 3\B^{(2\delta +1)(\delta+1)^\nu +1} < 4\B^{(2\delta +1)(\delta+1)^\nu +1} = N_1
\;.
\end{align*}
Using this we can finally prove the upper bound on $\S$
\begin{equation}
  \S = g + 2 \K N_0 < (g + 2\K) N_0 \le N_1 N_0 = N
\;.
\qedhere
\end{equation}
\end{proof}

Now we are ready to state and prove our first main theorem, which turns a Diophantine set of arbitrary complexity (i.e.\ arbitrary number of unknowns $\nu$) into an equivalent condition of fixed complexity, involving a binomial coefficient. The key ingredient is a \emph{code} $g$ in the sense of Definition~\ref{def:mask}.
In the subsequent sections of this paper, the binomial coefficient will then be transformed into a Diophantine polynomial of bounded complexity.

\subsection{Encoding a Diophantine Set}

The first main step in the proof is to encode a Diophantine set, described by a polynomial $P$, by number theoretic conditions. These involve exponentials, but have bounded complexity, so they can be turned in later steps into polynomial equations with bounded complexity.

\begin{mainstep}[Coding Theorem]
\label{step:polynomial-to-binomial} \sunref{\cite[Theorem 3.1]{Sun}}
Given a polynomial $P$ that is suitable for coding, define the numbers $\alpha$ and $\gamma$ as well as the polynomials $\bLowercase, \X, \Y$ as in Definition~\ref{def:codingVariables}. For $a \ge 0$ and $f \neq 0$ such that
\begin{equation}
\bLowercase := \bLowercase(a, f) \in 2\uparrow
\end{equation}
 the following are equivalent:
\begin{equation}
	\exists (z_1,\ldots,z_\nu)\in [0, \bLowercase)^\nu\colon P(a, z_1, \ldots, z_\nu) =0
\label{Eq:polynomial-zero}
	\end{equation}
	and
\begin{equation}
	\exists g \in [\bLowercase, \gamma \bLowercase^\alpha) \colon  \Y(a, f) \, |\, \binom{2 \X(a, f, g)}{\X(a, f, g)}
\;.
\label{Eq:divisibility-binomial}
	\end{equation}

Moreover, \eqref{Eq:divisibility-binomial} $\implies $ \eqref{Eq:polynomial-zero} already holds when $g \in [0, 2\gamma \bLowercase^\alpha)$.

\end{mainstep}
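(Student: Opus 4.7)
The plan is to chain together Lemmas~\ref{lemma:tau-binom}, \ref{lemma:code-mask}, and \ref{lemma:polynomial-zero-tau-zero} from Section~\ref{Sec:Coding}, using the splitting/combining identity of Lemma~\ref{Lem:TauBasic}, items~\eqref{item:tauaddcombine}--\eqref{item:tauaddsplit}, to decompose a single $\tau$-condition into two independent ones. The divisibility condition~\eqref{Eq:divisibility-binomial} should first translate into a single $\tau$-identity via Lemma~\ref{lemma:tau-binom}, which then splits into a \emph{mask} condition governing the structure of $g$ as a code (handled by Lemma~\ref{lemma:code-mask}) and an \emph{evaluation} condition asserting that $P$ vanishes on the tuple encoded by $g$ (handled by Lemma~\ref{lemma:polynomial-zero-tau-zero}).

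In more detail, observe that $\X = (N-1)\RR$, $\Y = N^2$, and $\RR = (\S + \T + 1)N + \T + 1$ fit the statement of Lemma~\ref{lemma:tau-binom} exactly with $(S, T, R) = (\S, \T, \RR)$. Since $\bLowercase \in 2\uparrow$ by hypothesis, both $\B = 4^r \bLowercase^\delta$ and $N = N_0 N_1$ are powers of $2$. Lemma~\ref{lemma:coding-upper-bounds} gives $0 \le \S, \T < N$ whenever $g \in [0, 2\bLowercase\B^{(\delta+1)^\nu})$, which equals $[0, 2\gamma\bLowercase^\alpha)$ by the identity $\gamma\bLowercase^\alpha = \B^{(\delta+1)^\nu}\bLowercase$. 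Thus Lemma~\ref{lemma:tau-binom} reduces~\eqref{Eq:divisibility-binomial} to the single condition $\tau(\S, \T) = 0$.

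Next, write $\S = g + (2\K)\,N_0$ and $\T = M + (\B-2)\B^{(\delta+1)^{\nu+1}}\,N_0$. Lemma~\ref{lemma:coding-upper-bounds} ensures $g, M < N_0$ and $N_0 \in 2\uparrow$, so Lemma~\ref{Lem:TauBasic}\eqref{item:tauaddcombine}--\eqref{item:tauaddsplit} splits $\tau(\S, \T) = 0$ into $\tau(g, M) = 0$ together with $\tau(2\K, (\B - 2)\B^{(\delta+1)^{\nu+1}}) = 0$. Combined with $g < N_0 = \B^{n_\nu + 1}$, Lemma~\ref{lemma:code-mask} (applied with $b = \bLowercase$) converts the first of these into: there exist $z_1, \ldots, z_\nu \in [0, \bLowercase)$ with $g = \code((z_1,\ldots,z_\nu), \B, \mathbf{n})$. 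Since $\B$ is even, $(\B-2)\B^{(\delta+1)^{\nu+1}} = 2(\B/2 - 1)\B^{n_{\nu+1}}$, and a short argument via items~\eqref{item:tausigma} and~\eqref{item:shifttau} of Lemma~\ref{Lem:TauBasic} gives $\tau(2u, 2v) = \tau(u, v)$. The second condition thus becomes $\tau(\K, (\B/2 - 1)\B^{n_{\nu+1}}) = 0$, which by Lemma~\ref{lemma:polynomial-zero-tau-zero} --- applied with parameter $z_0 = a$ and with unknowns $z_1, \ldots, z_\nu$ read off from $g$ --- is equivalent to $P(a, z_1, \ldots, z_\nu) = 0$.

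This finishes the direction \eqref{Eq:divisibility-binomial} $\Rightarrow$ \eqref{Eq:polynomial-zero} for any $g \in [0, 2\gamma\bLowercase^\alpha)$, establishing the \emph{moreover} clause. For the forward direction, the ``suitable for coding'' hypothesis allows one to choose a solution with at least one $z_i > 0$, so that $g = \sum_i z_i\B^{n_i} \ge \B \ge \bLowercase$; a routine geometric-series bound shows $g < \bLowercase\B^{n_\nu} = \gamma\bLowercase^\alpha$, placing $g$ in the tighter interval required by~\eqref{Eq:divisibility-binomial}. The principal obstacle is not any single deep step but careful bookkeeping: one must verify the various size bounds on $g, M, \S, \T$ and ensure that the factors of $2$ between the two $\tau$-conditions, together with the constants $\alpha, \beta, \gamma$ of Definition~\ref{def:codingVariables}, have been tuned so that the chain of equivalences closes up cleanly.
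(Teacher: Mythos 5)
Your proposal is correct and follows essentially the same route as the paper's proof: apply Lemma~\ref{lemma:tau-binom} with $(S,T,R)=(\S,\T,\RR)$, split $\tau(\S,\T)=0$ via Lemma~\ref{Lem:TauBasic}~\eqref{item:tauaddcombine}--\eqref{item:tauaddsplit} into the mask condition $\tau(g,M)=0$ and the evaluation condition $\tau(2\K,(\B-2)\B^{n_{\nu+1}})=0$, and handle these by Lemmas~\ref{lemma:code-mask} and~\ref{lemma:polynomial-zero-tau-zero} respectively, with the ``suitable for coding'' hypothesis ensuring $g\ge\bLowercase$ in the forward direction. The only bookkeeping the paper makes explicit that you omit (and flag as routine) is the preliminary observation that $\bLowercase\in 2\uparrow$ forces $f>0$ and hence $a<\bLowercase$, which is needed both to invoke the bound lemmas and to verify the hypothesis $\B>2\delta!\,\L(1+\|\mathbf z\|)^\delta$ of Lemma~\ref{lemma:polynomial-zero-tau-zero}.
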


Condition~\eqref{Eq:polynomial-zero} is stronger than saying that $P$ has a Diophantine solution for parameter $a$ with $\nu$ unknowns: it also requires that the $z_1, \ldots, z_\nu$ are bounded. However, when using the theorem we are able to choose $f$ freely and thus increase $\bLowercase$ such that it is larger than any given $z_1, \ldots, z_\nu$.

Like before, we often suppress the dependence of our polynomials on $a, f, g$. The numbers $a$ and $f$ are fixed anyway, but occasionally we write $\X(g)$ when we want to emphasize the dependence on the code $g$.

\begin{proof}[Proof of the theorem]
We prove this theorem by showing two separate implications. First, however, we will establish some basic properties which will be useful for both directions.

Observe that a priori, $f$ can be negative. However, we clearly have $\bLowercase>0$ because $\bLowercase\in 2\uparrow$, and thus $f>0$ in any case because $\bLowercase = 1 + 3(2a+1)f$. This implies
\begin{equation} \label{eq:a-lt-b}
  a < \bLowercase
\;.
\end{equation}
Moreover, we can use the estimates from Lemmas~\ref{lemma:coding-lower-bounds}--\ref{lemma:coding-upper-bounds} that were proved under the hypothesis $f>0$.

Next, we note that the hypothesis $\bLowercase \in 2 \uparrow$ implies that $\B =4^r\bLowercase^\delta \in 2\uparrow$ and hence
\begin{equation} \label{eq:vars-pow-2}
N_0, N_1, N \in 2 \uparrow
\;.
\end{equation}

\proofstep{The implication \eqref{Eq:polynomial-zero} $\implies$ \eqref{Eq:divisibility-binomial}.}

By hypothesis, there is a tuple $(z_1,\ldots,z_\nu) \in [0, \bLowercase)^\nu$ such that $P(a, z_1, \ldots, z_\nu) = 0$. We need to construct $g \in [\bLowercase, \gamma \bLowercase^\alpha)$ such that $\Y | \binom{2\X(g)}{\X(g)}$. Define
\[
g := \sum_{i=1}^\nu z_i \B^{(\delta+1)^i}
\;,
\]
which is a code in the sense of Definition~\ref{def:mask}.

We start by checking that $g$ is indeed in the interval $[\bLowercase, \gamma \bLowercase^\alpha)$. Since $P$ is \virtuallypositive{}, there is an $i$ such that $z_i > 0$, so
\begin{equation}
g \geq z_i \B = z_i \beta \bLowercase^\delta \geq \bLowercase
\;. \label{eq:g-geq-b}
\end{equation}
We also have the upper bound
\begin{equation} \label{eq:g-upper-bound}
g \leq \sum_{i=1}^{\nu} (\bLowercase-1) \B^{(\delta+1)^i}
< \bLowercase \B^{(\delta+1)^\nu}
= \bLowercase (\beta \bLowercase^\delta)^{(\delta+1)^\nu}
= \beta^{(\delta+1)^\nu} \bLowercase^{\delta (\delta+1)^\nu + 1}
= \gamma \bLowercase^\alpha
\;.
\end{equation}

It remains to show that $\Y | \binom{2\X}{\X}$. To prove this we will now consecutively apply the coding lemmas from the previous section. We begin by verifying that all of the hypotheses of Lemma~\ref{lemma:polynomial-zero-tau-zero} are satisfied.

We know $\L = \sum_{\mathbf{j}} |a_{\mathbf{j}}| \geq |a_{\mathbf{i}}|$, and  have already observed that $\B \in 2 \uparrow$. We have $z_1,\ldots,z_\nu<\bLowercase$; setting $z_0:=a$ for convenience, \eqref{eq:a-lt-b} shows $z_0<\bLowercase$ and thus $\|\mathbf{z}\|<b(\nu+1)$. Using  the definitions of $\B$ and $\beta$, we conclude
\[
\B = \beta \bLowercase^\delta > 2\delta! \L (\nu+2)^\delta \bLowercase^\delta
> 2 \delta! \L (1 + \|{\mathbf{z}}\|)^\delta
\;.
\]

Since $z_0 = a$, we can simply write
\[
c = 1 + a \B + g = 1 + \sum_{i=0}^\nu z_i \B^{(\delta+1)^i} = 1 + \code(\mathbf{z}, \B, \mathbf{n})
\;.
\]
Consequently, $\K$ coincides with $K$ defined in Lemma~\ref{lemma:polynomial-zero-tau-zero},  equation~\eqref{eq:code-K-def}. Since $P(z_0,\ldots,z_\nu)=0$ by hypothesis, we can apply Lemma~\ref{lemma:polynomial-zero-tau-zero} and obtain
\begin{align}
\tau\left(\K, (\B/2-1)\B ^{(\delta+1)^{\nu + 1}}\right) = 0
\;.
\end{align}
Multiplying both arguments of $\tau$ by $2$ allows us to eliminate the division:
\begin{equation}
  \tau\lp 2 \K, (\B-2)\B ^{(\delta+1)^{\nu + 1}}\rp = 0
\;. \label{eq:tau2KB0}
\end{equation}

We now apply Lemma~\ref{lemma:code-mask}. The hypotheses $\bLowercase, \B \in 2 \uparrow$ have been checked before. The inequality $\bLowercase \leq \B$ follows from the definition $\B = \beta \bLowercase^\delta$ and we have shown $\B \geq 2$ in Lemma~\ref{lemma:coding-lower-bounds}. Note that $g \geq 0$ by \eqref{eq:g-geq-b} and that by definition $g$ is a code for $(z_1, \ldots, z_\nu)$. The lemma thus implies
\begin{equation}
\tau(g, M) = 0
\;.
\label{eq:tau-g-M-is-0}
\end{equation}

Our next claim is  that the two conditions \eqref{eq:tau2KB0} and \eqref{eq:tau-g-M-is-0} can be combined, using Lemma~\ref{Lem:TauBasic} \eqref{item:tauaddcombine}, into the single condition
\begin{equation} \label{eq:tau-2-g-K}
\tau(\S, \T) =\tau\lp g + 2 \K N_0, M + (\B-2) \B^{(\delta+1)^{\nu + 1}}N_0\rp  = 0
\;.
\end{equation}
We can do this since $N_0 \in 2 \uparrow$ by \eqref{eq:vars-pow-2}, while $M < N_0$ by Lemma~\ref{lemma:coding-upper-bounds}, and
\[
g \leq \bLowercase \B^{(\delta +1)^\nu} < \B^{(\delta +1)^\nu + 1} = N_0
\;
\]
by \eqref{eq:g-upper-bound}.

We are now in a position to apply Lemma~\ref{lemma:tau-binom}. The fact that $N \in 2 \uparrow$ is \eqref{eq:vars-pow-2} and $0 \le \S, \T < N$ follows from Lemmas~\ref{lemma:coding-lower-bounds} and \ref{lemma:coding-upper-bounds}.
We can therefore apply the lemma to conclude
\[
N^2 \,|\, \binom{2(N-1)\RR}{(N-1)\RR}
\;,
\]
which translates to
\[ \Y \,|\, \binom{2\X}{\X} \]
using the definition of $\X, \Y$. This completes the proof of the first implication.

\needspace{3\baselineskip}
\proofstep{The converse direction \eqref{Eq:divisibility-binomial} $\implies$ \eqref{Eq:polynomial-zero}}{ (assuming $g \in [0,2\gamma \bLowercase^\alpha)$).}

Fix $g \in [0, 2 \gamma \bLowercase^{\alpha})$ such that
\[ \Y \,|\, \binom{2\X}{\X} \]
where of course $\X := \X(a, f, g)$ as always, also for $\Y$, $N$, $\RR$, etc. Using the definitions of $\X, \Y$ we obtain
\[
N^2 \,|\, \binom{2(N-1)\RR}{(N-1)\RR}
\;.
\]
Since $N \in 2 \uparrow$ by \eqref{eq:vars-pow-2} and $0 \le \S, \T < N$ by Lemmas~\ref{lemma:coding-lower-bounds} and \ref{lemma:coding-upper-bounds}, we can apply Lemma~\ref{lemma:tau-binom} and unfold the definitions of $\S$ and $\T$ to conclude
\[
0=\tau(\S, \T) = \tau\lp g + 2 \K N_0, M + (\B-2) \B^{(\delta+1)^{\nu + 1}}N_0\rp
\;.
\]
As before, we can separate this into two separate conditions involving $\tau$ using Lemma~\ref{Lem:TauBasic} \eqref{item:tauaddsplit} since $N_0 \in 2 \uparrow$, $M < N_0$ by Lemma~\ref{lemma:coding-upper-bounds} and by assumption
\begin{equation} \label{eq:reverse-g-lt-N0}
g
< 2 \gamma \bLowercase^\alpha
= 2\bLowercase (\beta \bLowercase^\delta)^{(\delta+1)^\nu}
= 2\bLowercase \B^{(\delta+1)^\nu}
\leq \B^{(\delta+1)^\nu + 1} =  N_0
\; .
\end{equation}
We obtain
\begin{align} \label{eq:reverse-tau-g-M-is-zero}
&\tau(g, M) = 0 \qquad\text{and} \\
&\tau\lp 2\K, (\B-2) \B^{(\delta+1)^{\nu + 1}}\rp = 0\label{eq:reverse-tau-2-K}  \qquad\text{and thus} \\
&\tau\left(\K, (\B/2-1)\B ^{(\delta+1)^{\nu + 1}}\right) = 0
\label{eq:reverse-tau-2-K-halbiert}
\end{align}
because $\B$ is even.

In order to apply Lemma~\ref{lemma:code-mask}, we have already checked  that $\bLowercase, \B \in 2 \uparrow$, $\bLowercase \leq \B$ and $\B \geq 2$. Note that $g \geq 0$ by assumption and in \eqref{eq:reverse-g-lt-N0} we showed $g < N_0 = \B^{(\delta+1)^\nu + 1} = \B^{n_\nu + 1}$. Therefore, using \eqref{eq:reverse-tau-g-M-is-zero}, the lemma implies
that there are $z_1, \ldots, z_\nu \in [0, \bLowercase)$ with
\[
g = \sum_{i=1}^\nu z_i \B^{n_i}
\;.
\]

The next step is to apply Lemma~\ref{lemma:polynomial-zero-tau-zero}. We have specific values for $z_1, \ldots, z_\nu$ and set again $z_0:=a$.
Note that all $z_1, \ldots, z_\nu$ are bounded by $\bLowercase$ and also $z_0 = a < \bLowercase$ by \eqref{eq:a-lt-b}.
The conditions $\L\ge|a_{\mathbf{i}}|$  and $\B \in 2 \uparrow$ have been checked before. We can further establish the required lower bound on $\B$
\[
\B = \beta \bLowercase^\delta > 2\delta!\L (\nu + 2)^\delta \bLowercase^\delta \geq 2\delta! \L (1 + z_0 + \ldots + z_\nu)^\delta
\;.
\]
Note that $c = 1 + a\B + g = 1 + \sum_{i=0}^\nu z_i \B^{n_i}$, so the definitions of $K$ and $\K$ coincide.

Using \eqref{eq:reverse-tau-2-K-halbiert}, Lemma~\ref{lemma:polynomial-zero-tau-zero} implies
\[
P(a, z_1, \ldots, z_\nu) = 0
\]
as required.
\end{proof}

\newpage 

\section{Lucas Sequences}
\label{Sec:Lucas}
Theorem~\ref{step:polynomial-to-binomial} reduces the existence of solutions of an arbitrary Diophantine polynomial in $\nu$ variables to the existence of a single code $g$ that satisfies a divisibility condition on a certain binomial coefficient (\cref{Eq:divisibility-binomial}), subject to the condition that the polynomial $\bLowercase(a,f)$ is a power of two. 

To be a power of two is, of course, not a priori a Diophantine condition, but this condition can be encoded by well-known recursive sequences called \emph{Lucas sequences}. This is one of the key results of this section (Lemma~\ref{lemma:LucasCongruenceNEW}). Next, we have a pair of lemmas (Lemmas~\ref{lemma:Sun4.3} and \ref{lemma:PellDiophantine2}) that recognize values of Lucas sequences in terms of elementary Diophantine conditions. Unfortunately, these Diophantine conditions consist of many sub-equations, which we collect in Definition~\ref{def:VariableDefinition} and relations~\eqref{eq:DFI}--\eqref{eq:weakineq}. 

\begin{definition}[Lucas sequence]
A \emph{Lucas sequence with parameter $A\in\Z$} is a sequence $(x_n)_{n\in \Z}$ that satisfies the recursive relation
\begin{equation}
x_{n+1}=Ax_n-x_{n-1}
\;.
\end{equation}
Every such sequence is defined uniquely by two initial values, and we define two particular Lucas sequences with initial values
\begin{align}
\psi_0(A)=0, \quad \psi_1(A)=1 \;; \\
\chi_0(A)=2, \quad \chi_1(A)=A \;.
\end{align}

\end{definition}

Here are a few examples of their explicit values:
\begin{equation}
\begin{array}{r|rrrrrrr}
n&-1 & 0 & 1 & 2 & 3 & 4  & 5 \\ \hline 
\psi_n(A) & -1 & \mathbf 0 & \mathbf 1 & A & A^2-1 & A^3-A & A^4-3A^2+1  \\ 
\chi_n(A) & A & \mathbf 2 & \mathbf A & A^2-2 & A^3-3A & A^4 - 4A^2+2 & A^5-5A^3 +5A
\end{array}
\label{Eq:LucasExamples}
\end{equation}

Lucas sequences have numerous nice properties; some of these are listed in Lemma~\ref{lemma:LucasElementary} below.

A remarkable classical property of Lucas sequences is that in certain cases they provide the solutions to Pell's equation
\begin{equation}
X^2-dY^2=4
\label{Eq:PellEquation}
\end{equation}
in which $d$ is a positive integer and not a square.

\begin{lemma}[Pell's equation]
\label{lemma:PellEquation}
If $d=A^2-4$, the Pell equation \eqref{Eq:PellEquation}
has the property that its solutions in $\N$ are exactly the pairs $(X,Y)=(\chi_n(A),\psi_n(A))$ for $n\in\Z$.
\end{lemma}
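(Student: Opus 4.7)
The plan is to prove the two inclusions separately. For the forward direction---that each $(\chi_n(A), \psi_n(A))$ solves Pell---the cleanest route is a closed form. Let $\alpha, \beta$ be the roots of $t^2 - At + 1 = 0$, so $\alpha + \beta = A$, $\alpha\beta = 1$, and $(\alpha - \beta)^2 = A^2 - 4 = d$. By matching initial values and the common recurrence, $\chi_n = \alpha^n + \beta^n$ and $\psi_n = (\alpha^n - \beta^n)/(\alpha - \beta)$; then
\[
\chi_n^2 - d\,\psi_n^2 \;=\; (\alpha^n + \beta^n)^2 - (\alpha^n - \beta^n)^2 \;=\; 4(\alpha\beta)^n \;=\; 4.
\]
A direct induction on the recurrences (base cases $n = 0, 1$, inductive step using $\chi_{n+1} = A\chi_n - \chi_{n-1}$ and the analogous relation for $\psi$) is an equivalent route that avoids introducing irrationalities and is more amenable to formalization.

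For the converse---that every nonnegative integer solution arises this way---I would argue by descent. Given $(X, Y) \in \N^2$ with $Y \geq 1$ and $X^2 - dY^2 = 4$, set
\[
(X', Y') \;:=\; \Bigl( \tfrac{AX - dY}{2},\ \tfrac{AY - X}{2} \Bigr).
\]
At the level of $X + Y\sqrt{d}$, this is multiplication by $(A - \sqrt{d})/2 = \bar\alpha$, a norm-one unit in the corresponding quadratic order, so the map sends $(\chi_n, \psi_n) \mapsto (\chi_{n-1}, \psi_{n-1})$. I would then verify four claims: (i) $X', Y' \in \Z$; (ii) $X', Y' \geq 0$; (iii) $Y' < Y$; and (iv) $(X')^2 - d(Y')^2 = 4$. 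Iterating yields a strictly decreasing sequence of nonnegative $Y$-values, which must terminate at $Y = 0$, at which point Pell forces $X = 2 = \chi_0(A)$. Unwinding $k$ descent steps recovers $(X, Y) = (\chi_k(A), \psi_k(A))$. The extension to $n \in \Z$ is cosmetic: the symmetries $\chi_{-n} = \chi_n$ and $\psi_{-n} = -\psi_n$ (immediate from running the recurrence backward) show that the nonnegative solutions are already exhausted by $n \geq 0$.

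Items (ii)--(iv) follow readily from $X^2 = 4 + dY^2$ and the multiplicativity of the norm $N(x + y\sqrt{d}) = x^2 - dy^2$. For instance, $AY - X \geq 0$ reduces to $A^2 Y^2 \geq X^2 = 4 + dY^2$, i.e., $4Y^2 \geq 4$; similarly $AX - dY \geq 0$ and $Y' < Y$ (i.e., $X > (A-2)Y$) follow from squaring and applying Pell. The main obstacle is (i), the parity check needed to ensure that $AX - dY$ and $AY - X$ are both even. A quick analysis modulo~$4$ of $X^2 - dY^2 = 4$ splits into two cases: if $A$ is odd then $d \equiv 1 \pmod{4}$ and hence $X \equiv Y \pmod{2}$; if $A$ is even then $d \equiv 0 \pmod{4}$ and hence $X$ is even. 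Either case yields $AX \equiv dY$ and $AY \equiv X$ modulo~$2$, as required. I expect this parity bookkeeping, together with carefully handling the indexing in the descent (especially covering $n < 0$ via $\chi_{-n} = \chi_n$), to be the fiddly part---routine but error-prone, exactly the sort of step where the parallel formalization is most valuable.
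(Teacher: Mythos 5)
The paper does not prove this lemma; it declares the result ``well known'' and simply cites the literature (Lehmer, Matiyasevich--Robinson), so there is no in-text argument to compare against. Your Fermat-style descent is the standard classical proof of this fact and is correct: the closed form (or the equivalent induction) gives the forward inclusion, the conjugate unit $(A-\sqrt d)/2$ gives the descent map, and the four checks (integrality, nonnegativity, strict decrease of $Y$, norm preservation) are exactly what is needed, with the parity analysis modulo~$2$ handling the one genuinely delicate step. One small point worth making explicit: the positivity and decrease estimates---$AY\ge X$ via $A^2Y^2\ge X^2$, and $(A-2)Y<X$---tacitly assume $A\ge 3$. This is consistent with the surrounding text, which poses \eqref{Eq:PellEquation} only for $d$ positive and non-square (equivalently $|A|\ge 3$), and the case $A\le -3$ reduces to $A\ge 3$ by the sign symmetries of $\chi_n$ and $\psi_n$ in $A$ from Lemma~\ref{lemma:LucasElementary}\eqref{item:LucasSymmetryA}. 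That bookkeeping aside, the argument is complete.
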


This result is well known~\cite{Leh28,MR75}.

\begin{corollary}
\label{cor:PellEquation}
For given integers $A$ and $X$, 
\[
(A^2-1)X^2+1 \in \square \Longleftrightarrow \exists m\in \Z:\, X=\psi_m(2A).
\]
\end{corollary}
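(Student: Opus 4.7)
The plan is to recognize the condition $(A^2-1)X^2+1 \in \square$ as a disguised form of the Pell equation \eqref{Eq:PellEquation} with parameter $A' = 2A$, for which $d = A'^2 - 4 = 4(A^2-1)$. Indeed, if $Y \geq 0$ satisfies $Y^2 = (A^2-1)X^2+1$, then multiplying by $4$ gives $(2Y)^2 - 4(A^2-1)X^2 = 4$, which is exactly \eqref{Eq:PellEquation} applied to $A'=2A$ with the roles of the two unknowns swapped.

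For the backward implication, I would start from $X = \psi_m(2A)$ and invoke Lemma~\ref{lemma:PellEquation} with $A'=2A$, which guarantees that $(\chi_m(2A), \psi_m(2A))$ solves \eqref{Eq:PellEquation}. Rearranging gives $(A^2-1)X^2 + 1 = (\chi_m(2A)/2)^2$. A small side check is needed to see that $\chi_m(2A)/2 \in \Z$: the recurrence $\chi_{n+1}(2A) = 2A\,\chi_n(2A) - \chi_{n-1}(2A)$ together with $\chi_0 = 2$ and $\chi_1 = 2A$ shows by induction (in both directions of $n$) that every $\chi_n(2A)$ is even.

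For the forward implication, I would pick $Y \geq 0$ with $Y^2 = (A^2-1)X^2 + 1$ and note that, since only $X^2$ appears and $\psi_{-m}(2A) = -\psi_m(2A)$ (immediate from the recurrence together with $\psi_0 = 0$, $\psi_1 = 1$), we may assume $X \geq 0$. Then $(2Y, X) \in \N^2$ is a solution of \eqref{Eq:PellEquation} for $A' = 2A$, so Lemma~\ref{lemma:PellEquation} provides $n \in \Z$ with $(2Y, X) = (\chi_n(2A), \psi_n(2A))$, yielding the desired $m = n$.

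I do not foresee any real obstacle here; the whole argument is essentially the change of variables $(A', X', Y') = (2A, 2Y, X)$ converting $X'^2 - (A'^2-4)Y'^2 = 4$ into $Y^2 - (A^2-1)X^2 = 1$. The only bookkeeping items are the evenness of $\chi_m(2A)$ to legitimize the factor of $2$ in the backward direction, and the identity $\psi_{-m} = -\psi_m$ to dispose of the sign of $X$ in the forward direction.
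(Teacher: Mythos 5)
Your proof is correct and is exactly the change of variables $(A',X',Y')=(2A,2Y,X)$ that the paper leaves implicit by stating this as an immediate corollary of Lemma~\ref{lemma:PellEquation}; the two bookkeeping points you flag (evenness of $\chi_m(2A)$, which is Lemma~\ref{lemma:LucasElementary}~\eqref{item:ChiEven}, and $\psi_{-m}=-\psi_m$, which is Lemma~\ref{lemma:LucasElementary}~\eqref{item:LucasNegative}) are precisely the right ones. One small remark: the preamble to Lemma~\ref{lemma:PellEquation} frames the Pell equation for $d>0$ and non-square, which in your application $d=(2A)^2-4$ means $|A|\ge 2$; the corollary nonetheless holds for $|A|\le 1$ (one checks directly that $\psi_m(0)\in\{-1,0,1\}$ and $\psi_m(\pm 2)=\pm m$ give exactly the $X$ for which $(A^2-1)X^2+1$ is a square), so if you want to cover all integers $A$ literally you would either note this edge case or observe that the algebraic identity $\chi_m^2-(A^2-4)\psi_m^2=4$ behind the lemma holds for every $A\in\Z$.
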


\begin{lemma}[Elementary properties of Lucas sequences] 
\label{lemma:LucasElementary}
The Lucas sequences $\psi_n(A)$ and $\chi_n(A)$ have the following properties:
\begin{enumerate}
\item
strict monotonicity: if $A>1$ then $\psi_{n+1}(A)>\psi_n(A)$ for all $n \ge 0$.
\label{item:LucasMonotonicity}
\item
exponential growth: $(A-1)^n<\psi_{n+1}(A)<A^n$ for $n \ge 2$ and $A>1$.
\label{item:LucasGrowth}
\item
symmetry in $A$: 
 $\psi_n(-A)=(-1)^{n+1}\psi_n(A)$ and $\chi_n(-A) = (-1)^n \chi_n(A)$.
\label{item:LucasSymmetryA}
\item 
symmetry in $n$: 
$\psi_{-n}(A)=-\psi_n(A)$ and $\chi_{-n}(A) = \chi_n(A)$.
\label{item:LucasNegative}
\item 
consecutive elements are coprime: 
$\mathrm{gcd}(\psi_{n}(A), \psi_{n+1}(A)) = 1$ for $n\geq 0$.
\label{item:LucasConsecCoprime}
\item
special behavior for $A=2$: 
$\psi_n(2)=n$ and $\chi_n(2)=2$ for all $n$.
\label{item:LucasCoeff2}
\item
law of apparition: for every $A$ and every $N>0$ there are infinitely many $n$ so that $N|\psi_n(A)$.
\label{item:LucasApparitionRepetition}
\end{enumerate}
Moreover, the Lucas sequences $\psi_n(A)$ and $\chi_n(A)$ have the following properties. Abbreviate $\alpha = \sqrt{A^2 - 4} \in \mathbb{R}$.
\begin{enumerate}[resume]
\item
if $A$ is even, then $\psi_n(A)$ has the same parity as $n$, so $\psi_n(A)-n$ is even.
\label{item:LucasEven}
\item
if $A$ is even, then $\chi_n(A)$ is even.
\label{item:ChiEven}
\item 
if $A > 2$, then $\alpha \cdot 2^n \psi_n(A) = (A+\alpha)^n - (A-\alpha)^n$ in the reals.
\label{item:PsiClosedForm}
\item 
if $A > 2$, then $2^n \chi_n(A) = (A+\alpha)^n + (A-\alpha)^n$.
\label{item:ChiClosedForm}
\item
if $A > 2$, then $\psi_{n+1}(A) > \frac{5}{2} \cdot \psi_n(A)$.
\label{item:psibound}
\item
if $A > 2$, then $\chi_n(A) > \sqrt{5} \cdot \psi_n(A)$.
\label{item:ChiPsiBound}
\item
doubling: $2\psi_n(A) = A\psi_{n+1}(A) - \chi_{n+1}(A)$.
\label{item:ChiPsiRecursion}
\item
index shift: $\psi_{n+r}(A) = \psi_r(A) \chi_n(A) + \psi_{n-r}(A)$.
\label{item:PsiIndexShift}
\end{enumerate}
\end{lemma}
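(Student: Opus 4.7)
My plan is to organize the fifteen items around two foundational results — the closed forms (items~(\ref{item:PsiClosedForm}) and~(\ref{item:ChiClosedForm})) and the Pell identity of Lemma~\ref{lemma:PellEquation} — from which most of the other properties fall out cleanly, with a handful of straightforward inductions handling the rest.

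\textbf{Closed forms as the foundation.} I would first establish items~(\ref{item:PsiClosedForm}) and~(\ref{item:ChiClosedForm}) by solving the linear recursion $x_{n+1}=Ax_n-x_{n-1}$ via its characteristic polynomial $t^2-At+1$, whose roots are $r_{\pm}=(A\pm\alpha)/2$. For $A>2$ these are distinct reals, so every solution has the shape $c_+r_+^n+c_-r_-^n$; fitting the initial conditions $(0,1)$ and $(2,A)$ respectively yields the stated formulas. Setting $u:=A+\alpha$, $v:=A-\alpha$ (so $uv=4$), every subsequent algebraic identity reduces to a one-line calculation in $u,v$.

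\textbf{Symmetries, parities, and the trivial case.} Items~(\ref{item:LucasSymmetryA}) and~(\ref{item:LucasNegative}) follow from uniqueness of solutions to the recurrence: one checks that the claimed formulas satisfy the correct recursion (note the recursion is symmetric in shifting $n\mapsto-n$ once rewritten as $x_{n-1}=Ax_n-x_{n+1}$) with matching initial values. Item~(\ref{item:LucasCoeff2}) is immediate by induction since $\psi_{n+1}=2\psi_n-\psi_{n-1}$ with $(\psi_0,\psi_1)=(0,1)$ produces $n$, and analogously $\chi_n$ stays at $2$. Items~(\ref{item:LucasEven}) and~(\ref{item:ChiEven}) drop out by reducing the recurrence modulo $2$: when $A$ is even, $\psi_{n+1}\equiv-\psi_{n-1}\pmod 2$ propagates the parity pattern $0,1,0,1,\dots$, while both initial values of $\chi$ are even.

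\textbf{Coprimality, monotonicity, and growth.} For item~(\ref{item:LucasConsecCoprime}), the backwards recurrence $\psi_{n-1}=A\psi_n-\psi_{n+1}$ shows $\gcd(\psi_n,\psi_{n+1})$ divides $\gcd(\psi_0,\psi_1)=1$. Items~(\ref{item:LucasMonotonicity}) and~(\ref{item:LucasGrowth}) are direct inductions from the recurrence once positivity is established. For item~(\ref{item:psibound}) with $A\ge 3$, I would prove by induction that $\psi_{n+1}/\psi_n\ge 5/2$: the base case is $\psi_2/\psi_1=A\ge 3$, and the inductive step uses
\[ \frac{\psi_{n+1}}{\psi_n}=A-\frac{\psi_{n-1}}{\psi_n}\ge A-\tfrac{1}{2}\ge \tfrac{5}{2}, \]
since the hypothesis $\psi_n/\psi_{n-1}>2$ gives $\psi_{n-1}/\psi_n<1/2$. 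For item~(\ref{item:ChiPsiBound}), I invoke the Pell identity from Lemma~\ref{lemma:PellEquation}: substituting $(X,Y)=(\chi_n,\psi_n)$ yields $\chi_n^2=(A^2-4)\psi_n^2+4$, and for $A\ge 3$ we have $A^2-4\ge 5$, so $\chi_n^2>5\psi_n^2$.

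\textbf{Algebraic identities and law of apparition.} Items~(\ref{item:ChiPsiRecursion}) and~(\ref{item:PsiIndexShift}) are proved by direct expansion in terms of $u,v$: for the index shift, for instance, $(u^r-v^r)(u^n+v^n)=(u^{n+r}-v^{n+r})+(uv)^n(v^{r-n}-u^{r-n})$, which together with $uv=4$ and the sign-flip symmetry of item~(\ref{item:LucasNegative}) gives $\psi_r\chi_n=\psi_{n+r}-\psi_{n-r}$. For the law of apparition (item~(\ref{item:LucasApparitionRepetition})), the case $A=2$ is trivial via item~(\ref{item:LucasCoeff2}); for general $A$, apply pigeonhole to the pairs $(\psi_n\bmod N,\psi_{n+1}\bmod N)$ in the finite set $(\Z/N\Z)^2$. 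Some pair repeats, and because the recurrence is reversible (each term is determined by any two consecutive ones), the sequence of pairs is \emph{purely} periodic; hence there exists $p>0$ with $(\psi_p,\psi_{p+1})\equiv(0,1)\pmod N$, whence $N\mid\psi_{kp}$ for every $k\ge 0$.

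I expect the law of apparition to be the main subtlety, since it requires the reversibility-to-pure-periodicity argument rather than a direct manipulation; the rest is essentially bookkeeping around the closed forms and the Pell identity.
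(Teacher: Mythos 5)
The paper does not give a proof of this lemma; it writes ``All of these properties are proven by induction and are well known'' and cites the literature. Your proposal is therefore an independent development rather than a reconstruction of the paper's argument, and it is essentially sound: the organization around the closed forms (items~(\ref{item:PsiClosedForm}),~(\ref{item:ChiClosedForm})) and the Pell identity of Lemma~\ref{lemma:PellEquation} is a clean way to discharge most of the items, and your law-of-apparition argument (pure periodicity of the pair sequence modulo $N$ via reversibility of the order-two recurrence) is the standard correct one.

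Two points deserve attention. First, items~(\ref{item:ChiPsiRecursion}) and~(\ref{item:PsiIndexShift}) are asserted for \emph{all} $A$, but the manipulation in $u=A+\alpha$, $v=A-\alpha$ only literally applies when $A>2$ (for $\abs{A}<2$ the roots are complex, and for $A=\pm2$ they coincide, so the $\psi$-formula degenerates). To cover every $A$ you must add one of the following: observe that for fixed $n,r$ both sides of each identity are polynomials in $A$, so agreement for the infinitely many $A>2$ forces agreement as polynomials (then extend to negative indices via item~(\ref{item:LucasNegative})); or run the inductions the paper alludes to, a single induction in $n$ for item~(\ref{item:ChiPsiRecursion}) and a double induction in $n,r$ for item~(\ref{item:PsiIndexShift}). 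Second, for the lower bound in item~(\ref{item:LucasGrowth}) a ``direct induction'' from the recurrence does not close on its own: plugging $(A-1)^n<\psi_{n+1}$ into $\psi_{n+2}=A\psi_{n+1}-\psi_n$ leaves a $\psi_n$ term that you cannot absorb without knowing $\psi_{n+2}>(A-1)\psi_{n+1}$, which is exactly monotonicity (item~(\ref{item:LucasMonotonicity})) combined with the recurrence. So item~(\ref{item:LucasGrowth}) should be derived \emph{after} item~(\ref{item:LucasMonotonicity}), from the chain $\psi_{n+1}>(A-1)\psi_n>\dots>(A-1)^n\psi_1$, rather than from a stand-alone induction. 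These are matters of ordering and of the domain of validity of an algebraic trick, not conceptual gaps; with them spelled out the proposal is complete.
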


All of these properties are proven by induction and are well known~\cite{MR75,9var}. Note that property~\eqref{item:LucasGrowth} implies exponential growth\footnote{This notion of exponential growth is distinct from the ``relation of exponential growth'' introduced by Julia Robinson~\cite{robinson1952}.} only when $A>2$; compare property~\eqref{item:LucasCoeff2}.

\begin{lemma}\label{lemma:LucasSequenceCongruence}
Let $A, A'$ be integers. If there is an $n > 1$ such that $A \equiv A' \pmod n$, then $\psi_m(A) \equiv \psi_m(A') \pmod n$ for every $m \in \Z$.
\end{lemma}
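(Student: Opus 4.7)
The plan is to prove this by straightforward induction on $m$, using the two-term recurrence that defines the Lucas sequence $\psi$. I would first handle all non-negative $m$ by strong induction, and then extend to negative $m$ via the symmetry property $\psi_{-m}(A) = -\psi_m(A)$ from Lemma~\ref{lemma:LucasElementary}\,\eqref{item:LucasNegative}.

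For the base cases $m = 0$ and $m = 1$, the statement is immediate because $\psi_0(A) = 0 = \psi_0(A')$ and $\psi_1(A) = 1 = \psi_1(A')$, independently of the parameter, so the congruences hold trivially modulo any $n$. For the inductive step, I would assume the statement holds for $m-1$ and $m$, and then apply the defining recurrence:
\[
\psi_{m+1}(A) = A \cdot \psi_m(A) - \psi_{m-1}(A).
\]
Because $A \equiv A' \pmod{n}$ by hypothesis, and $\psi_m(A) \equiv \psi_m(A') \pmod{n}$ as well as $\psi_{m-1}(A) \equiv \psi_{m-1}(A') \pmod{n}$ by induction, each factor on the right-hand side is congruent mod $n$ to its primed counterpart, and congruences are preserved under multiplication and subtraction. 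Hence $\psi_{m+1}(A) \equiv A' \cdot \psi_m(A') - \psi_{m-1}(A') = \psi_{m+1}(A') \pmod{n}$.

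To extend to negative $m$, one simply observes that if $m < 0$, writing $m = -m'$ with $m' > 0$, we have $\psi_m(A) = -\psi_{m'}(A) \equiv -\psi_{m'}(A') = \psi_m(A') \pmod{n}$ by the result just proved together with the symmetry in $n$ from Lemma~\ref{lemma:LucasElementary}\,\eqref{item:LucasNegative}. I do not anticipate a genuine obstacle here: the entire argument is a routine parameter-stability induction for a linear recurrence, and the only thing worth being careful about is invoking strong rather than ordinary induction (since the recursion is two-step) and explicitly handling both signs of $m$.
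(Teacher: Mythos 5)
Your proposal is correct and follows exactly the same route as the paper: base cases $m=0,1$, strong induction on $m>1$ via the two-term recurrence, and the symmetry property $\psi_{-m}(A)=-\psi_m(A)$ to handle negative indices. The paper merely states this argument in compressed form; you have supplied the routine details.
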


\begin{proof}
The cases $m = 0, 1$ are immediate. For $m>1$, this follows by induction, and for $m<0$ it follows from symmetry in $m$; see Lemma~\ref{lemma:LucasElementary}~\eqref{item:LucasNegative}.
\end{proof}

A particular case will be especially useful. Note that we allow modular congruence for negative $n < 0$ by saying that two integers $A \equiv A' \pmod{n}$ are congruent if $A \equiv A' \pmod{|n|}$.

\begin{corollary}\label{cor:LucasSequenceCongruence}
Let $|\alpha| > 2$ and $m\in \Z$, then 
 $\psi_m(\alpha)\equiv \psi_m(2) =m \pmod {\alpha-2}$.
\end{corollary}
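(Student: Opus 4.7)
The plan is a direct instantiation of the previous Lemma~\ref{lemma:LucasSequenceCongruence}, combined with part~\eqref{item:LucasCoeff2} of Lemma~\ref{lemma:LucasElementary}. Set $A = \alpha$, $A' = 2$, and take the modulus $n := \alpha - 2$ (to be interpreted as $|\alpha - 2|$ under the convention for negative moduli introduced just above the corollary). The congruence $\alpha \equiv 2 \pmod{\alpha - 2}$ is tautological, so for any $m \in \Z$ Lemma~\ref{lemma:LucasSequenceCongruence} yields $\psi_m(\alpha) \equiv \psi_m(2) \pmod{\alpha - 2}$. Since Lemma~\ref{lemma:LucasElementary}~\eqref{item:LucasCoeff2} gives $\psi_m(2) = m$, the claim follows immediately.

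The only point that requires a moment of care is the hypothesis $n > 1$ in Lemma~\ref{lemma:LucasSequenceCongruence}. Under the assumption $|\alpha| > 2$, if $\alpha < -2$ then $|\alpha - 2| \geq 5$, and if $\alpha > 2$ then $\alpha - 2 \geq 1$, with equality only when $\alpha = 3$. In that edge case the modulus is $1$ and the claimed congruence is vacuous, so the statement holds trivially; in every other case the lemma applies verbatim.

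I do not anticipate any real obstacle here: the result is labelled a corollary precisely because it is an immediate combination of the congruence-propagation lemma with the explicit identity $\psi_m(2) = m$. The only drafting choice is whether to dispatch the $\alpha = 3$ case by a short sentence or to absorb it into the convention on congruence modulo $\pm 1$.
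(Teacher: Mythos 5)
Your proof is correct and is exactly the intended argument: the paper states this as an immediate corollary of Lemma~\ref{lemma:LucasSequenceCongruence} (with $A=\alpha$, $A'=2$, $n=|\alpha-2|$) combined with Lemma~\ref{lemma:LucasElementary}~\eqref{item:LucasCoeff2}, and leaves the proof to the reader. Your handling of the $\alpha=3$ edge case, where the modulus is $1$ and the congruence is vacuous, is a sensible extra precaution.
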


\sunref{\cite[Lemma 2]{SunDFI}}
\begin{lemma}\label{lemma:psi-linear-expansion}
Let $A, n, k, r$ be integers with $n, k, r \geq 0$. Then
    \begin{equation}\label{eq:psi-linear-expansion}
    \psi_{nk+r}(A) = \sum_{i=0}^n {\binom{n}{i}} \cdot (\psi_{k+1}(A) - A \psi_k(A))^{n-i} \cdot \psi_k(A)^i \cdot \psi_{r+i}(A)
    \end{equation}
\end{lemma}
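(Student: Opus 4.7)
The plan is to give a clean matrix proof based on the companion matrix of the Lucas recurrence. Let $M := \left(\begin{smallmatrix} A & -1 \\ 1 & 0 \end{smallmatrix}\right)$, whose characteristic polynomial $x^2 - Ax + 1$ matches the recursion defining $\psi$. A short induction on $n \in \Z$ (extending to negative indices via $M^{-1} = \left(\begin{smallmatrix} 0 & 1 \\ -1 & A \end{smallmatrix}\right)$) establishes
\[
M^n = \begin{pmatrix} \psi_{n+1}(A) & -\psi_n(A) \\ \psi_n(A) & -\psi_{n-1}(A) \end{pmatrix},
\]
so in particular the $(2,1)$-entry of $M^m$ equals $\psi_m(A)$ for every $m \in \Z$.

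By the Cayley--Hamilton theorem, $M^2 = AM - I$, and a second induction on $k \geq 0$ upgrades this to the Fibonacci-style identity
\[
M^k \;=\; \psi_k(A)\,M - \psi_{k-1}(A)\,I.
\]
The base cases are immediate ($M^0 = I = -\psi_{-1}(A)\,I$ since $\psi_{-1}(A)=-1$, and $M^1 = M$), and the inductive step is just the Lucas recurrence $\psi_{k+1}(A) = A\psi_k(A) - \psi_{k-1}(A)$ applied entry-wise.

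With this identity in place, I would finish by raising both sides to the $n$-th power. Since $\psi_k(A)\,M$ and $\psi_{k-1}(A)\,I$ commute, the binomial theorem applies and gives
\[
M^{nk} \;=\; \bigl(\psi_k(A)\,M - \psi_{k-1}(A)\,I\bigr)^n \;=\; \sum_{i=0}^{n} \binom{n}{i} \,\psi_k(A)^i \,\bigl(-\psi_{k-1}(A)\bigr)^{n-i} M^i.
\]
Multiplying on the right by $M^r$ and reading off the $(2,1)$-entry of both sides yields
\[
\psi_{nk+r}(A) \;=\; \sum_{i=0}^{n} \binom{n}{i} \,\psi_k(A)^i \,\bigl(-\psi_{k-1}(A)\bigr)^{n-i} \psi_{r+i}(A).
\]
A final application of the Lucas recurrence rewrites $-\psi_{k-1}(A) = \psi_{k+1}(A) - A\,\psi_k(A)$, producing exactly the form stated in \eqref{eq:psi-linear-expansion}.

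The main obstacle is nothing deeper than careful bookkeeping: verifying the closed form for $M^n$ and the Cayley--Hamilton-style expansion $M^k = \psi_k(A)\,M - \psi_{k-1}(A)\,I$, both of which are straightforward inductions once the right formulas are written down. There are no positivity, growth, or irrationality hypotheses needed, since the entire argument is a universal polynomial identity in the parameter $A$; this is why the lemma's hypotheses are only $n,k,r \ge 0$. An alternative route would use the Binet-style closed form from Lemma~\ref{lemma:LucasElementary}~\eqref{item:PsiClosedForm} for $A>2$ together with $((A\pm\sqrt{A^2-4})/2)^k = \psi_k(A)\cdot(A\pm\sqrt{A^2-4})/2 - \psi_{k-1}(A)$, then extend by polynomial identity to all $A$, but the matrix proof avoids any case split on $A$.
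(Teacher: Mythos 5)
Your matrix proof is correct and takes a genuinely different route from the paper's. The paper proves the $n=1$ case $\psi_{k+r} = (\psi_{k+1} - A\psi_k)\psi_r + \psi_k\psi_{r+1}$ directly by induction on $k$, and then proves \eqref{eq:psi-linear-expansion} by a second induction on $n$, using the $n=1$ identity in the inductive step together with the Pascal recurrence $\binom{n+1}{i} = \binom{n}{i} + \binom{n}{i-1}$. You instead package the recurrence into the companion matrix $M$, reduce everything to the single identity $M^k = \psi_k(A)\,M - \psi_{k-1}(A)\,I$ (verifiable in one line from the closed form of $M^n$), and then let the binomial theorem in the commutative ring $\Z[M]$ do all the work. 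The trade-off: the paper's proof is self-contained and never leaves the territory of the Lucas recurrence, at the cost of a nested induction and a somewhat opaque algebraic manipulation; your proof exposes the conceptual content — that \eqref{eq:psi-linear-expansion} is nothing but $(M^k)^n M^r$ expanded binomially in the basis $\{I, M\}$ — at the mild cost of introducing a $2\times 2$ matrix. Both are universal polynomial identities in $A$, and neither needs any positivity or growth hypothesis, as you correctly observe. All the individual claims you make check out: $\psi_{-1}(A)=-1$ makes the $n=0$ base case of the closed form for $M^n$ work, $\det M = 1$ gives invertibility over $\Z$, and $-\psi_{k-1} = \psi_{k+1} - A\psi_k$ is immediate from the recurrence.
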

\begin{proof}
In this proof, the parameter of the Lucas sequence is always $A$ so we omit it for a more compact presentation (writing $\psi_k$ for $\psi_k(A)$). The proof will proceed by induction on $n$. To start, we prove the following statement which will be useful in the inductive step (this is also the lemma statement for $n=1$).
\begin{equation} \label{eq:psi-n-1} \psi_{k+r} = (\psi_{k+1} - A \psi_k) \cdot \psi_r + \psi_k \cdot \psi_{r+1}
\end{equation}
We can prove this identity by induction on $k$. The base cases $k=0$ and $k=1$ can be checked directly from the recursive definition of $\psi$. For the induction step, assume that Equation~\eqref{eq:psi-n-1} holds for $k$ and $k+1$. We then obtain
\[ \begin{split}
    \psi_{k+2+r} &= A\psi_{k+1+r} - \psi_{k+r} \\
    &= A\left( (\psi_{k+2}-A\psi_{k+1})\psi_r + \psi_{k+1}\psi_{r+1} \right) 
        - \left( (\psi_{k+1}-A\psi_{k})\psi_r + \psi_{k}\psi_{r+1} \right) \\
    &= \left( A (\psi_{k+2}-A\psi_{k+1}) - (\psi_{k+1}-A\psi_{k}) \right)\psi_r 
    + (A\psi_{k+1} - \psi_k)\psi_{r+1}\\
    &= \left( A\psi_{k+2}-\psi_{k+1} - A(A\psi_{k+1}-\psi_{k}) \right)\psi_r 
    + \psi_{k+2}\psi_{r+1}\\
    &= (\psi_{k+3}-A\psi_{k+2})\psi_r 
    + \psi_{k+2}\psi_{r+1}
\end{split} \]
which is \eqref{eq:psi-n-1} for $k+2$.

We can now prove \eqref{eq:psi-linear-expansion} by induction. For $n=0$ the statement is trivial since there is only one summand. We proceed with the inductive step, assuming \eqref{eq:psi-linear-expansion} for $n$. One can then calculate 
\begin{align*}
\psi_{(n+1)k+r} 
=& \psi_{(nk+r)+k} 
\overset{\eqref{eq:psi-n-1}}{=} (\psi_{nk+r+1} - A\psi_{nk+r}) \psi_k + \psi_{nk+r} \psi_{k+1} \\
=& (\psi_{k+1} - A\psi_k)\psi_{nk+r} + \psi_k \psi_{nk+(r+1)} \\
\overset{\text{IH}}{=}& (\psi_{k+1} - A\psi_k)\left(\sum_{i=0}^n \binom{n}{i} (\psi_{k+1} - A \psi_k)^{n-i} \psi_k^i \psi_{r+i} \right) \\
&+ \sum_{i=0}^n \binom{n}{i} (\psi_{k+1} - A \psi_k)^{n-i} \psi_k^{i + 1} \psi_{r+1+i} \\
=& \sum_{i=0}^n \binom{n}{i} (\psi_{k+1} - A \psi_k)^{n+1-i} \psi_k^i \psi_{r+i}  
+ \sum_{i=1}^{n+1} \binom{n}{i-1} (\psi_{k+1} - A \psi_k)^{n+1-i} \psi_k^{i} \psi_{r+i} \\
=& \sum_{i=0}^{n+1} \binom{n+1}{i} (\psi_{k+1} - A \psi_k)^{n+1-i} \psi_k^{i} \psi_{r+i}
\end{align*}
where the last step follows because $\binom{n+1}{i} = \binom{n}{i} + \binom{n}{i-1}$.
\end{proof}

\begin{lemma}
\label{lemma:sun7} \sunref{\cite[Lemma 3.7]{Sun}}
    Let $A,k,m$ be integers with $|A|\ge 2$, $k>0$ and assume that $\psi_k(A)^2|\psi_m(A)$. Then $\psi_k(A)|m$.
\end{lemma}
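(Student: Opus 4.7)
The plan is to apply the linear expansion of Lemma~\ref{lemma:psi-linear-expansion} and interpret it successively modulo $\psi_k(A)$ and modulo $\psi_k(A)^2$. By the symmetry $\psi_{-m}(A) = -\psi_m(A)$ from Lemma~\ref{lemma:LucasElementary}\eqref{item:LucasNegative} we may assume $m \ge 0$, and write $m = nk + r$ with integers $n \ge 0$ and $0 \le r < k$. The defining recurrence gives $\psi_{k+1}(A) - A\psi_k(A) = -\psi_{k-1}(A)$, so Lemma~\ref{lemma:psi-linear-expansion} reads
\begin{equation*}
\psi_m(A) \;=\; \sum_{i=0}^{n} \binom{n}{i} (-\psi_{k-1}(A))^{n-i} \, \psi_k(A)^i \, \psi_{r+i}(A).
\end{equation*}

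Reducing modulo $\psi_k(A)$ annihilates every term with $i \ge 1$, leaving $\psi_m(A) \equiv (-\psi_{k-1}(A))^n \psi_r(A) \pmod{\psi_k(A)}$. Since $\psi_k(A) \mid \psi_m(A)$ and $\gcd(\psi_{k-1}(A), \psi_k(A)) = 1$ by Lemma~\ref{lemma:LucasElementary}\eqref{item:LucasConsecCoprime}, we conclude $\psi_k(A) \mid \psi_r(A)$. Under $|A| \ge 2$, combining Lemma~\ref{lemma:LucasElementary}\eqref{item:LucasMonotonicity}, \eqref{item:LucasCoeff2}, and \eqref{item:LucasSymmetryA} shows that $|\psi_n(A)|$ is strictly increasing in $n \ge 0$, so $0 \le r < k$ yields $|\psi_r(A)| < |\psi_k(A)|$. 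This forces $\psi_r(A) = 0$, and hence $r = 0$.

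With $r = 0$ we have $\psi_r(A) = 0$ and $\psi_{r+1}(A) = 1$, so reducing the same expansion modulo $\psi_k(A)^2$ retains only the $i = 1$ term:
\begin{equation*}
\psi_m(A) \;\equiv\; n \, (-\psi_{k-1}(A))^{n-1} \, \psi_k(A) \pmod{\psi_k(A)^2}.
\end{equation*}
Dividing through by $\psi_k(A)$, the hypothesis $\psi_k(A)^2 \mid \psi_m(A)$ gives $\psi_k(A) \mid n(-\psi_{k-1}(A))^{n-1}$, and coprimality of consecutive Lucas terms upgrades this to $\psi_k(A) \mid n$. Since $m = nk$, we conclude $\psi_k(A) \mid m$ as required; the degenerate case $n = 0$ forces $m = r = 0$ after the first reduction, where the conclusion is trivial. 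The main obstacle is really just the bookkeeping around the two successive modular reductions and the uniform treatment of the monotonicity claim across the sign cases $A = \pm 2$ (handled by $\psi_n(\pm 2) = \pm n$) and $|A| > 2$ (handled by monotonicity combined with the symmetry in $A$).
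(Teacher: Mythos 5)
Your proof is correct and follows the same two-step strategy as the paper: apply the linear expansion of Lemma~\ref{lemma:psi-linear-expansion}, reduce modulo $\psi_k(A)$ to force $r=0$ via coprimality and monotonicity, then reduce modulo $\psi_k(A)^2$ to extract $\psi_k(A)\mid n$. The only differences are cosmetic—you rewrite $\psi_{k+1}(A)-A\psi_k(A)$ as $-\psi_{k-1}(A)$ via the recurrence, and you make explicit the reduction to $m\ge 0$ and the degenerate case $n=0$, both of which the paper leaves implicit.
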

\begin{proof}
    First, we will prove that if $\psi_k(A)|\psi_m(A)$ then $k|m$. Write $m = nk + r$ with $r < k$. By Lemma~\ref{lemma:psi-linear-expansion} we have
    \begin{equation}\label{eq:psi-linear-expansion2}
    \psi_{nk+r}(A) = \sum_{i=0}^n {\binom{n}{i}} \cdot (\psi_{k+1}(A) - A \psi_k(A))^{n-i} \cdot \psi_k(A)^i \cdot \psi_{r+i}(A)
    \end{equation}
    so that only the $i=0$ term survives after reduction modulo $\psi_k(A)$. Hence
    \begin{align*}
    \psi_{m}(A) = \psi_{nk+r}(A) &\equiv (\psi_{k+1}(A) - A \psi_k(A))^n \cdot \psi_{r}(A) \pmod{\psi_k(A)} \\
    &\equiv \psi_{k+1}(A)^n \cdot \psi_{r}(A) \pmod{\psi_k(A)}
    \end{align*}
    Using the fact that consecutive elements of the Lucas sequence $\psi_k(A),\psi_{k+1}(A)$ are coprime (\cref{lemma:LucasElementary} \eqref{item:LucasConsecCoprime}), we conclude that $\psi_{r}(A) \equiv \psi_{m}(A) \equiv 0 \pmod{\psi_k(A)}$ using the assumption. But $r < k$ directly implies $\psi_{r}(|A|) < \psi_k(|A|)$ by strict monotonicity (\cref{lemma:LucasElementary} \eqref{item:LucasMonotonicity}), which means that $|\psi_{r}(A)| < |\psi_k(A)|$. So we have proven $|\psi_r(A)| = 0$ which requires $r = 0$.

    Now, revisit equation~\eqref{eq:psi-linear-expansion2} but reduce modulo $\psi_k(A)^2$ instead. The $i=1$ term initially survives, which yields
    \[
    0 \equiv \psi_m(A) \equiv n (\psi_{k+1}(A) - A \psi_k(A))^{n-1} \cdot \psi_k(A) \cdot 1 \pmod{\psi_k(A)^2}
    \]
    This directly implies that $0 \equiv n \cdot \psi_{k+1}(A)^{n-1} \pmod{\psi_k(A)}$. Again using that consecutive elements of the Lucas sequence $\psi_k(A),\psi_{k+1}(A)$ are coprime, we find $\psi_k(A) | n$. 
    Finally, $m = nk$ is a multiple of $n$ so we conclude that $\psi_k(A) | m$.
\end{proof}

\begin{lemma}
\label{lemma:psiInjective}
Let $A,n,k$ be integers with $A > 2$, $n > 3$, such that $\chi_{n}(A) = 2k$. Then the residue map $\Psi : [-n,+n] \rightarrow \mathbb{Z}/k\mathbb{Z}$ defined by $\Psi(i) = (\psi_i(A) \!\mod k)$ is injective.
\end{lemma}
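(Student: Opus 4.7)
The plan is to assume $\psi_i \equiv \psi_j \pmod{k}$ for distinct $i, j \in [-n, n]$ and derive a contradiction. Taking WLOG $i > j$, the strict monotonicity of $\psi$ on $\Z$ (which follows from Lemma~\ref{lemma:LucasElementary}~\eqref{item:LucasMonotonicity} and the symmetry~\eqref{item:LucasNegative}) gives $0 < \psi_i - \psi_j \le 2\psi_n$, and the bound $\chi_n > \sqrt{5}\,\psi_n > 2\psi_n$ from~\eqref{item:ChiPsiBound} gives $\psi_i - \psi_j < \chi_n = 2k$. Hence $k \mid \psi_i - \psi_j$ forces the equation $\psi_i - \psi_j = k$, equivalently $2(\psi_i - \psi_j) = \chi_n$.

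I then split on the signs of $i$ and $j$. If $i, j$ have the same sign, then $|\psi_i - \psi_j| \le \psi_n < k$ (using $\chi_n > 2\psi_n$ once more), contradicting $\psi_i - \psi_j = k$. If $i = -j$, the equation reads $4\psi_i = \chi_n$: for $i < n$, iterating~\eqref{item:psibound} gives $\psi_n \ge (5/2)^{n-i}\psi_i \ge (5/2)\psi_i$, so that~\eqref{item:ChiPsiBound} yields $\chi_n > \sqrt{5}\,\psi_n \ge (5\sqrt{5}/2)\psi_i > 4\psi_i$; for $i = n$, substituting into Pell's equation (Lemma~\ref{lemma:PellEquation}) gives $(20 - A^2)\psi_n^2 = 4$, which has no integer solution when $A \ge 5$ or $A = 3$ and forces $\psi_n = 1$ (hence $n = 1$) when $A = 4$, in each case contradicting $n > 3$.

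The substantive case is $i > 0 > j$ with $i \ne -j$; writing $s := -j \in [1, n]$, it remains to preclude $\psi_i + \psi_s = k$. If both $i, s \le n - 1$, then $\psi_i + \psi_s \le 2\psi_{n-1}$, and the inequality $2\psi_{n-1} < k$ reduces via $\chi_n = A\psi_n - 2\psi_{n-1}$ to $A\psi_n > 6\psi_{n-1}$, which follows from $A \ge 3$ and $\psi_n/\psi_{n-1} > 5/2$ by~\eqref{item:psibound}. So WLOG $i = n$ and $s \le n - 1$, whence $\psi_s = ((A-2)\psi_n - 2\psi_{n-1})/2$. This final subcase is the main obstacle and requires a split on $A$. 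For $A \ge 4$, $(A-2)\psi_n \ge 2\psi_n > 4\psi_{n-1}$ (from $\psi_n > (5/2)\psi_{n-1} > 2\psi_{n-1}$) gives $\psi_s > \psi_{n-1}$, contradicting $s \le n - 1$ by strict monotonicity. For $A = 3$, the parity of $\chi_n$ forces $3 \mid n$ and hence $n \ge 6$; the recursion $\psi_n = 3\psi_{n-1} - \psi_{n-2}$ simplifies the target to $\psi_s = (\psi_{n-1} - \psi_{n-2})/2$, which I would finish off by sandwiching it strictly between $\psi_{n-3}$ and $\psi_{n-2}$: the upper bound $\psi_{n-1} < 3\psi_{n-2}$ comes from the recursion (since $\psi_{n-3} > 0$), while rewriting the numerator as $2\psi_{n-2} - \psi_{n-3}$ and applying $\psi_{n-2} > (3/2)\psi_{n-3}$ from~\eqref{item:psibound} yields the lower bound. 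Strict monotonicity then rules out the existence of any such integer $s$.
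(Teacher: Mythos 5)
Your proof is correct and complete. It follows the same overall strategy as the paper (case analysis on the signs and magnitudes of $i,j$, using \eqref{item:LucasMonotonicity}, \eqref{item:psibound}, \eqref{item:ChiPsiBound}, and the Pell equation), but the treatment of the hard subcase is genuinely different. The paper, for $i=-n$ and $0<j<n$, splits on $j$ into $j\le n-3$, $j=n-2$, $j=n-1$ and rules out the latter two by substituting into $\chi_n^2-(A^2-4)\psi_n^2=4$ and factoring; this produces two separate algebraic contradictions. You instead first establish, via the doubling identity \eqref{item:ChiPsiRecursion}, that $\psi_s=\bigl((A-2)\psi_n-2\psi_{n-1}\bigr)/2$, and then split on $A$: for $A\ge 4$ the growth bound \eqref{item:psibound} immediately gives $\psi_s>\psi_{n-1}$; for $A=3$ you invoke a parity argument ($\chi_n$ even for $A=3$ forces $3\mid n$, hence $n\ge 6$) and a sandwich $\psi_{n-3}<\psi_s<\psi_{n-2}$. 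This avoids the two ad~hoc Pell-equation manipulations at the cost of one parity observation, and is arguably more systematic. Your opening reduction — that the congruence together with $0<\psi_i-\psi_j\le 2\psi_n<2k$ forces the single equation $\psi_i-\psi_j=k$ — is also a cleaner organizing device than the paper's case-by-case bounding.

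Two small points worth tightening in a final write-up: in the $i=-j$, $i=n$ subcase, say explicitly that $\psi_n=1$ forces $n=1$ because $\psi_1=1$ and $\psi$ is strictly increasing on $\N$ for $A>1$; and in the $A=3$ case, note (or at least implicitly use) that $n-3\ge 3\ge 1$ so both ends of the sandwich are at positive indices where strict monotonicity applies.
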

\begin{proof}
    We will show that for two integers $i, j \in [-n, +n]$ with $i \neq j$, we have $\psi_{i}(A) \not\equiv \psi_j(A) \pmod{k}$ by distinguishing four cases.

    \begin{enumerate}[label={(\roman*)}]
        \item $0 \leq i < j \leq n$. Recall that $\psi_0(A) = 0$. We find that 
        \[
            0 \leq \psi_i(A) < \psi_j(A) \leq \psi_n(A) < \chi_n(A)/2 = k
        \]
        using the elementary properties in \cref{lemma:LucasElementary} \eqref{item:LucasMonotonicity} and \eqref{item:ChiPsiBound}.
        Flipping the signs of this chain of inequalities gives the same result for $-n \leq j < i \leq 0$.
        
        \item $-n < i < 0 < j < n$. In this case of $i$ and $j$ with opposite signs, we want to show that $\psi_j(A) - \psi_i(A) \not\equiv 0 \pmod{k}$. Indeed,
        \[
            \qquad\qquad 0 < \psi_j(A) - \psi_i(A) = \psi_j(A) + \psi_{|i|}(A) \leq 2 \psi_{n-1}(A) < \psi_n(A) < k, 
        \]
        using \cref{lemma:LucasElementary}~\eqref{item:psibound}.

        \item $i = -n$ and $0 < j < n$. Use \cref{lemma:LucasElementary} \eqref{item:LucasMonotonicity}, \eqref{item:psibound}, \eqref{item:ChiPsiBound} and \eqref{item:ChiPsiRecursion}. This time, we find $0 < \psi_j(A) - \psi_{-n}(A) < 2\psi_n(A) < 2k$.
        \begin{enumerate}[label={\greek*)}]
            \item $j \leq n-3$. Then, 
            \begin{align*}
                \psi_j(A) - \psi_{-n}(A) &< \psi_{n-3}(A) + \psi_n(A) \\
                &< \left(\tfrac{8}{125} + 1\right) \psi_n(A) < \tfrac{\sqrt{5}}{2} \psi_n(A) < k.
            \end{align*}
            \item $j = n-2$. By contradiction, assume that $\psi_{n-2}(A) + \psi_{n}(A) = \frac{1}{2} \chi_n(A)$. Then,
            \[
                \qquad\qquad \chi_n(A) = 2A \psi_{n-1}(A) = A^2 \psi_n(A) - A \chi_n(A)
            \]
            Using Pell's equation for $\chi_n^2(A)$ we obtain the condition
            \[
                (1+A)^2 \left(4 + (A^2 - 4) \psi_n^2(A)\right) = A^4 \psi_n^2(A)
            \]
            which factors to
            \begin{equation*}
                \qquad\qquad \left((2A+1)(A+1)(A-3)-1\right) \psi_n^2(A) + 4(A+1)^2 = 0.
            \end{equation*}

            Because $A > 2$, we have $4(A+1)^2 \geq 64$ which implies
            \begin{align*}
                \qquad\qquad (2A+1)(A+1)(A-3) - 1 &< 0, \\
                \qquad\qquad \text{i.e.} \quad (2A+1)(A+1)(A-3) &\leq 0.
            \end{align*}
            This requires that $A=3$, but plugging back into the above condition yields $\psi_n(3) = 8$. By monotonicity of $\psi_n(3)$, this yields $n=3$, which is the desired contradiction.
            \item $j = n-1$. By contradiction, assume that $\psi_{n-1}(A) + \psi_{n}(A) = \frac{1}{2} \chi_n(A)$. Then,
            \[ 
                \chi_n(A) = A\psi_n(A) - \chi_n(A) + 2\psi_n(A).
            \]
            Using Pell's equation for $\chi_n^2(A)$ we now obtain the condition
            \[
                (3A-10)(A+2) \psi_n^2(A) + 16 = 0
            \]
            This requires $3A - 10 < 0$ which again implies $A = 3$ under the assumptions of this lemma. Now, plugging back into this condition gives $5 \psi_n^2(3) = 16$, which is the desired contradiction as $5$ does not divide $16$.
        \end{enumerate}

        \item $i = -n$ and $j = n$. Again, $0 < \psi_n(A) - \psi_{-n}(A) = 2\psi_n(A) < 2k$. However, because $\psi_n(A) > 2$ under the assumptions of this lemma, we now have
        \begin{align*}
            4 \neq (20-A^2) \psi_n(A)^2 &= 16 \psi_n(A)^2 + (A^2-4)\psi_n(A)^2 \\ &= (4 \psi_n(A))^2 - \chi_n(A)^2 + 4
        \end{align*}
        so that $4 \psi_n(A) \neq \chi_n(A)$ and in particular $\psi_n(A) - \psi_{-n}(A) = 2\psi_n(A) \neq k$.
    \end{enumerate}    
    
    We conclude that $\psi_j(A) - \psi_i(A) \not\equiv 0 \pmod{k}$ in all of the above cases. This establishes that the residue map $\Psi : [-n,+n] \rightarrow \mathbb{Z}/k\mathbb{Z}$ is injective.
\end{proof}

\begin{lemma}
\label{lemma:sun10int} \sunref{\cite[Lemma 3.10]{Sun}}
Let $A,n,k,s,t$ be integers with $A > 2$, $n > 3$, such that $\chi_{n}(A) = 2k$ and $\psi_s(A)\equiv \psi_t(A) \pmod {k}$. Then $s\equiv t \pmod {2n}$ or $s\equiv -t \pmod {2n}$.
\end{lemma}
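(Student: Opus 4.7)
The plan is to combine the index shift identity from \cref{lemma:LucasElementary}\eqref{item:PsiIndexShift} with the injectivity statement of \cref{lemma:psiInjective}. The key intermediate fact I would establish is an ``anti-periodicity'' of $\psi$ modulo $k$: the identity $\psi_{n+r}(A) = \psi_r(A) \chi_n(A) + \psi_{n-r}(A)$ together with $\chi_n(A) = 2k$ gives $\psi_{n+r}(A) \equiv \psi_{n-r}(A) \pmod{k}$, and setting $m = n + r$ while using $\psi_{-j}(A) = -\psi_j(A)$ from \cref{lemma:LucasElementary}\eqref{item:LucasNegative} yields
\[
\psi_{m+2n}(A) \equiv -\psi_m(A) \pmod{k}
\]
for every $m \in \Z$. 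By induction on $|q|$, this gives $\psi_{m + 2nq}(A) \equiv (-1)^q \psi_m(A) \pmod k$ for all $q \in \Z$.

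Next, I would reduce $s$ and $t$ into a canonical window of length $2n$. Write $s = 2n q_s + r_s$ with $r_s \in (-n, n]$ and similarly $t = 2n q_t + r_t$. The anti-periodicity gives $\psi_s(A) \equiv (-1)^{q_s} \psi_{r_s}(A) \pmod k$, and using $-\psi_{r_s}(A) = \psi_{-r_s}(A)$ I repackage this as $\psi_s(A) \equiv \psi_{r_s'}(A) \pmod k$, where $r_s' := r_s$ if $q_s$ is even and $r_s' := -r_s$ if $q_s$ is odd. In either case $r_s' \in [-n, n]$ and $r_s' \equiv \epsilon_s \cdot s \pmod{2n}$ for some sign $\epsilon_s \in \{\pm 1\}$. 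The analogous reduction produces $r_t' \in [-n, n]$ with $r_t' \equiv \epsilon_t \cdot t \pmod{2n}$.

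Finally, the hypothesis $\psi_s(A) \equiv \psi_t(A) \pmod k$ translates to $\psi_{r_s'}(A) \equiv \psi_{r_t'}(A) \pmod k$, and since both $r_s', r_t'$ lie in $[-n, n]$, \cref{lemma:psiInjective} forces $r_s' = r_t'$. Hence $\epsilon_s \cdot s \equiv \epsilon_t \cdot t \pmod{2n}$, which is precisely $s \equiv \pm t \pmod{2n}$.

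The main thing to be careful about is the sign bookkeeping across the two independent parities of $q_s$ and $q_t$, and verifying that both $r_s'$ and $r_t'$ truly land in $[-n, n]$; the interval convention $r_s \in (-n, n]$ is chosen exactly so that its negation $-r_s$ lies in $[-n, n)$, keeping everything in the range where \cref{lemma:psiInjective} applies. Beyond this, the argument is essentially a reduction-modulo-period argument dressed up with one extra sign.
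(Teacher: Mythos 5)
Your proposal is correct and takes essentially the same approach as the paper: establish the anti-periodicity $\psi_{m+2n}(A)\equiv-\psi_m(A)\pmod{k}$ from the index-shift identity, reduce $s$ and $t$ to representatives in $[-n,n]$ with sign tracking, and invoke the injectivity of $\Psi$ from Lemma~\ref{lemma:psiInjective}. Your single-step reduction via $s=2nq_s+r_s$ with $r_s\in(-n,n]$ is a slightly cleaner packaging of the paper's two-stage reduction (first to $[0,4n]$, then to $[-n,n]$), but the underlying argument is the same.
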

\begin{proof}
    Recall that \cref{lemma:LucasElementary}~\eqref{item:PsiIndexShift} states that $\psi_{n+r}(A) = \psi_r(A) \chi_n(A) + \psi_{n-r}(A)$.
    Specializing to $r = n+m$ gives $\psi_{2n+m}(A) \equiv -\psi_m(A) \pmod{\chi_n(A)}$. Using the assumption that $\chi_n(A) = 2k$ we thus have
    \begin{equation}\label{eq:lem-sun10-congruence1}
        \psi_{2n+m}(A) \equiv -\psi_m(A) \pmod{k}.
    \end{equation}
    Now, induction on $q$ additionally proves
    \begin{equation}\label{eq:lem-sun10-congruence2}
        \psi_{4nq+m}(A) \equiv \psi_m(A) \pmod{k}.
    \end{equation}


    Consider $\psi_s(A)\equiv \psi_t(A) \pmod {k}$ as in the assumption of the lemma. Using congruence~\eqref{eq:lem-sun10-congruence2}, we can reduce generic integers $s, t$, without loss of generality, to the case where $s, t \in [0, 4n]$.
    
    Now, using the congruence property \eqref{eq:lem-sun10-congruence1} for the case $[n, 3n]$ and congruence property \eqref{eq:lem-sun10-congruence2} for the case $[3n, 4n]$, we can further reduce this interval to new indices $s', t' \in [-n, n]$ and the relation $\psi_{s'}(A)\equiv \pm \psi_{t'}(A) \equiv \psi_{\pm t'}(A) \pmod {k}$.
    
    We proceed to use the injectivity of the map $\Psi$ established in \cref{lemma:psiInjective} to find $s' = \pm t'$. Now remember that $s', t'$ differ from $s, t$ by an integer multiple of $2n$ each.
    After reducing modulo $2n$, this completes the proof of the lemma.
\end{proof}

Next, we will establish a congruence of the Lucas sequence $\psi_B(A)$ with a genuine Diophantine expression; this is Lemma~\ref{lemma:LucasCongruenceNEW} below. In its proof, a central step is the congruence to an exponential Diophantine expression, i.e.\ Lemma~\ref{lemma:S4.5}.

\begin{lemma}
\label{lemma:S4.5} \sunref{\cite[Lemma 4.5]{Sun}}
Let $A,B,U,V \in \Z$ with $B>0$ and $U^2 - AUV + V^2 \neq 0$. Then 
\[
(UV)^{B-1}\psi_B(A)\equiv \sum_{r=0}^{B-1} U^{2r}V^{2(B-1-r)} \pmod{U^2-AUV+V^2}.
\]
\end{lemma}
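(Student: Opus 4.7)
The plan is to prove the congruence by induction on $B$, using that both sides satisfy the same linear recurrence modulo $U^2 - AUV + V^2$. Introduce the abbreviations
\[
f_B := \sum_{r=0}^{B-1} U^{2r} V^{2(B-1-r)}, \qquad g_B := (UV)^{B-1} \psi_B(A),
\]
so the claim is $g_B \equiv f_B \pmod{U^2 - AUV + V^2}$ for all $B \ge 1$.

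First, I would verify the base cases. For $B = 1$, both $f_1$ and $g_1$ equal $1$. For $B = 2$, $g_2 = (UV)\,A$ while $f_2 = U^2 + V^2$, and the difference $U^2 + V^2 - AUV$ is precisely the modulus, so the congruence holds. Next, the Lucas recurrence $\psi_{B+1}(A) = A \psi_B(A) - \psi_{B-1}(A)$ (Definition of Lucas sequences) gives, after multiplying by $(UV)^B$,
\[
g_{B+1} = A (UV)\, g_B - (UV)^2\, g_{B-1}.
\]
So it suffices to show that $f_B$ satisfies the \emph{same} recurrence modulo $U^2 - AUV + V^2$.

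The key observation is that modulo $U^2 - AUV + V^2$ we have $AUV \equiv U^2 + V^2$. Therefore
\[
A(UV) f_B \equiv (U^2 + V^2)\, f_B \pmod{U^2-AUV+V^2}.
\]
Expanding, $U^2 f_B = \sum_{s=1}^{B} U^{2s} V^{2(B-s)}$ and $V^2 f_B = \sum_{r=0}^{B-1} U^{2r} V^{2(B-r)}$. Adding and regrouping, the $s=0$ term from $V^2 f_B$ and the $s=B$ term from $U^2 f_B$ together with the matching inner terms assemble into
\[
(U^2 + V^2) f_B \;=\; f_{B+1} \;+\; (UV)^2 \sum_{s=0}^{B-2} U^{2s} V^{2(B-2-s)} \;=\; f_{B+1} + (UV)^2 f_{B-1}.
\]
Rearranging yields $f_{B+1} \equiv A(UV) f_B - (UV)^2 f_{B-1} \pmod{U^2-AUV+V^2}$, matching the recursion for $g_B$. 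By strong induction on $B$, the claim follows.

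The hypothesis $U^2 - AUV + V^2 \neq 0$ only ensures that the modulus is a nontrivial integer (so that the congruence is a meaningful statement); it does not enter the calculation, which is a purely formal polynomial identity in $\Z[A,U,V]$ modulo the ideal generated by $U^2-AUV+V^2$. The only slightly delicate step is bookkeeping in the telescoping sum for $(U^2+V^2)f_B$, but this is routine once one pairs up the shifted indices, so I do not expect any serious obstacle.
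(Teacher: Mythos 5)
Your proof is correct and takes essentially the same route as the paper: induction on $B$ with base cases $B=1,2$, using the Lucas recurrence and the reduction $AUV \equiv U^2 + V^2$ modulo $U^2 - AUV + V^2$. Your reorganization of the inductive step (showing both sides satisfy the same three-term recurrence, rather than substituting the inductive hypotheses directly and telescoping) is a slightly cleaner presentation of the same calculation.
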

\begin{proof}This is a straightforward proof by induction. For $B=1,2$ the statement is clear as $\psi_1(A)=1$ and $\psi_2(A)=A$. For $B\ge 3$, assume the statement holds for $B-1$ and $B-2$. Then, using the recursive definition of $\psi_B(A)$, we have
\begin{align*}
(UV)^{B-1}\psi_B(A) &= 
\left((UV)(UV)^{B-2}A\psi_{B-1}(A)-(UV)^2(UV)^{B-3}\psi_{B-2}(A)\rule{0pt}{12pt}\right) \\
			&\equiv AUV  \sum_{r=0}^{B-2} U^{2r}V^{2(B-2-r)}-(UV)^2\sum_{r=0}^{B-3} U^{2r}V^{2(B-3-r)} \\
			&\equiv (U^2+V^2)\sum_{r=0}^{B-2} U^{2r} V^{2(B-2-r)}-\sum_{r=0}^{B-3} U^{2r+2}V^{2(B-2-r)} 
\\			
&= U^2 \sum_{r=0}^{B-2} U^{2r} V^{2(B-2-r)} +V^2\sum_{r=1}^{B-2}U^{2r} V^{2(B-2-r)} + V^2U^0V^{2(B-2-0)} 
\\
&\qquad -\sum_{r=0}^{B-3} U^{2r+2}V^{2(B-2-r)} 
\\
&= \sum_{r=1}^{B-1} U^{2r} V^{2(B-1-r)}+ U^0V^{2(B-1-0)}
\\
&= \sum_{r=0}^{B-1} U^{2r}V^{2(B-1-r)} \pmod{U^2-AUV+V^2}
   \;.
\end{align*}
(From line 4 to 5, the second and third summations cancel after an index shift).
\end{proof}

Note that $U^2 - AUV + V^2 = 0$ can only happen when $U=V$ and $A=2$. In that case, the left and right-hand sides of the previous claim are already equal because $\psi_B(2) = B$. Either way, we will only use the case $U \neq V$ below.

\begin{lemma}
\label{lemma:LucasCongruenceNEW} \sunref{\cite[Lemma 4.6]{Sun}}
For fixed $B>0$ consider the congruence
\begin{equation}
3W \psi_B(A)\equiv 2(W^2-1) \pmod {2A-5}
\;.
\label{Eq:CongruenceNew}
\end{equation}
If $W=2^B$, then the congruence is satisfied. Conversely, when the congruence is satisfied and $|A| \ge \max\{W^4, 2^{4B}\}$, then $W=2^B$.
\end{lemma}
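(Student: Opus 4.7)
For the forward direction, I would specialize Lemma~\ref{lemma:S4.5} to $U = 2$, $V = 1$, so that $U^2 - AUV + V^2 = 5 - 2A$ and the lemma gives
\[
2^{B-1} \psi_B(A) \equiv \sum_{r=0}^{B-1} 4^r = \frac{4^B - 1}{3} \pmod{2A-5} \, .
\]
Since $2A - 5$ is odd, multiplying by $6$ yields the clean relation
\begin{equation}
3 \cdot 2^B \psi_B(A) \equiv 2(4^B - 1) \pmod{2A-5} \, ,
\label{eq:plan-central-cong}
\end{equation}
which is precisely \eqref{Eq:CongruenceNew} when $W = 2^B$ (so that $W^2 = 4^B$).

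For the converse direction, I would combine the assumed \eqref{Eq:CongruenceNew} with the identity \eqref{eq:plan-central-cong} to produce a factored congruence. Multiplying \eqref{Eq:CongruenceNew} through by $2^B$ and replacing $3 \cdot 2^B \psi_B(A)$ using \eqref{eq:plan-central-cong} gives $2W(4^B - 1) \equiv 2^{B+1}(W^2 - 1) \pmod{2A-5}$. After dividing by $2$ (permissible, as $2A-5$ is odd) and regrouping $W \cdot 4^B - W - 2^B W^2 + 2^B = -(W-2^B)(2^B W + 1)$, this is equivalent to
\begin{equation}
(W - 2^B)(2^B W + 1) \equiv 0 \pmod{2A-5} \, .
\label{eq:plan-factored}
\end{equation}

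The final step is a size bound. The hypothesis $|A| \geq \max\{W^4, 2^{4B}\}$ gives $|W| \leq |A|^{1/4}$ and $2^B \leq |A|^{1/4}$, so
\[
|W - 2^B| \cdot |2^B W + 1| \leq 2|A|^{1/4}(|A|^{1/2} + 1) \, ,
\]
which is strictly less than $|2A-5|$ once $|A| \geq 16$, and in particular under our hypothesis. The left-hand side of \eqref{eq:plan-factored} must therefore vanish, and since $2^B W + 1 \neq 0$ for any integer $W$, we conclude $W = 2^B$.

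The most delicate part is the algebra producing \eqref{eq:plan-factored}: one has to combine the assumed congruence with the Lucas-sequence identity in just the right way, tracking invertibility of the powers of $2$ modulo the odd number $2A-5$, so that the result factors as a product of two linear expressions in $W$ amenable to the size estimate. Once the factorization is in hand, the remainder of the argument is a routine comparison of magnitudes.
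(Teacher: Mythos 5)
Your proof is correct, and it takes a slightly but genuinely different route from the paper's in two places. For the forward direction, the paper proves \eqref{eq:plan-central-cong} by a short induction on $B$; you instead invoke Lemma~\ref{lemma:S4.5} with $U=2$, $V=1$, which is exactly what the paper reserves for the converse direction. Using the same identity for both directions is a pleasant unification. For the converse, both arguments arrive at the factored congruence $(W-2^B)(2^BW+1)\equiv 0\pmod{2A-5}$, but the paper bounds the \emph{unfactored} expression $(2^{2B}-1)W-2^{B}(W^2-1)$, and needs a separate case split: $|W|\ge 2$ (where the size estimate works) versus $|W|\le 1$ (a divisibility contradiction), and only afterwards factors and uses $|2^BW+1|\ge 2^B|W|-1\ge 3$, which itself silently relies on $|W|\ge 2$. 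You instead bound the product directly by $2|A|^{1/4}(|A|^{1/2}+1)$, which handles all $W$ uniformly since $|W|,2^B\le |A|^{1/4}$; and you close with the cleaner observation that $2^BW+1$ is odd (hence nonzero) for $B>0$, sidestepping the need for $|W|\ge 2$ at this stage. Your version is tighter. One cosmetic remark: you write ``since $2A-5$ is odd, multiplying by $6$\ldots'', but multiplying a congruence by an integer needs no coprimality hypothesis -- the oddness of $2A-5$ is only needed later, when you divide by $2$.
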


\begin{proof}
We prove the first part via induction on $B$, even for $B\ge 0$. For $B=0$ we have $3\cdot 2^0\psi_0(A)=0= 2(2^0-1) $ and for $B=1$ we have $3\cdot 2^1\psi_1(A)=6= 2(2^2-1)$. To show that the congruence survives the inductive step, we compute
\begin{align*}
3\cdot 2^{B+1}\psi_{B+1}(A) &= 6A\cdot 2^B \psi_B(A)-12\cdot 2^{B-1}\psi_{B-1}(A) 
\\
&\equiv
4A(2^{2B}-1)-8(2^{2B-2}-1) \\
&= (4A-10)(2^{2B}-1)+10(2^{2B}-1)-8(2^{2B-2}-1) \\
&\equiv 2^{2B-2}(40-8)-2=2( 2^{2B+2}-1)
\;.
\end{align*}

For the converse direction, we multiply our hypothesis \eqref{Eq:CongruenceNew} on both sides by $2^{B-1}$ and obtain 
\begin{equation}
3W2^{B-1}\psi_B(A) \equiv 2^B(W^2-1) \pmod{2A-5}
\;.
\label{Eq:Sun4.6a}
\end{equation}
Now we use Lemma~\ref{lemma:S4.5} with $U = 2$ and $V = 1$:
\[
2^{B-1}\psi_B(A) \equiv \sum_{r=0}^{B-1} 2^{2r} \pmod{2A-5}
\;.
\]
Together, we obtain
\begin{equation}
3W2^{B-1}\psi_B(A) \equiv 3W \sum_{r=0}^{B-1} 2^{2r} \equiv (2^{2B}-1)W \pmod{2A-5}
\;.
\label{Eq:Sun4.6b}
\end{equation}

Subtracting equations \eqref{Eq:Sun4.6a} and \eqref{Eq:Sun4.6b}, we obtain \begin{equation}
(2^{2B}-1)W - 2^{B}(W^2 - 1)  \equiv 0 \pmod{2A-5}
\;.
\label{Eq:Sun4.6c}
\end{equation}

Next, we will bound the left-hand side by $2A - 5$ to show that it equals zero. To do so, consider first the case $|W| \geq 2$, such that
\[
|(2^{2B}-1)W| = 2^{2B}|W| - |W| < |A|^{\frac34} - 2 \leq |A| -2
\;,
\]
and similarly $2^B(W^2-1) < |A|-2$. Combining both bounds gives the required bound $|(2^{2B}-1)W - 2^B(W^2-1)| < 2|A|-5\le |2A-5|.$.

Finally, the cases $|W| \leq 1$ all lead to a contradiction. If $W = 0$, then $2^B \equiv 0 \pmod{ 2A-5}$ which is a absurd as $B >0$ and $|A| > 2^{4B}$ . Similarly, if $|W| = 1$, we obtain $2^{2B}-1 \equiv 0 \pmod{2A-5}$ which is absurd for the same reason.

Now we know that equation~\eqref{Eq:Sun4.6c} equals zero and we can factor 
\[
0 = 2^{B}(W^2 - 1) - (2^{2B}-1)W = (2^BW+1)(W-2^B)
\;.
\]

At last, observe that $|2^BW+1| \geq 2^B|W|-1\geq 3$ so we must have $W = 2^B$.
\end{proof}

\newpage

\section{From Binomial Coefficients to Diophantine Relations}
\label{Sec:BinomDioph}
In this section, we apply the results on Lucas sequences to prove our ``Bridge Theorem'' (Theorem~\ref{step:Sun4}), which provides a Diophantine expression the power of two and the binomial coefficient that come out of Theorem~\ref{step:polynomial-to-binomial}. Before that, we need to show a couple of lemmas and introduce some auxiliary variables to simplify the expressions. The reader is advised to keep a copy of this page for reference.

\begin{definition}[Defining several variables]
\label{def:VariableDefinition}
Given variables $X,Y,b,g,h,k,l,w,x,y$ we define the following variables:
\begin{subequations}
\begin{align}
U &:= 2lXY \label{eq:Def_U} \\
    V &:= 4gwY \label{eq:Def_V} \\
A &:= U(V+1) \label{eq:Def_A} \\
B &:= 2X+1 \label{eq:Def_B} \\
C &:= B+(A-2)h \label{eq:Def_C} \\
D &:= (A^2-4)C^2+4 \label{eq:Def_D} \\
E &:= C^2Dx \label{eq:Def_E} \\
F &:= 4(A^2-4)E^2+1 \label{eq:Def_F} \\
G &:= 1+CDF-2(A+2)(A-2)^2E^2 \label{eq:Def_G} \\
H &:= C+BF+(2y-1)CF \label{eq:Def_H} \\
I &:= (G^2-1)H^2+1 \label{eq:Def_I} \\
J &:= X+1+k(U^2V-2) \label{eq:Def_J} 
\end{align}
\end{subequations}
\end{definition}

The main theorem of this section will refer to the following relations, using the variables introduced in Definition~\ref{def:VariableDefinition}:
\begin{subequations}
\begin{align}
&DFI \in \square \label{eq:DFI} \\
&(U^{4}V^2-4)J^2+4\in\square \label{eq:UVK} \\
&(2A-5) \,\big|\, (3bwC-2(b^2w^2-1)) \label{eq:pAppWW_new}\\
& \left(\frac{C}{J}-lY\right)^2 < \frac{1}{16g^2} \label{eq:g}
\\
&\left(\frac{C}{J}-lY\right)^2 < \frac{1}{4}
\label{eq:weakineq}
\;.
\end{align}
\end{subequations}

The following two lemmas are Diophantine encodings of Pell equations. One of the key steps in the solution of Hilbert's 10th problem is proving that exponential Diophantine equations are Diophantine. Below are suitably adapted versions of parts of this proof, minimizing the required number of variables.

\begin{lemma} 
\label{lemma:Sun4.3} \sunref{\cite[Lemma 4.3]{Sun}}
Given integers $A\ge 4$, $B \ge 3$ and $A$ even, suppose that $C=\psi_B(A)$. Then there are arbitrarily large positive integers $x,y$ such that, defining $D, E, F, G, H, I$ as in \eqref{eq:Def_D}--\eqref{eq:Def_I}, we have $DFI\in\square$.
\end{lemma}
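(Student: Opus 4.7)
Rather than attacking $DFI \in \square$ as a whole, I would secure each of $D$, $F$, $I$ as a square separately. For $D$ this is automatic: Pell's equation (Lemma~\ref{lemma:PellEquation}) applied to $C = \psi_B(A)$ gives $D = (A^2-4)\psi_B(A)^2 + 4 = \chi_B(A)^2$. Moreover, the index-shift identity (Lemma~\ref{lemma:LucasElementary}~\eqref{item:PsiIndexShift}) with $n=r=B$ yields $\psi_{2B}(A) = C\chi_B(A)$, so $C^2 D = \psi_{2B}(A)^2$. This identity drives the divisibility arithmetic below, letting $E$ be matched to a suitable Pell solution.

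\textbf{Choosing $x$ so that $F \in \square$.} The equation $F = 4(A^2-4)E^2 + 1 = W^2$ is the Pell equation $W^2 - (A^2-4)(2E)^2 = 1$; dividing $\chi_n(A)^2 - (A^2-4)\psi_n(A)^2 = 4$ by $4$, its positive solutions correspond to $(W, 2E) = (\chi_n(A)/2, \psi_n(A)/2)$ for indices $n$ with $\chi_n, \psi_n$ both even. Under the hypothesis $A$ even, Lemma~\ref{lemma:LucasElementary}~\eqref{item:LucasEven},~\eqref{item:ChiEven} show this holds whenever $n$ is even. Realizing such an $E = \psi_n(A)/4$ in the form $C^2 D x = \psi_{2B}(A)^2\, x$ requires $4\psi_{2B}(A)^2 \mid \psi_n(A)$. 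The law of apparition (Lemma~\ref{lemma:LucasElementary}~\eqref{item:LucasApparitionRepetition}) produces arbitrarily large such $n$, and by periodicity of $\psi$ modulo any modulus we may additionally force $n$ even. This gives arbitrarily large positive integers $x$ rendering $F$ a perfect square.

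\textbf{Choosing $y$ so that $I \in \square$.} By Corollary~\ref{cor:PellEquation} applied with the parameter $G$ in place of $A$, the condition $I = (G^2-1)H^2 + 1 \in \square$ is equivalent to $H = \psi_m(2G)$ for some integer $m$; I would aim for $m = B$. First, $E = C^2 D x$ is a multiple of $C$, so from the definition $G = 1 + CDF - 2(A+2)(A-2)^2 E^2$ we get $G \equiv 1 \pmod{C}$, whence $\psi_B(2G) \equiv \psi_B(2) = B \pmod{C}$ by Lemma~\ref{lemma:LucasSequenceCongruence}. Second, $4(A^2-4)E^2 \equiv -1 \pmod{F}$, and multiplying the definition of $G$ by $2$ a direct computation then yields $2G \equiv A \pmod{F}$, so $\psi_B(2G) \equiv \psi_B(A) = C \pmod{F}$ by the same lemma. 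Since $F - 1 = 4(A^2-4)E^2$ is a multiple of $C^2$, we have $\gcd(C, F) = 1$, and the Chinese Remainder Theorem places $\psi_B(2G)$ in the residue class $C + BF \pmod{CF}$. A parity check (using $A$ even, $F$ odd, and the parity of $C = \psi_B(A)$ matching that of $B$) refines this to $\psi_B(2G) \equiv C + BF \pmod{2CF}$ with an odd coefficient of $CF$; exponential growth of $\psi_B(2G)$ (Lemma~\ref{lemma:LucasElementary}~\eqref{item:LucasGrowth}) then lets us solve $H = C + BF + (2y-1)CF = \psi_B(2G)$ for a positive integer $y$, which grows without bound as we enlarge $x$.

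\textbf{Main obstacle.} The delicate points are (i) arranging the divisibility $4\,\psi_{2B}(A)^2 \mid \psi_n(A)$ simultaneously with $n$ even, which is where the structural properties of Lucas sequences are fully used, and (ii) the parity refinement from mod $CF$ to mod $2CF$, which depends essentially on the hypothesis that $A$ is even and $B \ge 3$. With these two steps in place, $D$, $F$, and $I$ are each perfect squares, and hence so is their product $DFI$.
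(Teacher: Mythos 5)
Your plan follows the paper's own proof closely: $D$ is a square by Pell's equation, $F$ is secured via the law of apparition applied with modulus $4C^2D$, and $I$ by matching $H$ to $\psi_B(2G)$ through a Chinese Remainder argument and exponential growth. (Two cosmetic remarks: the identity $C^2D = \psi_{2B}(A)^2$ is pleasant but not needed, and the extra requirement that $n$ be even is automatic once $4C^2D \mid \psi_n(A)$, since then $\psi_n(A)$ is even and, with $A$ even, $n$ must be even by Lemma~\ref{lemma:LucasElementary}~\eqref{item:LucasEven}.)

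There is, however, a genuine gap in the final CRT step. You derive $\psi_B(2G) \equiv B \pmod{C}$ and $\psi_B(2G) \equiv C \pmod{F}$, combine via CRT to $\psi_B(2G) \equiv C + BF \pmod{CF}$, and then try to upgrade to modulus $2CF$ with an odd coefficient of $CF$ by a parity check. Writing $\psi_B(2G) = C + BF + kCF$, reduction modulo $2$ (using that $F$ is odd and that $\psi_B(2G) \equiv C \equiv B \pmod{2}$) yields $kC \equiv B \pmod{2}$, which forces $k$ odd only when $B$ (hence $C$) is odd. When $B$ is even, $C$ is also even, the congruence degenerates to $0 \equiv 0$, and the parity of $k$ is left undetermined. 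Since the lemma is stated for all $B \ge 3$, this is a gap in the argument, although in the eventual application $B = 2X+1$ is always odd, so it would not surface there.

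The clean fix, which is what the paper does, is to upgrade the first congruence to modulus $2C$ from the outset: $G \equiv 1 \pmod{C}$ already gives $2G \equiv 2 \pmod{2C}$, so Lemma~\ref{lemma:LucasSequenceCongruence} directly yields $\psi_B(2G) \equiv B \pmod{2C}$. Since $F$ is odd and coprime to $C$, $\gcd(2C, F) = 1$, and CRT modulo $2CF$ then pins down $\psi_B(2G) \equiv C + BF - CF \pmod{2CF}$ for every $B \ge 3$ (using $2 \mid (C-B)$ and $C \mid (F-1)$ to check the residue mod $2C$), with no separate parity argument required.
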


\begin{proof}
By definition \eqref{eq:Def_D} we have $D:=(A^2-4)C^2+4$. This is Pell's equation with $d=A^2-4$, so by Lemma~\ref{lemma:PellEquation}, all its solutions have the form $(\chi_n(A),\psi_n(A))$, and since $C=\psi_B(A)$ by hypothesis, we have  $D = (\chi_B(A))^2 \in\square$.

The law of apparition (Lemma~\ref{lemma:LucasElementary} \eqref{item:LucasApparitionRepetition}) for the Lucas sequence $\psi_n(A)$ says that for $N = 4C^2D$ there are infinitely many $q$ so that $x:=\psi_q(A)/4C^2D$ is a positive integer, and by the exponential growth law (Lemma~\ref{lemma:LucasElementary}~\eqref{item:LucasGrowth}) these can become arbitrarily large. By definitions~\eqref{eq:Def_F} and \eqref{eq:Def_E}, we have 
\begin{align}\label{eq:Fx}
F&=4(A^2-4)E^2+1= 4(A^2-4)C^4D^2x^2+1 \nonumber \\
&=4(A^2-4)(\psi_q(A)/4)^2+1= \left( (A^2-4)\psi_q(A)^2+4 \right)/4
\;.
\end{align}
hence $F = (\chi_q(A)/2)^2 \in\square$ by Pell's equation and Lemma~\ref{lemma:LucasElementary}~\eqref{item:ChiEven}.

Finally, by definition \eqref{eq:Def_G}, we set $G:=1+CDF-2(A+2)(A-2)^2E^2$ and further define $H_0:=\psi_B(2G)$ and $I_0:=(G^2-1)H_0^2+1$. Again by Pell's equation (Corollary~\ref{cor:PellEquation}) we have $I_0\in\square$. Note that $H=H_0$ implies $I=I_0$ and thus $DFI\in\square$ by \eqref{eq:Def_H} and
\eqref{eq:Def_I}. Hence, it remains to show that there exist arbitrarily large $y$ such that $H_0 = H := C+BF+(2y-1)CF$.

By Lemma \ref{lemma:LucasSequenceCongruence}, we have
\begin{equation}\label{eq:H-equiv-G}
	H_0 = \psi_B(2G) \equiv \psi_B(2)=B \pmod{ 2G-2}
\end{equation}
and 
\begin{equation}\label{eq:eq:H-equiv-C}
	H_0 = \psi_B(2G) \equiv \psi_B(A)=C \pmod{ 2G-A}
\;.
\end{equation}
Note that
\begin{align}\label{2GA}
    2G-A &= 2(1+CDF-2(A+2)(A-2)^2E^2)-A \nonumber \\
&= 2(1+CDF)-4(A^2-4)E^2(A-2)-A \nonumber
\\
&= 2(1+CDF) -(F-1)(A-2)-A 
\\
&= 2 + 2CDF - F(A-2) +A-2-A \nonumber \\
&= F(2CD-A+2) \nonumber
\;.
\label{eq:G relation}
\end{align}

Therefore, \eqref{eq:eq:H-equiv-C} implies that
\begin{equation}\label{eq:H-equiv-F}
H_0 \equiv C \equiv C+BF -CF \pmod F
\;.
\end{equation} 
Since $C|E$, note further that
\[
2G-2 = 2(1+CDF-2(A+2)(A-2)^2E^2-1) \equiv  0 \pmod {2C}
\;.
\]
Observe that $2|(C-B)$ by Lemma~\ref{lemma:LucasElementary} \eqref{item:LucasEven} and $C|(F-1)$, so using \eqref{eq:H-equiv-G} we have
\begin{equation}\label{eq:H-equiv-2C}
H_0 \equiv B \equiv B+(B-C)(F-1) = C + BF - CF \pmod{2C}
\;. 
\end{equation}

Clearly, $F$ and $E$ are coprime, and since $C|E$, also $F$ and $C$ are coprime. Thus \eqref{eq:H-equiv-F} and \eqref{eq:H-equiv-2C} imply
\[
H_0 \equiv C + BF -CF \pmod{2CF}
\]
and there is an integer $y$ so that $H_0 = H$ as defined in \eqref{eq:Def_H}.

We still need to show that there are arbitrarily large $y$. Note that $A$ and $B$ are given and $C$, $D$ depend only on these while $F$ is a positive, increasing polynomial as a function of $x$ by \eqref{eq:Fx}. So the same holds for $G$ by \eqref{2GA}. 
Finally by Lemma~\ref{lemma:LucasElementary} \eqref{item:LucasGrowth}, we have exponential growth of  $H=\psi_B(2G)$ as a function of $x$. Since $H=C+BF+(2y-1)CF$, the same holds for $y$.
\end{proof}

The previous lemma has the following converse, albeit under slightly different hypotheses, adapted to the context in which we will use it later.

\begin{lemma}
\label{lemma:PellDiophantine2} \sunref{\cite[Lemma 4.4]{Sun}}
Let $A,B,C$ be integers so that  $1<B<|A|/2-1$ and $ (A-2)|(C-B)$. For integers $x,y$ with $x\neq 0$ define $D,F,I$ as in \eqref{eq:Def_D}--\eqref{eq:Def_I}. 

Then $DFI\in\square$ implies $C=\psi_B(A)$.
\end{lemma}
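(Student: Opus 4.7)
The plan is to reverse-engineer the construction of Lemma~\ref{lemma:Sun4.3}: from $DFI\in\square$ I would first extract individual squareness of $D$, $F$, and $I$; then apply the Pell-equation characterizations to locate $C$ and $H$ within Lucas sequences; and finally chase congruences under the hypotheses $1<B<|A|/2-1$ and $(A-2)\mid(C-B)$ to force $C=\psi_B(A)$.

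My first step is to show that $D$, $F$, $I$ are pairwise coprime, so that $DFI\in\square$ forces each factor to be a perfect square. Since $E=C^2Dx$, the quantity $F-1=4(A^2-4)E^2$ is divisible by $D$, so $F\equiv 1\pmod D$ and $\gcd(D,F)=1$. Because the nontrivial terms of $G$ are also divisible by $D$, one has $G\equiv 1\pmod D$, hence $I\equiv 1\pmod D$ and $\gcd(D,I)=1$. The coprimality $\gcd(F,I)=1$ is more delicate: assuming a prime $p$ divides both $F$ and $I$, the algebraic identity $2G-A=F(2CD-A+2)$ (derived exactly as in the proof of Lemma~\ref{lemma:Sun4.3}) together with $p\mid I$ forces $H\equiv\pm 4E\pmod p$; modulo $F$ this reads $p\mid C(1\pm 4CDx)$, and a short calculation in both sub-cases yields $(A^2-4)C^2\equiv -4\pmod p$, i.e.\ $p\mid D$, contradicting $\gcd(D,F)=1$.

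Once $D$, $F$, $I$ are known to be individual squares, Pell's equation (Lemma~\ref{lemma:PellEquation}) applied to $D=(A^2-4)C^2+4$ yields $C=\varepsilon\,\psi_n(A)$ and $D=\chi_n(A)^2$ for some $n\ge 0$ and sign $\varepsilon\in\{\pm 1\}$, while Corollary~\ref{cor:PellEquation} applied to $I=(G^2-1)H^2+1$ yields $H=\psi_m(2G)$ for some $m\in\Z$. The identity $2G\equiv A\pmod F$ and the relation $G\equiv 1\pmod C$ (from $C\mid E$) then propagate through Lemma~\ref{lemma:LucasSequenceCongruence} to give
\[
\psi_m(A)\equiv H\equiv C\pmod F \qquad\text{and}\qquad m\equiv H\equiv B\pmod{2C},
\]
where the direct evaluations on the right use $H=C+BF+(2y-1)CF$ together with $F\equiv 1\pmod{2C}$.

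The final step is to combine these congruences with $\psi_m(A)\equiv m\pmod{A-2}$ (Corollary~\ref{cor:LucasSequenceCongruence}), $\psi_n(A)\equiv n\pmod{A-2}$, $\gcd(F,A-2)=1$, and $C\equiv B\pmod{A-2}$ to obtain $m\equiv \varepsilon n\equiv B\pmod{A-2}$. The bound $1<B<|A|/2-1$ places $B$ in a residue window of width less than $|A-2|/2$; combined with $m\equiv B\pmod{2C}$ and a careful size estimate (using $F\ge 4(A^2-4)C^4D^2+1$ once $x\ne 0$), this rules out alternatives such as $m=-B$ or $m=B\pm(A-2)$ and forces $m=n=B$ with $\varepsilon=+1$, so that $C=\psi_B(A)$. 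I expect the main obstacle to be precisely this final size-and-congruence bookkeeping---ensuring that the narrow window for $B$ together with the various moduli uniquely pins down the indices; the pairwise coprimality argument is arithmetically straightforward once the right identities are in hand.
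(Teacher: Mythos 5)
Your overall scaffolding matches the paper's: show $D$, $F$, $I$ are pairwise coprime so each is a square, apply Pell to locate $C$ and $H$ in Lucas sequences, and then chase congruences modulo $A-2$, $2C$, and $F$ to pin down the index. The coprimality argument you sketch for $\gcd(F,I)=1$ (passing through $H\equiv\pm 4E\pmod p$) is more roundabout than the paper's---which simply observes $H\equiv C\pmod d$, hence $d\mid (4G^2-4)C^2+4$, then uses $2G\equiv A\pmod d$ to land $d\mid D$---but it does reach the same conclusion.

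The gap is in your final step, and it is a genuine one, not just bookkeeping. You write ``$\psi_m(A)\equiv H\equiv C\pmod F$'' and then assert that a ``careful size-and-congruence'' argument involving $\gcd(F,A-2)=1$ and a lower bound on $F$ will pin down $m$. This does not work: the congruence lives modulo $F$, while your target congruence is modulo $A-2$, and these moduli are coprime---you cannot transfer $\psi_m(A)\equiv C$ from one to the other. More fundamentally, a congruence $\psi_m(A)\equiv\psi_n(A)\pmod F$ gives no control over the index $m$ by size alone, because $m$ is unbounded and $\psi_m(A)$ can vastly exceed $F$. Extracting an index congruence from a congruence of Lucas \emph{values} is exactly where the real difficulty lies, and it requires structure theory that your plan never invokes: the squareness of $F$ must be used to produce a \emph{third} Lucas index $m'$ with $4E=\psi_{m'}(A)$ and $\chi_{m'}(A)^2=4F$; then Lemma~\ref{lemma:sun7} (from $\psi_k(A)^2\mid\psi_{m'}(A)$ deduce $\psi_k(A)\mid m'$, giving $C\mid m'$) and Lemma~\ref{lemma:sun10int} (from $\psi_s(A)\equiv\psi_t(A)\pmod k$ with $\chi_{n}(A)=2k$ deduce $s\equiv\pm t\pmod{2n}$, which itself rests on the injectivity result Lemma~\ref{lemma:psiInjective}) convert the value congruence modulo $F$ into the index congruence $t\equiv\pm p\pmod{2C}$. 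Only after that does the triangle-inequality estimate with $\lvert C\rvert=\psi_{\lvert p\rvert}(\lvert A\rvert)$ close the argument. Without this machinery your plan cannot rule out the infinitely many indices $m$ compatible with the mod-$F$ congruence, so the ``final size-and-congruence bookkeeping'' you flag as the main obstacle is indeed where the proof breaks down.
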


\begin{proof}
We have the hypothesis $|A-2|\ge |A|-2>2B>2$, hence $|A|>4$. If $C=0$, we cannot have $(A-2)|B$, so we must have $C\neq 0$. Likewise, if $\abs{C} = 1$, we still cannot have $(A-2)|(\pm 1 - B)$, so we must have $\abs{C} \geq 2$.

Thus, using \eqref{eq:Def_D} we have $D := (A^2-4)C^2+4\ge 4$. 
According to \eqref{eq:Def_E} we have $E=C^2Dx$ and so $E \neq 0$.

By definition in \eqref{eq:Def_F} we have $F := 4(A^2-4)E^2+1 $, and $D|E$ implies that $F\equiv 1\pmod D$. Similarly, the definitions in \cref{eq:Def_G,eq:Def_H,eq:Def_I} imply $D|(G^2-1)$ and $I\equiv 1\pmod D$. Thus $D$ is coprime to $FI$, so $D\in\square$ and $FI\in\square$. 

Let us show that $F$ and $I$ are also relatively prime, hence that $F\in\square$ and $I\in\square$. To this extent, suppose that there is some $d \ge 1$ such that $d|F$ and $d|I$; we want to show that $d=1$. By definition \eqref{eq:Def_H} we then have $H \equiv C \pmod d$. Thus, the definition of $I$ in \eqref{eq:Def_I} implies that 
\[
0\equiv I =(G^2-1)H^2+1\equiv (G^2-1)C^2+1 \pmod d
\;,
\]
hence 
\begin{equation}
d\,| \left((4G^2-4)C^2+4\right)
\;.
\label{eq:d divides 4GC}
\end{equation}
The definition $G := 1+CDF-2(A+2)(A-2)^2E^2$ from \eqref{eq:Def_G} then implies
\[
2G \equiv 2+2CDF-(F-1)(A-2) \equiv A \pmod d,
\]
and so \eqref{eq:d divides 4GC} becomes $d \,|\left((A^2-4)C^2+4\right)$, that is $d \, | \, D$. But since $D$ is coprime to $FI$, we must indeed have $d=1$, so all three of $D,F,I$ are squares separately. 

Now, because $D$ is defined in the form of a Pell equation, Lemma \ref{lemma:PellEquation} implies that $C=\psi_p(A)$ for some $p \in \Z$. It is left to show that $p=B$. Similarly, since $F$ and $I$ are squares, there are integers $m$ and $t$ such that $4E=\psi_m(A)$ and $H=\psi_t(2G)$.

Our hypothesis $(A-2)|(C-B)$ together with Corollary~\ref{cor:LucasSequenceCongruence} implies
\[
B \equiv C = \psi_p(A) \equiv p \pmod {A-2}.
\]

From this result, it follows that $\abs{p} \geq 2$ because the three cases $p = -1, 0, 1$ lead to a contradiction: they would imply $C = \psi_p(A) = -1, 0, 1$, respectively, by the chosen initial conditions of the Lucas sequence.

Next, we can establish that
\begin{equation}
    \label{eq:Bpequiv}
    B \equiv \pm p \pmod{2C}.
\end{equation}
First note that by definition of the variables, $F\equiv 1 \pmod {2C}$ and thus $H=BF+((2y-1)F+1)C\equiv B \pmod{2C}$. Moreover 
\[
2G=2+2CDF-4(A+2)(A-2)^2C^4D^2x^2\equiv 2 \pmod{2C}.
\]
Hence by Corollary~\ref{cor:LucasSequenceCongruence}, $H=\psi_t(2G)\equiv t \pmod {2C}$. Putting these equivalences together, we can reduce \eqref{eq:Bpequiv} to
\begin{equation}
    \label{eq:tsequiv}
    t \equiv \pm p \pmod {2C}.
\end{equation}
Since $C^2|4E$, Lemma~\ref{lemma:sun7} yields $\psi_{|p|}(A)|m$ and by symmetry $C|m$. So it suffices to show
\begin{equation}
    \label{eq:tsnequiv}
    t \equiv \pm p \pmod {2n}.
\end{equation}
where $n=|m|$.

Observe that $\psi_n(|A|) = 4C^2 D |x| \geq D > A^2 - 1 = \psi_3(|A|)$. This implies that $n > 3$ by the strict monotonicity property of Lemma~\ref{lemma:LucasElementary} \eqref{item:LucasMonotonicity}.

As a solution of the Pell equation, we have $\chi_m(A)^2 = 4F$ and thus $\chi_n(|A|)^2 = 4F$. Because $F$ is square, write $F=k^2$. By definition of the variables and \eqref{2GA}, we also find $H\equiv C \pmod {F}$ and $2G\equiv A\pmod {F}$. Thus using Lemma~\ref{lemma:LucasSequenceCongruence}, 
\[
\psi_t(A)\equiv \psi_t(2G)\equiv \psi_p(A) \pmod {F}.
\]
and also
\[
\psi_t(|A|) \equiv \psi_p(|A|) \pmod {k}.
\]
Everything is set up in order to apply Lemma~\ref{lemma:sun10int} to conclude equation~\eqref{eq:tsnequiv} and consequently equation~\eqref{eq:Bpequiv}.

\medskip

Before we can finish the proof, note that the initial value $\psi_1(A) = 1$ and the strict monotonicity property (Lemma~\ref{lemma:LucasElementary} \eqref{item:LucasMonotonicity}) imply that if $A > 1$ then $\psi_p(A) \geq p$.
Moreover, this implies, together with symmetry in both $A$ and $p$ (Lemmas~\ref{lemma:LucasElementary}~\eqref{item:LucasSymmetryA}~and~\eqref{item:LucasNegative}),
that if $\abs{A}>1$ then $\abs{\psi_p(A)} = \psi_{\,\abs{p}}(\,\abs{A}) \geq \abs{p} > 2$.

We will first show that $B = -p$ is absurd. Recall that $B \equiv p \pmod{A-2}$ so assuming $B = -p$, we get that $-2p \equiv 0 \pmod{A-2}$. In that case, because $p\neq 0$, we have $\abs{A-2} \leq 2\abs{p} = 2B$. This directly contradicts $\abs{A-2} > 2B$ established earlier.

In order to now conclude, suppose that there is a non-zero integer $z \in \mathbb{Z}$ such that $B \mp p = 2Cz$. Then $\abs{z} \geq 1$ and so the triangle inequality implies
\begin{align*}
\abs{B} &\geq \abs{B \mp p} - \abs{p} \\
&\geq 2 \abs{C} \abs{z} - \abs{p} \\
&\geq 2 \abs{C} - \abs{p} \\
&= \abs{C} + \psi_{\,\abs{p}}(\,\abs{A}) - \abs{p} \\
&\geq \abs{C}.
\end{align*}
Finally, monotonicity and $\abs{p} \geq 2$ imply that $\abs{C} = \psi_{\,\abs{p}}(\,\abs{A}) \geq \psi_2(\,\abs{A}) = \abs{A}$ so we find
\[
\abs{B} = B \geq \abs{A},
\]
which is absurd by our original assumption. This shows that $B = p$. Indeed, we have shown $C = \psi_B(A)$, concluding the proof.
\end{proof}

\Newpage

Finally, we will set up two more technical lemmas for the proof of Theorem~\ref{step:Sun4}.

\begin{lemma}
\label{lemma:CKlemmaNew}
Let $U, V, X$ be integers. If $V,X\ge 1$ and $|U|\ge 2X$, then the rational number
\begin{equation}
\rho:=\frac{(V+1)^{2X}}{V^X}
\label{eq:Rho_new}
\end{equation}
satisfies
\[
\left| \frac{\psi_{2X+1}(U(V+1))}{\psi_{X+1}(U^2V)}-\rho\right|
\le \rho \frac{2X}{|U|V}
\;.
\]

\end{lemma}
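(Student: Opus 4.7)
The plan is to write $\psi_n(A) = A^{n-1}\phi_n(1/A^2)$ for a polynomial $\phi_n$ that is close to $1$, and then estimate the resulting ratio $\phi_{2X+1}(1/A_1^2)/\phi_{X+1}(1/A_2^2)$, where $A_1 := U(V+1)$ and $A_2 := U^2V$. First I would observe that both sides of the claimed inequality, as well as $\rho$, are invariant under $U \mapsto -U$: this flips the sign of $A_1$ and leaves $A_2$ unchanged, and since $2X+1$ is odd, Lemma~\ref{lemma:LucasElementary}~\eqref{item:LucasSymmetryA} gives $\psi_{2X+1}(-A_1) = \psi_{2X+1}(A_1)$. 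So without loss of generality $U > 0$, and the hypotheses force $A_1, A_2 \ge 4$.

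Next I would define $\phi_n(t)$ by $\phi_0 = 0$, $\phi_1 = 1$, and $\phi_{n+1}(t) = \phi_n(t) - t\,\phi_{n-1}(t)$; dividing the Lucas recurrence by $A^n$ shows $\phi_n(1/A^2) = \psi_n(A)/A^{n-1}$. For $A \ge 2$, equivalently $t \le 1/4$, Lemma~\ref{lemma:LucasElementary}~\eqref{item:LucasMonotonicity} and~\eqref{item:LucasGrowth} give $0 < \psi_n(A) \le A^{n-1}$, so $\phi_n(t) \in (0,1]$ for $n \ge 1$; unfolding the recurrence then yields $\phi_n(t) = 1 - t\sum_{k=0}^{n-2}\phi_k(t) \ge 1 - (n-2)t$ (using $\phi_0 = 0$).

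A direct computation now gives $A_1^{2X}/A_2^X = U^{2X}(V+1)^{2X}/(U^{2X}V^X) = \rho$. Setting $\epsilon_1 := 1 - \phi_{2X+1}(1/A_1^2) \in [0,(2X-1)/A_1^2]$ and $\epsilon_2 := 1 - \phi_{X+1}(1/A_2^2) \in [0,(X-1)/A_2^2]$, one has
\[
\left|\frac{\psi_{2X+1}(A_1)}{\psi_{X+1}(A_2)} - \rho\right|
= \rho \cdot \frac{|\epsilon_2 - \epsilon_1|}{1 - \epsilon_2}
\le \rho \cdot \frac{\max(\epsilon_1,\epsilon_2)}{1 - \epsilon_2}.
\]

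Finally I would check the elementary arithmetic under the hypotheses $U \ge 2X$ and $V \ge 1$. The bound $\epsilon_1 \le X/(UV)$ reduces to $2V \le U(V+1)^2$, which is immediate from $U \ge 2$; similarly $\epsilon_2 \le X/(UV)$ reduces to $1 \le U^3V$. Moreover $U \ge 2X$ yields $\epsilon_2 \le 1/(16X^3) \le 1/16$, so $1 - \epsilon_2 \ge 15/16$. Combining, the right-hand side is at most $(16/15)\,\rho X/(UV) < 2\rho X/(UV)$, as claimed. I expect the only mildly fiddly part is this last arithmetic bookkeeping: confirming the hypotheses are sharp enough is a matter of chasing the inequalities carefully, but none of the individual estimates is difficult.
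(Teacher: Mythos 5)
Your proof is correct, and it takes a genuinely different route from the paper's. The paper applies the two-sided exponential-growth bound $(A-1)^{n} < \psi_{n+1}(A) < A^n$ from Lemma~\ref{lemma:LucasElementary}~\eqref{item:LucasGrowth} to both the numerator and denominator separately, then closes the gap using Bernoulli's inequality twice (once for each side of the estimate). You instead normalize the Lucas sequence to $\phi_n(1/A^2) = \psi_n(A)/A^{n-1}$, derive the telescoped identity $\phi_n(t) = 1 - t\sum_{k=0}^{n-2}\phi_k(t)$, and thereby exhibit the ratio exactly as $\rho\,(1-\epsilon_1)/(1-\epsilon_2)$ with explicitly controlled $\epsilon_1, \epsilon_2$. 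This makes the structure of the error more transparent (it is precisely $\rho\,(\epsilon_2-\epsilon_1)/(1-\epsilon_2)$) and in fact yields the slightly sharper constant $16/15$ in place of $2$; the paper's approach is shorter to write out but leaves the cancellation implicit in the Bernoulli bookkeeping. Both proofs need the one-sided growth bound $\psi_n(A) \le A^{n-1}$, so neither reduces the dependencies. One small remark for the write-up: the inequality $\phi_n(t) \ge 1-(n-2)t$ is only valid for $n\ge 2$ (for $n=1$ the right side is $1+t > \phi_1(t)=1$); you only apply it to $n=X+1\ge 2$ and $n=2X+1\ge 3$, so this does not affect the argument, but it is worth stating the range explicitly.
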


\begin{proof}
Using the symmetry of Lucas sequences established in Lemma~\ref{lemma:LucasElementary} \eqref{item:LucasSymmetryA}, we have $\psi_{2X+1}(U(V+1)) = \psi_{2X+1}(|U|(V+1))$.
It now follows from the exponential growth property Lemma~\ref{lemma:LucasElementary} \eqref{item:LucasGrowth} that
\[
\frac{(|U|(V+1)-1)^{2X}}{(U^2V)^X}
\le
\frac{\psi_{2X+1}(|U|(V+1))}{\psi_{X+1}(U^2V)}
\leq \frac{(|U|(V+1))^{2X}}{(U^2V-1)^X}
\;.
\]
Consider first the lower bound. By the Bernoulli inequality,
\[
	\rho\left(1-\frac{2X}{|U|(V+1)}\right)\leq \rho\left(1-\frac{1}{|U|(V+1)}\right)^{2X}=\frac{(|U|(V+1)-1)^{2X}}{(U^2V)^X}
\;.
\]
Combining the two lines above and subtracting $\rho$ on both sides yields the required lower bound
\[
-\rho\frac{2X}{|U|V}\le -\rho \frac{2X}{|U|(V+1)}\leq \frac{\psi_{2X+1}(U(V+1))}{\psi_{X+1}(U^2V)}-\rho
\;.
\]

\medskip

For the upper bound, the Bernoulli inequality and our hypotheses give
\[
\left(1+\frac{2X}{U^2V}\right)\left( 1-\frac{1}{U^2V}\right)^{X}
\ge 
\left(1+\frac{2X}{U^2V}\right)\left( 1-\frac{X}{U^2V}\right)
=1+\frac{X}{U^2V}\left(1-\frac{2X}{U^2V}\right)
>1
\;,
\]
which implies
\[
\left( 1-\frac{1}{U^2V}\right)^{-X}-1 \le \frac{2X}{U^2V}
\;.
\]
Finally, the calculation
\[
\frac{(U(V+1))^{2X}}{(U^2V-1)^X}
=\frac{(V+1)^{2X}}{V^X}\left(\frac{U^2V}{U^2V-1}\right)^X=\rho \left( 1-\frac{1}{U^2V}\right)^{-X}
\;
\]
allows to conclude after again subtracting $\rho$ on both sides.
\end{proof}

\begin{lemma}
Given integers $X,Y,b,g,h,l,w$, define the variables $U,V,A,B,C$ as in Definition~\ref{def:VariableDefinition}. Then $|Y|\ge 2$ and \eqref{eq:pAppWW_new} imply $bw\neq 0$. 
\label{lemma:bwneq0}
\end{lemma}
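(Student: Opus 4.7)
The plan is to proceed by contradiction: assume $bw = 0$, substitute into the divisibility condition \eqref{eq:pAppWW_new}, and show that the resulting constraint on $A$ contradicts the hypothesis $|Y| \ge 2$.

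First, if $bw = 0$ then $b^2 w^2 = 0$ as well, so the expression $3bwC - 2(b^2 w^2 - 1)$ collapses to $2$. Condition~\eqref{eq:pAppWW_new} therefore forces $(2A - 5) \mid 2$.

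Next, I would exploit the parity of $A$. Since $U = 2lXY$ is even by \eqref{eq:Def_U}, the number $A = U(V+1)$ is even by \eqref{eq:Def_A}, hence $2A - 5$ is odd. An odd integer dividing $2$ must be $\pm 1$, so $2A - 5 \in \{-1, 1\}$, giving $A \in \{2, 3\}$. Parity then rules out $A = 3$, leaving only $A = 2$.

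The last step is to show $A = 2$ is impossible under $|Y| \ge 2$. Writing out $A = 2lXY(4gwY + 1)$, the equation $A = 2$ would require $lXY(4gwY + 1) = 1$. Since $4gwY + 1$ is a non-zero integer (because $4gwY$ is an integer, never equal to $-1$), we have $|4gwY + 1| \ge 1$. If $lX \ne 0$, then $|lXY| \ge 2|lX| \ge 2$ by the hypothesis $|Y| \ge 2$, forcing $|lXY(4gwY+1)| \ge 2 \ne 1$. If instead $lX = 0$, then $lXY(4gwY+1) = 0 \ne 1$. Either way we obtain a contradiction, so $bw \ne 0$.

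The only mildly subtle point is remembering that $4gwY + 1$ can never vanish for integer arguments, which is what rules out the possibility that the product is made small by one factor being zero while another is large. Everything else is straightforward algebraic bookkeeping on Definition~\ref{def:VariableDefinition}.
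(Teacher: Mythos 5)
Your proposal is correct and follows essentially the same route as the paper: assume $bw=0$, reduce \eqref{eq:pAppWW_new} to $(2A-5)\mid 2$, use oddness of $2A-5$ to conclude $A\in\{2,3\}$, and then derive a contradiction from $A=U(V+1)=2lXY(V+1)$ together with $|Y|\ge 2$. The paper states the final contradiction tersely, whereas you spell out the case analysis (parity kills $A=3$; the factor structure and $4gwY+1\neq 0$ kill $A=2$); this is just added detail, not a different argument.
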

\begin{proof}
    Suppose $bw=0$. Then \eqref{eq:pAppWW_new} reduces to $(2A-5)|2$. Thus $2A-5=\pm 1$, hence $A\in\{2,3\}$. However, using \eqref{eq:Def_A} and \eqref{eq:Def_U} we have $A=U(V+1)=2lYX(V+1)$, and since $|Y|\ge 2$ by hypothesis this is a contradiction.
\end{proof}

\Newpage

\subsection{From Exponential to Ordinary Diophantine Equations}

Although it is well known that the exponential term and the divisibility of the binomial coefficient from Theorem~\ref{step:polynomial-to-binomial} are Diophantine expressions, the elegance of the theorem below lies in jointly encoding them in order to save on the number of unknowns. 

\begin{mainstep}[Bridge Theorem]
\label{step:Sun4} \sunref{\cite[Theorems 4.1 and 4.2]{Sun}}
Given integers $b\ge 0$, $X\geq 3b$, $Y\geq \max \{ b,2^8 \}$ and $g\ge 1$, the following are equivalent:
\begin{enumerate}
\item 
\label{proofitem:Binom}
 $b\in 2\uparrow$ and $Y | \binom{2 X}{X}$;

\item 
\label{proofitem:Dioph2} $\exists h,k,l,w,x,y\geq b$ such that the Diophantine conditions \eqref{eq:DFI}, \eqref{eq:UVK}, \eqref{eq:pAppWW_new} and \eqref{eq:g} hold;

\addtocounter{enumi}{-1}
\renewcommand{\theenumi}{\arabic{enumi}a}
\renewcommand{\labelenumi}{(\theenumi)}
\item
\label{proofitem:Dioph1} $\exists h,k,l,w,x,y\in\Z$ with $lx\neq 0$ such that the Diophantine conditions \eqref{eq:DFI}, \eqref{eq:UVK}, \eqref{eq:pAppWW_new} and \eqref{eq:weakineq} hold.
\end{enumerate}
Moreover, the implication \eqref{proofitem:Binom} $\implies$ \eqref{proofitem:Dioph2} holds already if $X\ge b$ (rather than $X\ge 3b$).
\end{mainstep}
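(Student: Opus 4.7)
The plan is to prove the chain (1) $\Rightarrow$ (2) $\Rightarrow$ (1a) $\Rightarrow$ (1), which establishes the three-way equivalence together with the refinement that (1) $\Rightarrow$ (2) needs only $X \geq b$.

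For (1) $\Rightarrow$ (2), I would construct the witnesses explicitly. Writing $b = 2^{B_1}$ and $B = 2X+1$, I set $w := 2^B/b$ so that $bw = 2^B$ is a power of two; the forward direction of Lemma~\ref{lemma:LucasCongruenceNEW} then yields \eqref{eq:pAppWW_new} immediately. The hypothesis $Y \mid \binom{2X}{X}$ is precisely what lets me define the integer $l := M/Y$, where $M := \sum_{k=X}^{2X} \binom{2X}{k} V^{k-X}$ with $V = 4gwY$: each term with $k > X$ already carries a factor of $V^{k-X}$ (hence of $Y$), and $Y$ divides the $k=X$ term $\binom{2X}{X}$ by assumption. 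With $U, A$ now determined, Corollary~\ref{cor:LucasSequenceCongruence} gives $\psi_B(A) \equiv B \pmod{A-2}$, so I can pick $h \geq b$ with $C = B + (A-2)h = \psi_B(A)$; a parallel argument produces $k \geq b$ with $J = X+1 + k(U^2V-2) = \psi_{X+1}(U^2V)$, yielding \eqref{eq:UVK} via Corollary~\ref{cor:PellEquation}. Lemma~\ref{lemma:Sun4.3} then supplies arbitrarily large $x, y \geq b$ with \eqref{eq:DFI}. Finally, \eqref{eq:g} follows by combining Lemma~\ref{lemma:CKlemmaNew} (which controls $|C/J - \rho|$ for $\rho := (V+1)^{2X}/V^X$) with the direct computation $\rho - lY = \sum_{k=0}^{X-1}\binom{2X}{k} V^{-(X-k)}$, both of which are much smaller than $1/(4g)$ once $V$ is large.

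The step (2) $\Rightarrow$ (1a) is essentially automatic: Lemma~\ref{lemma:bwneq0} and $Y \geq 2^8$ rule out $bw = 0$, forcing $b \geq 1$, so $l, x \geq b \geq 1$ gives $lx \neq 0$, while $g \geq 1$ gives $1/(16g^2) \leq 1/4$. The real work is (1a) $\Rightarrow$ (1). First, \eqref{eq:DFI} with Lemma~\ref{lemma:PellDiophantine2} gives $C = \psi_B(A)$ once I verify $1 < B < |A|/2 - 1$ from $X \geq 3b$ and the explicit form $A = U(V+1)$ (using $l \neq 0$, $Y \geq 2^8$). Next, \eqref{eq:UVK} with Corollary~\ref{cor:PellEquation} gives $J = \psi_m(U^2V)$ for some $m \in \Z$; the defining congruence $J \equiv X+1 \pmod{U^2V-2}$ together with Corollary~\ref{cor:LucasSequenceCongruence} forces $m \equiv X+1 \pmod{U^2V-2}$, after which a size comparison (any admissible shift of $m$ would make $|\psi_m(U^2V)|$ wildly incompatible with $|C/J|$ being within $1/2$ of the nonzero integer $lY$) pins down $|m| = X+1$, so $J = \pm\psi_{X+1}(U^2V)$. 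Then \eqref{eq:pAppWW_new} and the converse of Lemma~\ref{lemma:LucasCongruenceNEW}, once $|A| \geq \max\{(bw)^4, 2^{4B}\}$ is verified, give $bw = 2^B$, so $b \mid 2^B$ and $b \in 2\uparrow$. Finally, rewriting $\rho = M + \epsilon$ with $M = \sum_{k=X}^{2X}\binom{2X}{k} V^{k-X} \in \Z$ and $0 < \epsilon < 1/4$, combining Lemma~\ref{lemma:CKlemmaNew}'s estimate with \eqref{eq:weakineq} yields $|M - lY| < 1$, hence $M = lY$; reducing modulo $Y$ (noting $V \equiv 0 \pmod Y$) gives $\binom{2X}{X} \equiv M \equiv 0 \pmod Y$, i.e.\ $Y \mid \binom{2X}{X}$.

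The hard part will be the size bookkeeping in (1a) $\Rightarrow$ (1): checking that $|A|$ is genuinely large enough to apply both Lemma~\ref{lemma:PellDiophantine2} and the converse of Lemma~\ref{lemma:LucasCongruenceNEW}, pinning down the index $m = X+1$ from only the residue constraint modulo $U^2V-2$ together with the rather weak inequality \eqref{eq:weakineq}, and controlling the cumulative approximation error $|C/J - lY|$ tightly enough to force the integer equality $M = lY$. These are precisely the places where the hypotheses $X \geq 3b$ and $Y \geq \max\{b, 2^8\}$ are expected to be indispensable.
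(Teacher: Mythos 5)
Your proposal follows essentially the same route as the paper's proof: the chain (1) $\implies$ (2) $\implies$ (1a) $\implies$ (1) with the same explicit witnesses ($w = 2^B/b$, $l = \lfloor\rho\rfloor/Y$, and $h,k$ chosen so that $C = \psi_B(A)$ and $J = \psi_{X+1}(U^2V)$) and the same core inputs (Lemmas~\ref{lemma:Sun4.3}, \ref{lemma:PellDiophantine2}, \ref{lemma:LucasCongruenceNEW}, \ref{lemma:CKlemmaNew}, \ref{lemma:bwneq0} and Corollaries~\ref{cor:PellEquation}, \ref{cor:LucasSequenceCongruence}). One minor imprecision: since $m \equiv X+1 \pmod{U^2V - 2}$ and $U^2V - 2 > 2(X+1)$, the congruence combined with the size bound already forces $m = X+1$ exactly (not merely $|m| = X+1$), yielding $J = \psi_{X+1}(U^2V)$ with the correct sign --- which you do need so that Lemma~\ref{lemma:CKlemmaNew} applies directly to the ratio $C/J$.
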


\begin{proof}
The main idea is to translate the exponential condition $b\in 2\uparrow$ into an expression involving Lucas sequences using Lemma~\ref{lemma:LucasCongruenceNEW}. Then, based on Pell's equation \eqref{Eq:PellEquation}, the Lucas sequence can be expressed as a Diophantine expression \eqref{eq:DFI} using Lemmas~\ref{lemma:Sun4.3} and Lemma~\ref{lemma:PellDiophantine2}.

\needspace{3\baselineskip}
\proofstep{The implication \eqref{proofitem:Binom}$\implies$\eqref{proofitem:Dioph2}}{ (assuming only $X\ge b$).} 

\proofsubstep{Constructing $h,k,l,w,x,y$ and showing a common lower bound.}{} 
We first construct the variable $w$. We estimate, using \eqref{eq:Def_B} 
\[
2^B=2^{2X+1}> 4^X\geq X^2\geq b^2\geq b
\;.
\]
Since $b\in 2\uparrow$ by hypothesis, $w:=2^{B}/b$ is an integer and $w\geq b$. 

 Next we construct $l$. A central element of the proof is the ratio $\rho \in \Q$ that already appeared in \eqref{eq:Rho_new}. Note that $V=4gwY$ (defined in \eqref{eq:Def_V}) is positive because $Y$ and $g$ are by assumption. We then expand using the binomial theorem
\begin{align}
\rho
&:=\frac{(V+1)^{2X}}{V^X}=\frac{1}{V^X}\cdot \sum_{i=0}^{2X}\binom{2X}{i}V^i\\&=\frac{1}{V}\sum_{i=0}^{X-1}\binom{2X}{i}\frac{1}{V^{X-1-i}}+\binom{2X}{X}+V\sum_{i=X+1}^{2X}\binom{2X}{i}V^{i-X-1}
\;.
\label{eq:RhoDecomposition}
\end{align}
The second and third terms are positive integers while the first one is in $(0,1)$:
\begin{equation}
\begin{aligned}
&\frac{1}{V}\sum_{i=0}^{X-1}\binom{2X}{i}\frac{1}{V^{X-1-i}}<\frac 1 V \sum_{i=0}^{2X}\binom{2X}{i} = \frac{2^{2X}}{4gwY} \stackrel{(*)}{\le}\frac{1}{8g}<1
\end{aligned}
\label{eq:fracrhobound}
\end{equation}
where in (*) we used $V=4gwY\ge 4gwb=4g2^B=4g2^{2X+1}$. 

We can thus separate the integer and fractional parts of $\rho$ as follows:
\begin{equation}\label{rhoineq}
\lfloor \rho \rfloor = \binom{2X}{X}+V\sum_{i=X+1}^{2X}\binom{2X}{i}V^{i-X-1}\geq V
\end{equation}
and the fractional part $\{\rho\}$ is equal to the first summand in \eqref{eq:RhoDecomposition}.

Since $Y$ divides $\binom{2X}{X}$ (by hypothesis) and $V=4gwY$, we have an integer
\begin{equation}
l:=\frac{\lfloor \rho \rfloor}{Y}\geq\frac{V}{Y}=4gw\geq w \geq b
\;.
\label{eq:rho_floor}
\end{equation}

The integer $h$ is derived from Corollary~\ref{cor:LucasSequenceCongruence}, applied to $\alpha=A$ and $m=B$: we have $\psi_B(A)\equiv B\pmod{A-2}$, so there is a unique $h$ such that 
\begin{equation}
\psi_B(A)=B+(A-2)h
\;.
\label{Eq:PsiB=}
\end{equation}
Noticing that $A=U(V+1)=2lYX(V+1)\ge 4$ and $B=2X+1\ge 3$, we use Lemma~\ref{lemma:LucasElementary} \eqref{item:LucasGrowth} to get 
\begin{align}\psi_B(A) &\ge (A-1)^{B-1} = (1+A-2)^{B-1} \nonumber \\ &\ge 1+(B-1)(A-2) + \frac{(B-1)(B-2)}{2}(A-2)^2 \ge B + (A-2)^2
\;.
\end{align}
Combining the previous two equations, we obtain $h\ge A-2\ge V>w\ge b$ as required.

Combining \eqref{Eq:PsiB=} with \eqref{eq:Def_C}, we conclude $C=\psi_B(A)$. Since $A$ is clearly even, we can then use Lemma~\ref{lemma:Sun4.3} to obtain integers $x,y\ge b$.
Similarly for $k$, we apply Lemma~\ref{lemma:LucasSequenceCongruence} for $\alpha=U^2V\ge 4$ and $m=X+1\ge 2$, followed by Lemma~\ref{lemma:LucasElementary} \eqref{item:LucasCoeff2}, to obtain
\begin{equation*}
	\psi_{X+1}(U^2V)\equiv \psi_{X+1}(2)=X+1 \pmod {(U^2V-2)}
\;,
\end{equation*}
so $k$ is the integer satisfying
\begin{equation}\label{eq:k}
	\psi_{X+1}(U^2V)=X+1+(U^2V-2)k
\;,
\end{equation}
which is just $J$. To show that $k\ge b$, we distinguish the cases $X=1$ and $X>1$. In the first case, the hypotheses $X\ge b$ and $b\ge 1$ imply $b=1$. Hence \eqref{eq:k} becomes
\[
U^2V= \psi_2(U^2V)= 	2+(U^2V-2)k
\]
which implies $k=1=b$. 

If $X\ge 2$, again by Lemma~\ref{lemma:LucasElementary}~\eqref{item:LucasGrowth}
\begin{align*}
    J&=\psi_{X+1}(U^2V)
    \geq (U^2V-1)^X\\&=(1+(U^2V-2))^X 
    \ge 1+X+(U^2V-2)^2.
\end{align*}

Comparing this with \eqref{eq:k}, we obtain $k\ge U^2V-2\ge 4b-2\ge b$.

\proofsubstep{Deriving relations \eqref{eq:DFI} to \eqref{eq:g}.}{} Equation \eqref{eq:DFI} immediately holds by construction and Lemma~\ref{lemma:Sun4.3}. We continue with \eqref{eq:UVK}, which claims that $(U^{4}V^2-4)K^2+4$ is a square. This is Pell's equation, and the claim holds because of Lemma~\ref{lemma:PellEquation} and \eqref{eq:k}.

To establish \eqref{eq:pAppWW_new} we use Lemma~\ref{lemma:LucasCongruenceNEW} with $W=2^B=bw$, so $C=\psi_B(A)$ satisfies
\[
3bwC\equiv 2(b^2w^2-1) \pmod{2A-5}
\;,
\]
and the difference between both sides is \eqref{eq:pAppWW_new}.

Left is \eqref{eq:g}. By \eqref{rhoineq} and \eqref{eq:rho_floor} we have
\begin{equation}
	U^2V\geq U = 2 \,l\, Y X = 2 \lfloor \rho \rfloor X \geq 2VX \geq 2X
\;.
\end{equation}
Since $C=\psi_B(A)=\psi_{2X+1}(U(V+1))$, it now follows from Lemma~\ref{lemma:CKlemmaNew} that
\[
\left| \frac{C}{J}-\rho \right| \leq \frac{2X}{UV}\rho = \frac{\rho}{lVY}=\frac{\rho}{\lfloor \rho \rfloor} \cdot \frac{1}{V}\leq \frac{2}{V}
\;.
\]
Since $V/4g=wY\geq wb=2^B= 2^{2X+1}\geq 8$ it follows that 

\[
	\left| \frac{C}{J}-\rho \right| \leq \frac{1}{16g}
\;.
\]
Applying the triangle inequality and \eqref{eq:fracrhobound}, and remembering that $lY=\lfloor\rho\rfloor$, we obtain

\[
\left| \frac{C}{J}-lY \right|\le  \left| \frac{C}{J}-\rho \right|+\left| \rho -\lfloor \rho \rfloor \rule{0pt}{11pt} \right| < \frac{1}{16g}+\frac{1}{8g}<\frac{1}{4g}
\;.
\]
Squaring yields the desired statement and concludes the first part of the proof. 

\bigskip

\proofstep{The implication \eqref{proofitem:Dioph2} $\implies$ \eqref{proofitem:Dioph1}.}{}

By Lemma~\ref{lemma:bwneq0}, we have $b\neq 0$. Hence, $h,k,l,w,x,y\ge b\ge 1$. Since also \eqref{eq:g}$\implies$\eqref{eq:weakineq}, this direction is clear.

\proofstep{The implication \eqref{proofitem:Dioph1} $\implies$ \eqref{proofitem:Binom}.}{}

Given the variables $X,Y,h,k,l,x,w,y$, the variables $D,F,I,\dots,J$ are introduced in Definition~\ref{def:VariableDefinition}, \eqref{eq:Def_D}--\eqref{eq:Def_J}, and our hypothesis is that they satisfy the simple Diophantine expressions \eqref{eq:DFI}, \eqref{eq:UVK}, \eqref{eq:pAppWW_new} and \eqref{eq:weakineq}. The first step is to derive quantities equal to some Lucas sequences, from there we derive exponential and binomial expressions. 

\proofsubstep{Deriving Lucas sequences.}{} Since $Y\ge 2^8$ by assumption, we have $bw\neq 0$ by Lemma~\ref{lemma:bwneq0}. So $b\ge 1$ and $X\geq3b\geq3$. Since $Y\ge 2^8$ and $2^{10}|V$ by \eqref{eq:Def_V} we obtain a lower bound for $A$:
 \[
 |A|=|U(V+1)|\geq 2^{10}|U|=2^{11}|l|XY\geq 2^{11}XY \geq 2^{19}
\;.
 \]
Therefore $|A|/2-1>2X+1=B$ (using \eqref{eq:Def_B}), 
and $X\ge 3$ implies $B\ge 7$.
By definition of $C := B+(A-2)h$ in \eqref{eq:Def_C}, 
we have $(A-2)|(C-B)$. Now everything is set up to apply Lemma~\ref{lemma:PellDiophantine2} to obtain 
\begin{equation}
C=\psi_B(A)
\label{eq:Variable_C}
\end{equation}
(note in particular that $DFI=\square$ is our hypothesis \eqref{eq:DFI}).

Since $|V+1|\le |V|+1 \le 2|V|-1 \le |U^2V|-1$, we obtain, using Lemma~\ref{lemma:LucasElementary} \eqref{item:LucasGrowth}:
 \begin{equation}\label{423}
 |C|=\psi_B(|A|)\leq |A|^{B-1}=|U(V+1)|^{2X}\leq U^{2X}(|U^2V|-1)^{2X}
\;.
 \end{equation}
 
We have $J = X+1+k(U^2V-2)$ from \eqref{eq:Def_J}. 
Hypothesis \eqref{eq:UVK} is $(U^{4}V^2-4)J^2+4\in\square$. By Pell's equation (Lemma~\ref{lemma:PellEquation})  this implies $J=\psi_R(U^2V)$ for some $R\in \Z$ (or $|J|$ for some $R\in\N$). 
Since clearly $U^2V>2$, we obtain the following congruence relation, using Lemma~\ref{lemma:LucasSequenceCongruence} and Lemma~\ref{lemma:LucasElementary} \eqref{item:LucasCoeff2}:
\[
X+1 \equiv J = \psi_R(U^2V)\equiv \psi_R(2)=R \quad (\bmod \, (U^2V-2))
\;.
\]
Thus we may write $R=X+1+r(U^2V-2)$ for some $r\in \Z$.

Our next claim is $r=0$. Indeed, if $r\neq 0$, then
\[
|R|\ge |r||U^2V-2|-|X+1|\ge|U^2V|-2-X-1 > 4X+4-2-X-1>3X
\]
and using Lemma~\ref{lemma:LucasElementary} \eqref{item:LucasMonotonicity}  and \eqref{item:LucasNegative} 
\[
|J|=|\psi_R(U^2V)|=\psi_{|R|}(|U^2V|)\ge(|U^2V|-1)^{|R|-1}\ge (|U^2V|-1)^{3X}.
\]
Since $|U^2V|\ge 4U^2>2U^2+1$, combining this with \eqref{423} yields
\[
\left|\frac{C}{J}\right|\le\left(\frac{U^2}{|U^2V|-1}\right)^X<\left(\frac{1}{2}\right)^X\le \frac{1}{2}
\;.
\]
But together with \eqref{eq:weakineq} this implies
\[
|lY|\leq \left|lY-\frac{C}{J}\right|+\left|\frac{C}{J}\right|<\frac{1}{2}+\frac{1}{2}=1
\]
which contradicts $lY\neq 0$. Therefore, we indeed have $r=0$ and $R=X+1$, hence 
\begin{equation}
J=\psi_{X+1}(U^2V)
\label{eq:Variable_K}
\;.
\end{equation}

\medskip

\proofsubstep{Showing that $b$ is a power of $2$.}{} By \eqref{eq:weakineq} we have

\begin{equation}
\left|\frac CJ\right|\le \left| \frac CJ -lY\right|+|lY|<\frac 12+|lY|.
\label{eq:cj12}
\end{equation}
We can bound $|C/J|$ as follows:
\[
    \left|\frac{C}{J}\right| = \frac{\psi_{2X+1}(|A|)}{\psi_X(|U^2V|)} 
    \ge \frac{(|A|-1)^{2X}}{|U|^{2X}|V|^{X+1}} 
    \ge \frac{|U|^{2X}\left(|V+1|-\left|\frac1U\right|\right)^{2X}}{|U|^{2X}|V|^X} \ge \frac{(|V|-2)^{2X}}{|V|^X}.
\]
Note that in the case $V > 0$, the same reasoning yields
\begin{equation}
    \left|\frac{C}{J}\right| \ge V^X
    \label{eq:rho and V and L}
\end{equation}

Recall that $|V| = 4g|bw|Y \ge 2^{10}$. We can thus easily check that $(2|V|-4)^2 \ge |V|(|V-1|)$, meaning that
\[
\frac{(|V|-2)^{2X}}{|V|^X} \ge \left(\frac{|V|-1}{2}\right)^X
\]
Together with \eqref{eq:cj12} this yields
\[
|l|Y \ge \left(\frac{|V|-1}{2}\right)^X - \frac12 \ge \frac12 \left(\frac{|V|-1}{2}\right)^X.
\]
Now, notice that:
\begin{equation}\label{430}
    |A| \ge |U|(|V|-1) \ge 2 |l| XY (|V|-1) \ge \left(\frac{|V|-1}{2}\right)^{X+1}\\
\end{equation}

This together with $(|V|-1)/2>2^8$ then yields $|A|\ge|bw|^{X+1}\ge (bw)^4$ (since $X\geq 3$) and $|A| \ge 2^{8(X+1)} \ge 2^{4B}$. As $C=\psi_B(A)$ together with \eqref{eq:pAppWW_new} we have enough to apply Lemma~\ref{lemma:LucasCongruenceNEW} in the backward direction with $W=bw$ to obtain $bw=2^{2X+1}$.

\proofsubstep{Divisibility of the binomial coefficient.}{}
Once again we will look at $\rho$ as in \eqref{eq:RhoDecomposition}
\[
\rho=\{\rho\}+\lfloor\rho\rfloor= \frac{1}{V}\sum_{i=0}^{X-1}\binom{2X}{i}\frac{1}{V^{X-1-i}}+\binom{2X}{X}+V\sum_{i=X+1}^{2X}\binom{2X}{i}V^{i-X-1}
\;.
\]
From $bw=2^B$ we conclude
\begin{equation}
V=4gwY\ge 4gwb \ge 4 \cdot 2^{2X+1}= 8 \cdot 2^{2X}\ge 2^{9}
\label{eq:V is power of 2}
\end{equation}
and hence
\begin{equation}
\{\rho\}=\frac{1}{V}\sum_{i=0}^{X-1}\binom{2X}{i}\frac{1}{V^{X-1-i}}<\frac{1}{V}\sum_{i=0}^{2X}\binom{2X}{i}=\frac{2^{2X}}{V}\le \frac{1}{8}
\;.
\label{eq:V8}
\end{equation}
Since $Y$ divides $V$, we can establish our final claim $Y|\binom{2X}{X}$ by proving $Y|\lfloor \rho\rfloor$.

We start with the following identity, remembering the definition of $\rho$
\[
\frac{\rho}{|l|Y} = \frac{(V+1)^{2X}}{V^{2X}} \frac{V^X}{|l|Y+1/2}\left(1+\frac{1}{2|l|Y}\right).
\]

Combining \eqref{eq:cj12} with \eqref{eq:rho and V and L} yields $0< V^X \le |l|Y+1/2$. Together with \eqref{eq:V is power of 2} we then obtain the bound
\begin{align*}
\frac{\rho}{|l|Y}&\le \left(1+\frac 1V\right)^{2X}\frac{V^X}{V^X} \left(1+\frac{1}{2V^X-1}\right)\\
&\le (1+2^{-2X})^{2X}\left(1+\frac{1}{2^{10}-1}\right)\\
&\le (1+2^{-6})^6 \left(1+\frac{1}{2^{10}-1}\right) < 1.1.
\end{align*}
In the third line we used $X\ge 3$ and that the function $(1+2^{-n})^n$ decreases for $n\ge 2$. Thus,
\begin{equation}
	\left| \frac{C}{J}-\rho \right|\le \rho \frac{2X}{|U|V}=\frac{\rho}{|l|Y}\cdot \frac{1}{V} < \frac{1.1}{V}\le \frac{1.1}{8\cdot 2^{2X}} <2^{-8}.
\label{eq:cjrho}
\end{equation}
where the first inequality is Lemma~\ref{lemma:CKlemmaNew}, then $U=2lXY$ from \eqref{eq:Def_U}, and once again \eqref{eq:V is power of 2}.

Now we use the triangle inequality, then \eqref{eq:V8}, \eqref{eq:cjrho} and \eqref{eq:weakineq} to estimate
\[
|\lfloor\rho\rfloor-lY|\le |\lfloor\rho\rfloor-\rho|+\left| \frac{C}{J}-\rho \right| + \left| \frac{C}{J}-lY \right| <\frac{1}{8}+2^{-8}+\frac{1}{2}<1.
\]
We conclude that $\lfloor\rho\rfloor=lY$ since both are integers. This completes the proof of the theorem.
\end{proof}

\newpage

\section{Relation-Combining}
\label{Sec:RelationCombining}
The idea of relation-combining is the following: imagine we want to check the two conditions $x > 0, y \ne 0$ with Diophantine polynomials in integer variables. A naive method would use Legendre's four squares theorem for each condition, i.e.
\begin{align*}
x &= a^2+b^2+c^2+d^2+1 \\
y^2 &= e^2+f^2+g^2+h^2+1
\end{align*}
for some $a,b,c,d,e,f,g,h \in \Z$.

However, there is a more efficient method because the two conditions together are equivalent to the single condition
\[
xy^2 = a^2+b^2+c^2+d^2+1
\;,
\]
using only $a, b, c, d \in \Z$, cutting the number of integer variables required in half.

The following lemma
allows us to simultaneously check a divisibility condition, an inequality, and integers being perfect squares with only one (!) non-negative integer variable.

\begin{lemma}\label{lemma:MPoly}
For every $q>0$ there is a polynomial $M_q$ with the property that for all integers $A_1,\dots,A_q,R,S,T$ such that $S\neq 0$ the following are equivalent:
\begin{enumerate}
\item $S|T$, $R>0$ and $A_1,\dots,A_q$ are squares,
\item $\exists n \ge 0 : \, M_q(A_1,\dots,A_q,S,T,R,n)=0$.
\end{enumerate}
Specifically, $M_q$ can be given as	
\begin{equation} \label{eq:matiyasevich-polynomial} \begin{split}
&M_q(A_1,\dots,A_q,S,T,R,n)\\
&=\prod_{\varepsilon_1,\dots,\varepsilon_q\in\{\pm1\}^q}\left(S^2n + T^2 - S^2(2R-1)\left(T^2 + X_q^q +\sum_{j=1}^q \eps_j \sqrt{A_j}X_q^{j-1}\right) \right) \\
\end{split} \end{equation}
where $X_q:=1+\sum_{j=1}^{q}A_j^2$.
\end{lemma}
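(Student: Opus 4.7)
The approach is to first interpret $M_q$ as a polynomial in $\Z[A_1,\dots,A_q,S,T,R,n]$: the defining product over all $\varepsilon\in\{\pm1\}^q$ is invariant under each involution $\sqrt{A_j}\mapsto-\sqrt{A_j}$ (which merely permutes the $2^q$ factors), so iteratively applying the fact that any polynomial invariant under $x\mapsto-x$ is a polynomial in $x^2$ shows that $M_q$ genuinely has integer coefficients; this can be verified inductively using $(u+v)(u-v)=u^2-v^2$. Once this is in hand, the two implications can be analyzed separately.

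For $(1)\Rightarrow(2)$, assume $A_j=a_j^2$ with $a_j\in\N$, $S\mid T$, and $R>0$. I would choose the uniform sign $\varepsilon=(+1,\dots,+1)$: the corresponding factor is $S^2 n+T^2-S^2(2R-1)W$ with $W:=T^2+X_q^q+\sum_{j=1}^q a_j X_q^{j-1}\in\Z$. Setting $n:=(2R-1)W-T^2/S^2$ makes this factor vanish; the hypothesis $S\mid T$ makes $n$ an integer, and $2R-1\ge 1$ together with $W\ge T^2\ge T^2/S^2$ (using $|S|\ge 1$) makes $n\ge 0$, so the whole product vanishes.

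The substantive direction is $(2)\Rightarrow(1)$. Since $M_q$ evaluates to the integer $\prod_\varepsilon F_\varepsilon$ with $F_\varepsilon\in\C$, vanishing forces $F_{\varepsilon^*}=0$ for some $\varepsilon^*$, giving
\[
S^2 n+T^2=S^2(2R-1)\Bigl(T^2+X_q^q+\sum_j\varepsilon^*_j\sqrt{A_j}\,X_q^{j-1}\Bigr).
\]
I would extract squareness, positivity, and divisibility in that order. Rewriting $\sqrt{A_j}=i\sqrt{|A_j|}$ for $A_j<0$, the imaginary part of the right-hand side must vanish. Decomposing $|A_j|=a_j^2 m_j$ with $m_j$ square-free and using $\Q$-linear independence of $\{\sqrt m:m\ge 1\text{ square-free}\}$, this splits into one group per $m$, each of the form $\sum_{j:m_j=m,\,A_j<0}\varepsilon^*_j a_j X_q^{j-1}=0$; since $X_q=1+\sum_k A_k^2$ guarantees $a_j<X_q$, uniqueness of signed base-$X_q$ expansions forces every $a_j=0$, contradicting $A_j<0$. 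Hence all $A_j\ge 0$. The same argument applied to the real part, now demanding rationality rather than zero, forces every $A_j$ with square-free part $>1$ to have $a_j=0$, so every $A_j$ is a perfect square. Finally, $W:=T^2+X_q^q+\sum\varepsilon^*_j\sqrt{A_j}X_q^{j-1}$ is now an integer, and $S^2 n=S^2(2R-1)W-T^2$ yields $S^2\mid T^2$ hence $S\mid T$; the bound $|\sum_j\varepsilon^*_j\sqrt{A_j}X_q^{j-1}|<X_q^q$ gives $W>T^2\ge 0$, so if $R\le 0$ then $S^2 n\le -S^2 W-T^2<0$, contradicting $n\ge 0$, forcing $R>0$.

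The main obstacle I anticipate is the squareness step: it combines classical $\Q$-linear independence of distinct square-free square roots with uniqueness of signed base-$X_q$ expansions, and the size condition baked into $X_q=1+\sum_j A_j^2$ is precisely what makes the latter go through. The edge case $A_j=0$ (vacuously a square) is benign, and checking $M_q\in\Z[A_1,\dots,R,n]$ at the outset is a clean but nontrivial bookkeeping exercise.
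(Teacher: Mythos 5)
Your proof is correct, and it reconstructs essentially the standard argument of Matiyasevich--Robinson~\cite[Theorem~III]{MR75}, which the paper cites without reproducing: the involution $\sqrt{A_j}\mapsto-\sqrt{A_j}$ permutes the $2^q$ factors and so establishes integrality of $M_q$; the all-positive sign choice together with $n:=(2R-1)W-T^2/S^2$ handles the forward implication; and for the converse, the vanishing of some factor $F_{\varepsilon^*}$ combined with $\Q$-linear independence of square roots of distinct square-free integers and the bound $a_j<X_q$ (giving uniqueness of the signed base-$X_q$ expansion) yields squareness of the $A_j$, after which divisibility and positivity follow from the resulting integer identity. So the proposal matches the source proof in all essentials.
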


This lemma appears in Matiyasevich-Robinson~\cite[Theorem III]{MR75}, so we do not reproduce a proof here. However, we did formalize this result.

\subsection{Construction of a Polynomial with 9 Unknowns in \texorpdfstring{$\Z$ and $\N$}{Z and N}}
We can now construct the polynomial which will be the basis for our universal pair. In the first step we define a polynomial $Q$ with 9 unknowns of which 8 are integers and one is a natural number. Section~\ref{Sec:UniversalPairs} will then transform the natural number unknown into integer unknowns; yielding the desired universal pair in the integers. 

Constructing the polynomial $Q$ requires combining the various previous definitions of polynomials such as \cref{def:codingVariables}, \cref{def:VariableDefinition} and $M_q$ from Lemma~\ref{lemma:MPoly}.

\begin{definition}[The nine unknown polynomial $Q$]\label{def:nine-unknown-polynomial}
Let $P \in \Z[a, z_1, \ldots, z_\nu]$ be given. We construct a polynomial $Q \in \Z [a, f, g, h, k, l, w, x, y, n]$ with nine unknowns in several steps. 

First, let
\[
\overline{P} = P^2 + (z_{\nu + 1} - 1)^2.
\]
and define $\bLowercase$, $\X$, and $\Y$, using Definition~\ref{def:codingVariables} based on $\overline{P}$ instead of $P$. The quantities $\bLowercase, \X, \Y$ are polynomials in $a$, $f$, and $g$.
Moreover, we define $A, C, D, F, I, J, U, V$ as in Definition~\ref{def:VariableDefinition}. These are polynomials in $b$, $X$,  $Y$, and the remaining input variables of $Q$; for instance $A=A(a,f,g,h,k,l,w,x,y,n,b,X,Y)$, though not all variables occur in all polynomials. For the three last arguments $b,X,Y$ we substitute the polynomials $\bLowercase$, $\X$ and $\Y$ defined above, so that eventually $A, C, D, F, I, J, U, V$ only depend on $a,f,g,h,k,l,w,x,y,n$. 

Based on these, we define the following quantities. They are the main constituents of the relations \eqref{eq:DFI}--\eqref{eq:weakineq} which we will need to check.
\begin{equation} \begin{split}
A_1 &:= \bLowercase \\
A_2 &:= DFI \\
A_3 &:= (U^4V^2-4)J^2+4 \\
S &:= 2A-5 \\
T &:= 3\bLowercase wC-2(\bLowercase^2w^2-1)
\end{split}
\label{eq:A123ST_def}
\end{equation}
We set $\mu := \gamma \bLowercase^\alpha$ (with $\alpha, \gamma$ from Definition~\ref{def:codingVariables}). We can finally define
\begin{equation} \label{eq:R}
R:=f^2l^2x^2\left(8 \mu^3gJ^2-g^2\left(32(C-lJ\Y)^2 \mu^3+g^2J^2\rule{0pt}{10pt}\right)\rule{0pt}{11pt}\right).
\end{equation}
Now we use the polynomial $M_q$ from Lemma~\ref{lemma:MPoly} for $q=3$ and define
\begin{equation}
Q:=M_3(A_1, A_2, A_3, S, T, R, n)
\;.
\label{eq:PolyComposition}
\end{equation}
\end{definition}

Note that $A_1$, $A_2$, $A_3$, $S$, $T$ and $R$, and thus $Q$, are polynomials in $a$ and nine unknowns $f,g,h,k,l,w,x,y,n$ as desired.

We will consider the unknowns $f, g, h, k, w, l, x, y$ to be integers and the unknown $n$ to be a natural number. It then holds that $P$ has a solution for parameter $a$ if and only if $Q$ has a solution for parameter $a$. This is the content of the following statement. 

\begin{mainstep}[Nine unknowns over $\Z$ and $\N$] 
\label{thm:mainthm} \sunref{\cite[Theorem 1.1]{Sun}}
Let $P \in \Z[a, z_1, \ldots, z_\nu]$ and construct $Q$ based on $P$ as in Definition~\ref{def:nine-unknown-polynomial}. Then for all $a \in \N$ the following are equivalent: 
\begin{equation}\label{eq:p-a-z-zero}
\exists \mathbf{z} \in \N^\nu \colon P(a, \mathbf{z}) = 0
\end{equation}
\begin{equation}\label{eq:9-variable-polynomial}
\exists f, g, h, k, l, w, x, y \in\Z, \exists n \in \N \colon Q(a,f, g, h, k, l, w, x, y, n)=0
\;.
\end{equation}
\end{mainstep}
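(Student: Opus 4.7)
The plan is to chain the three main theorems~\ref{step:polynomial-to-binomial} and \ref{step:Sun4}, and the relation-combining Lemma~\ref{lemma:MPoly}, via the padding of Lemma~\ref{lemma:p-suitable-for-coding}. The construction of $Q$ in Definition~\ref{def:nine-unknown-polynomial} is designed so that each Diophantine condition of Theorem~\ref{step:Sun4} is matched to one input of $M_3$: the three $A_i$ are the quantities to be squares, $S\mid T$ encodes \eqref{eq:pAppWW_new}, and $R>0$ is engineered to simultaneously encode \eqref{eq:g} and the upper bound on $g$ from Theorem~\ref{step:polynomial-to-binomial}. The proof splits into the two directions.

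For the forward direction, I would first pass from $P$ to $\overline{P}$ via Lemma~\ref{lemma:p-suitable-for-coding}, giving a solution $\overline{P}(a,\mathbf{z},1)=0$ that is suitable for coding. Using Lemma~\ref{lemma:arbitraryF}, I would pick $f>0$ so that $\bLowercase:=\bLowercase(a,f)$ is both a power of $2$ and a square, and is larger than every entry of $(\mathbf{z},1)$. Theorem~\ref{step:polynomial-to-binomial} then produces $g\in[\bLowercase,\gamma\bLowercase^\alpha)$ with $\Y\mid\binom{2\X}{\X}$; Lemma~\ref{lemma:xybounds} verifies the bounds $\X\ge 3\bLowercase$ and $\Y\ge\max\{\bLowercase,2^8\}$, so the implication \eqref{proofitem:Binom}$\Rightarrow$\eqref{proofitem:Dioph2} of Theorem~\ref{step:Sun4} supplies integers $h,k,l,w,x,y\ge\bLowercase$ satisfying \eqref{eq:DFI}, \eqref{eq:UVK}, \eqref{eq:pAppWW_new} and \eqref{eq:g}. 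Verifying the hypotheses of Lemma~\ref{lemma:MPoly} (noting that $S=2A-5$ is always odd, hence nonzero) and applying it yields $n\ge 0$ with $Q=0$.

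For the backward direction, from $Q=0$ Lemma~\ref{lemma:MPoly} returns $A_1,A_2,A_3\in\square$, $S\mid T$ and $R>0$. The key extractions are: the factor $f^2l^2x^2$ in $R$ forces $f,l,x\ne 0$; a sign analysis of the remaining bracket then gives $g>0$, the bound $g<2\mu:=2\gamma\bLowercase^\alpha$, and the weak inequality \eqref{eq:weakineq}; finally $A_1=\bLowercase\in\square$ combined with integrality and $f\ne 0$ forces $f\ge 1$ and thus $\bLowercase\ge 4$. These conditions match the hypotheses of the implication \eqref{proofitem:Dioph1}$\Rightarrow$\eqref{proofitem:Binom} of Theorem~\ref{step:Sun4}, which yields $\bLowercase\in 2\uparrow$ and $\Y\mid\binom{2\X}{\X}$. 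With $g\in[0,2\gamma\bLowercase^\alpha)$, the converse direction of Theorem~\ref{step:polynomial-to-binomial} produces $z_1,\ldots,z_{\nu+1}\in[0,\bLowercase)$ satisfying $\overline{P}(a,z_1,\ldots,z_{\nu+1})=0$, and Lemma~\ref{lemma:p-suitable-for-coding} reduces this to $P(a,z_1,\ldots,z_\nu)=0$.

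I expect the main obstacle to be the algebraic verification that $R$ performs its double duty correctly: in the forward direction $R>0$ must follow from \eqref{eq:g} together with $g<\mu$, while in the backward direction $R>0$ must imply the conjunction of $lx\ne 0$, $g<2\mu$ and \eqref{eq:weakineq}. Both bounds hinge on controlling $4g(C-lJ\Y)^2$ against $J^2$ and comparing the leading term $8\mu^3 g J^2$ with $g^4J^2$; the numerical factors in $R$ were chosen so that the two uses are nearly dual. Every other step reduces to bookkeeping in propagating hypotheses through the chain of definitions, which is exactly the kind of delicate task where formal verification earns its keep.
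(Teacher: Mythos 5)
Your proposal is correct and follows essentially the same route as the paper: pad $P$ to $\overline{P}$, invoke Lemma~\ref{lemma:arbitraryF} to make $\bLowercase$ a large power of two, chain Theorems~\ref{step:polynomial-to-binomial} and~\ref{step:Sun4} through Lemma~\ref{lemma:xybounds}, and package everything with Lemma~\ref{lemma:MPoly}, with the converse extracting $f,l,x,g\neq 0$, $g\in[1,2\mu)$ and \eqref{eq:weakineq} from $R>0$ before running the chain in reverse. The only thing you leave implicit that the paper spells out is the arithmetic showing that \eqref{eq:g} together with $g<\mu$ forces $R>0$ and conversely that $R>0$ divided by $8\mu^3 g^2$ yields both $g^3<8\mu^3$ and \eqref{eq:weakineq}; you correctly identify this as the delicate point.
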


\begin{proof}
Throughout this proof we will make reference to the various quantities that appear in the construction of $Q$. We will do so without referencing \cref{def:nine-unknown-polynomial} explicitly every time. 

The constructions of $Q$ begins with defining a polynomial $\overline{P}$. The motivation for this is that in order to apply Theorem~\ref{step:polynomial-to-binomial} we need a polynomial that is suitable for coding. Lemma~\ref{lemma:p-suitable-for-coding} ensures that $\overline{P}$ has this property. 

Furthermore, observe that $\overline{P}$ has a solution for parameter $a$ if and only if $P$ has a solution for $a$ (this also follows from \cref{lemma:p-suitable-for-coding}). In other words, equation \eqref{eq:p-a-z-zero} is equivalent to 
\begin{equation} \label{eq:p-overline-zero} \tag{\ref{eq:p-a-z-zero}a}
\exists \mathbf{z} \in \N^{\nu + 1}: \overline{P}(a, \mathbf{z}) = 0	 \; .
\end{equation}
The proof of this theorem can thus proceed by showing two separate implications $\eqref{eq:p-overline-zero} \implies \eqref{eq:9-variable-polynomial}$ and $\eqref{eq:9-variable-polynomial} \implies \eqref{eq:p-overline-zero}$. 

\proofstep{Forward direction: proof that $\eqref{eq:p-overline-zero} \implies \eqref{eq:9-variable-polynomial}$}{}.
Let $z_1, \ldots, z_{\nu+1}\in\N$ be such that $\overline{P}(a, z_1, \ldots, z_{\nu+1}) = 0$. We need to show the existence of $f,g,h,k,l,w,x,y,n$ such that $Q(a,f,g,h,k,l,w,x,y,n)=0$ for the same $a$.

By Lemma~\ref{lemma:arbitraryF} there are arbitrarily large $f$ such that $\bLowercase := 1 + 3 (2a +1) f$ satisfies
\begin{equation} \label{eq:bProperties}
 \bLowercase \in \square\; \qquad \text{and} \quad \bLowercase~ \in 2 \uparrow
\;.
\end{equation}
Choose $f > 0$ large enough so that the resulting $\bLowercase$ satisfies $\bLowercase >\max\{z_1,\ldots,z_{\nu+1}\}$.

Therefore, Theorem~\ref{step:polynomial-to-binomial} can be applied in the direction \eqref{Eq:polynomial-zero} $\implies$ \eqref{Eq:divisibility-binomial}: there exists $g \in [\bLowercase, \gamma \bLowercase ^ \alpha)=[\bLowercase,\mu)$ such that
\begin{equation}
	\Y(a, f, g) \;|\; \binom{2\X(a, f, g)}{\X(a, f, g)}
\;.
\label{Eq:BinomialDivisibility}
\end{equation}
Note that $\X$ and $\Y$ are defined as polynomials in $a$, $f$, and $g$; at this point, these latter three variables now have fixed values, so $\X(a,f,g)$ and $\Y(a,f)$ are also fixed integers.

The proof in this direction will be accomplished by using Theorem~\ref{step:Sun4} in the direction \eqref{proofitem:Binom} $\implies$ \eqref{proofitem:Dioph2} and then applying Lemma~\ref{lemma:MPoly}.
So let us check the conditions of Theorem~\ref{step:Sun4}.
We have integers $g\ge \bLowercase>0$ from the discussion above.

The inequalities $\X\ge 3\bLowercase$ and $\Y\ge \max \{\bLowercase, 2^8 \}$ follow from Lemma~\ref{lemma:xybounds}. The conditions $\bLowercase \in 2\uparrow$ and $\Y|\binom{2\X}{\X} $ have been verified above in \eqref{eq:bProperties} and \eqref{Eq:BinomialDivisibility}. So indeed Theorem~\ref{step:Sun4} applies in the direction \eqref{proofitem:Binom} $\implies$ \eqref{proofitem:Dioph2} and shows that there are $h,k,l,w,x,y\ge b$ that satisfy \labelcref{eq:DFI,eq:UVK,eq:pAppWW_new,eq:g}.

The quantities $A_1$, $A_2$, $A_3$, $S$, $T$, and $R$ have been defined above in \eqref{eq:A123ST_def} and \eqref{eq:R}; at this time they are no longer polynomials but fixed integers with $S = 2A - 5 \neq 0 $. In order to apply \cref{lemma:MPoly} we need to check three conditions:
\begin{itemize}
\item $S|T$ is exactly \eqref{eq:pAppWW_new},
\item $R>0$ (see below),
\item $A_1=\bLowercase\in\square$ is \eqref{eq:bProperties}; $A_2=DFI\in\square$ is \eqref{eq:DFI}, and $A_3\in\square$ is \eqref{eq:UVK}.
\end{itemize}

To check that $R>0$, first note that $f, l, x > 0$ as well as $\mu> g > 0$. We start with \eqref{eq:g} that we established above and conclude (multiplying by $32\mu^3J^2g^2$ and using $\mu>g$)
\[
32\mu^3 g^2 (C-lJ\Y)^2 +{g^4J^2}<  2\mu^3 J^2+ \mu^3gJ^2  < 8\mu^3 g J^2
\,,
\]
which upon rearranging yields
\[
8\mu^3gJ^2 > g^2\left(32(C-lJ\Y)^2 \mu^3+g^2J^2\rule{0pt}{10pt}\right)
\,,
\]
the inequality required to establish $R>0$, based on its definition \eqref{eq:R}.

We thus conclude from \cref{lemma:MPoly} that there is an $n\ge 0$ such that
\[
0=
M_3(A_1,A_2,A_3,S,T,R,n)=
Q(a, f, g, h, k, l, w, x, y, n)
\;,
\]
where the last equality is the definition of $Q$ in \eqref{eq:PolyComposition}. This completes the proof in the forward direction.

\proofstep{Converse direction: proof that $\eqref{eq:9-variable-polynomial} \implies \eqref{eq:p-overline-zero}$}{}. Fix $a$ and let $f, g, h, k, l, w, x, y, n$ be given such that $Q(a, f, g, h, k, l, w, x, y, n) = 0$ and $n \geq 0$. All the quantities introduced in the construction of $Q$ are defined as polynomials in $a, f, g, h, k, l, w, x, y, n$; since these have fixed values, all quantities used can now be viewed as fixed integers, rather than polynomials.

By Definition~\eqref{eq:PolyComposition}, $Q(a, f, g, h, k, l, w, x, y, n) = 0$ translates into
\[
M_3(A_1, A_2, A_3, S, T, R, n) = 0
\;.
\]

\needspace{4\baselineskip}
Since $S = 2A-5 \neq 0$, we can apply \cref{lemma:MPoly} and obtain that
\begin{itemize}
    \item $S | T$,
    \item $R > 0$,
    \item $A_1, A_2$ and $A_3$ are squares.
\end{itemize}

We plan to apply Theorem~\ref{step:Sun4} in the direction \eqref{proofitem:Dioph1} $\implies$ \eqref{proofitem:Binom} and thus have to verify the various hypotheses.

We have $\bLowercase=A_1\in\square$ and in particular $\bLowercase\ge 0$.

Since $R>0$ and $R$ contains the factor $flxg$, it follows that $f\neq 0$, $l\neq 0$, $x\neq 0$ and $g \neq 0$. From $\bLowercase \ge 0$ and Definition~\ref{def:codingVariables} that $\bLowercase:=1+3(2a+1)f$, it further follows that $f>0$.

As above, spelling out the condition $R>0$ in \eqref{eq:R} and dividing by $8\mu^3g^2$ means
\[
\Big(4(C-lJ\Y)^2+\frac{g^2J^2}{8\mu^3}\Big) < \frac{J^2}{g}
\;.
\]
Since $\mu=\gamma \bLowercase ^{\alpha}>0$, we obtain the lower bound
\begin{equation}
0 \leq  \frac{g^2J^2}{8\mu ^3} \leq 4(C-lJ\Y)^2+\frac{g^2J^2}{8\mu ^3} < \frac{J^2}{g}
\;.
\label{eq:ThatInequality}
\end{equation}
This implies $J\neq 0$, $g > 0$ as well as $g^3 < (2\mu)^3$, hence $g\in[1,2\mu)$.

The purpose of the inequality $g > 0$ is to be able to use Lemma~\ref{lemma:xybounds}, which yields the desired inequalities $\X  \ge 3 \bLowercase$ as well as $\Y\ge \bLowercase$ and $\Y\ge 2^8$.

This brings us close to being able to apply Theorem~\ref{step:Sun4} as desired: all we need to check is conditions \labelcref{eq:DFI,eq:UVK,eq:pAppWW_new,eq:weakineq}.

Conditions \labelcref{eq:DFI,eq:UVK} are $A_2\in\square$ and $A_3\in\square$ as defined in~\eqref{eq:A123ST_def}. Moreover, condition \eqref{eq:pAppWW_new} is exactly $S|T$, while \eqref{eq:weakineq} follows directly from \eqref{eq:ThatInequality}, written as $4(C-lJ\Y)^2\ge 1$ (using $g\neq 0$ and $J\neq 0$).

Therefore, we can finally conclude $\bLowercase \in 2\uparrow$ and $\Y|\binom{2\X}{\X}$ from Theorem~\ref{step:Sun4}. Remember that \mbox{$g \in [1, 2\mu)$} and $\mu = \gamma \bLowercase^\alpha$, which now allows applying Theorem~\ref{step:polynomial-to-binomial} in the direction \mbox{\eqref{Eq:divisibility-binomial} $\implies$ \eqref{Eq:polynomial-zero}}. At last, we have established $\overline{P}(a,z_1,\dots,z_{\nu+1})=0$ for appropriate values \mbox{$z_1,\dots,z_{\nu+1}\ge 0$}.
\end{proof}

\newpage

\section{Universal Pairs}
\label{Sec:UniversalPairs}
In this section we will derive an explicit universal pair in the integers based on the construction of $Q$ from the previous section. The remaining step is to remove the natural number unknown $n \in \N$ by introducing additional integer unknowns.

We have previously referenced the well-known result of Legendre, that every natural number can be written as the sum of four squares. To minimize the number of additional variables introduced, however, we will use the following, more efficient, result instead.

\begin{lemma}[Gauss--Legendre theorem of three squares]
\label{lemma:threeSquares}
Let $n \in \mathbb{Z}$. Then
\[
n \geq 0 \iff \exists x,y,z \in \Z, n = x^2 + y^2 + z^2 + z
\;.
\]
\end{lemma}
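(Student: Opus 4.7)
The reverse implication is immediate: for any $z \in \Z$, the quantity $z^2+z = z(z+1)$ is the product of two consecutive integers, one of which is non-negative in absolute value bounded by the other, and a short case check (or the observation that $4(z^2+z)+1 = (2z+1)^2 \ge 0$) shows $z^2+z \ge 0$. Hence $x^2+y^2+z^2+z \ge 0$ for all $x,y,z \in \Z$.

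For the forward implication, assume $n \ge 0$. The key trick is to multiply by $4$ and shift by $1$: we have the algebraic identity
\[
4(x^2+y^2+z^2+z)+1 = (2x)^2+(2y)^2+(2z+1)^2
\;,
\]
so it suffices to exhibit a representation of $4n+1$ as a sum of three squares in which exactly one summand is odd and the other two are even. The plan is to invoke the classical Gauss--Legendre three-squares theorem, which states that a non-negative integer $m$ is a sum of three integer squares if and only if $m$ is not of the form $4^a(8b+7)$. Since $4n+1 \equiv 1 \pmod{4}$, it is certainly not of the forbidden form, and so there exist $a,b,c \in \Z$ with $4n+1 = a^2+b^2+c^2$.

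It remains to adjust the parities. Working modulo $4$, each square is $0$ or $1$, and the sum $a^2+b^2+c^2 \equiv 1 \pmod 4$ forces exactly one of $a,b,c$ to be odd and the other two to be even. After relabelling, write $a = 2x$, $b = 2y$, $c = 2z+1$ with $x,y,z \in \Z$; substituting into $4n+1 = a^2+b^2+c^2$ and simplifying yields $n = x^2+y^2+z^2+z$, as required.

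The only non-elementary input is the classical three-squares theorem, which we take as a black box; the rest is a short parity argument and the identity above. The main ``obstacle'' is really just remembering to pass to $4n+1$ so that the residue class automatically avoids the forbidden congruence $\equiv 7 \pmod 8$, which is what makes the reduction clean.
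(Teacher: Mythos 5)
Your proof is correct and follows essentially the same route as the paper: both pass to $4n+1$, invoke the classical three-squares result to write $4n+1=a^2+b^2+c^2$, use a mod-$4$ parity argument to show exactly one summand is odd, and then peel off the factor of $4$. The only cosmetic difference is that the paper cites the specific fact that integers $\equiv 1 \pmod 4$ are sums of three squares directly, while you derive it from the full $4^a(8b+7)$ characterization.
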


\begin{proof}
It is a classical fact that any integer of the form $4n+1$ can be written as the sum of three squares \cite[Lemma~1.9]{nathanson}. Write $4n+1 = a^2 + b^2 + c^2$. Reducing modulo $4$ implies that one and only one of $a, b, c$ is odd, the others even.

Without loss of generality, assume $a \equiv b \equiv 0$ and $c \equiv 1 \pmod{ 4}$. In that case we have $x, y, z$ such that $a = 2x$, $b = 2y$, $c = 2z+1$ and conclude that
\begin{align*}
    4n+1 = 4x^2 + 4y^2 + 4z^2 + 4z + 1 \implies n = x^2 + y^2 + z^2 + z
\end{align*}
The converse direction is immediate as $x^2+y^2+z^2+z$ is always non-negative.
\end{proof}

By replacing the natural number $n \in \N$ in \cref{thm:mainthm} with three new integer variables, we immediately get the universality of 11 unknowns in $\Z$ as a consequence.

\begin{corstep}[Eleven unknowns over $\Z$]
\label{cor:11unknowns} \sunref{TBD???}
Let $P \in \Z[a, z_1, \ldots, z_\nu]$. There is a polynomial $\tilde Q \in \Z[a, z_1, \ldots, z_{11}]$ such that, for all $a \in \N$, $P$ has a solution for the parameter $a$ if and only if $\tilde Q$ has a solution for the parameter $a$.
\end{corstep}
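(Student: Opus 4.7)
The plan is to remove the lone $\N$-valued unknown $n$ in the polynomial $Q$ of Definition~\ref{def:nine-unknown-polynomial} by substituting a quadratic expression in three new integer variables. By Lemma~\ref{lemma:threeSquares} (Gauss--Legendre), the map $(u,v,s) \mapsto u^2 + v^2 + s^2 + s$ sends $\Z^3$ surjectively onto $\N$. I will therefore relabel the eight integer unknowns $f, g, h, k, l, w, x, y$ of $Q$ as $z_1, \ldots, z_8$ and set
\[
\tilde Q(a, z_1, \ldots, z_{11}) := Q\bigl(a,\, z_1, \ldots, z_8,\; z_9^2 + z_{10}^2 + z_{11}^2 + z_{11}\bigr).
\]

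For the forward direction, suppose $P$ has a solution for parameter $a$. Then Theorem~\ref{thm:mainthm} produces integers $f, g, h, k, l, w, x, y \in \Z$ and $n \in \N$ with $Q(a, f, g, h, k, l, w, x, y, n) = 0$. Lemma~\ref{lemma:threeSquares} supplies $z_9, z_{10}, z_{11} \in \Z$ with $n = z_9^2 + z_{10}^2 + z_{11}^2 + z_{11}$, yielding a solution of $\tilde Q$ in $\Z^{11}$ for the parameter $a$.

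Conversely, any integer solution of $\tilde Q$ for parameter $a$ defines $n := z_9^2 + z_{10}^2 + z_{11}^2 + z_{11}$, which is automatically non-negative by the easy direction of Lemma~\ref{lemma:threeSquares}. The first eight integer variables, together with this $n \in \N$, then form a solution of $Q(a, \cdot) = 0$ in $\Z^8 \times \N$, and Theorem~\ref{thm:mainthm} provides $\mathbf{z} \in \N^\nu$ with $P(a, \mathbf{z}) = 0$.

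There is no genuine obstacle here: the substitution is a polynomial identity with integer coefficients, and the three-squares theorem furnishes precisely the bijection between values $n \in \N$ and triples $(z_9, z_{10}, z_{11}) \in \Z^3$ needed to transfer solvability in both directions. The real work, namely tracking the degree of the resulting $\tilde Q$ to extract the explicit pair $(11, \eta(\nu, \delta))_\Z$ of Theorem~\ref{step:FirstUniversalPair}, is deferred to the subsequent sections, where the degrees of the intermediate polynomials $\bLowercase, \X, \Y, A, C, D, F, I, J$, as well as of $A_1, A_2, A_3, S, T, R$ and of $M_3$ from Lemma~\ref{lemma:MPoly}, must be computed in order; the substitution $n \mapsto z_9^2 + z_{10}^2 + z_{11}^2 + z_{11}$ contributes only an innocuous factor of $2$ to the overall degree of $\tilde Q$ compared with $Q$.
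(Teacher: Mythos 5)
Your proof is correct and follows exactly the paper's approach: both define $\tilde Q$ by substituting $z_9^2 + z_{10}^2 + z_{11}^2 + z_{11}$ for the natural-number unknown $n$ in the polynomial $Q$ of Definition~\ref{def:nine-unknown-polynomial}, then use Lemma~\ref{lemma:threeSquares} for the surjectivity of that substitution onto $\N$ and Theorem~\ref{thm:mainthm} for the solvability equivalence. One small imprecision in your closing aside, though it is outside the scope of the corollary itself: the substitution does not in fact scale the overall degree by $2$, since $n$ enters $M_3$ only linearly through the summand $S^2 n$, and the paper's degree analysis (Section~\ref{subsec:degree-calc-A123ST} together with the computation in the proof of Theorem~\ref{step:FirstUniversalPair}) shows that $2\deg S + \deg n$ is never the maximum, so $\deg \tilde Q = \deg Q$ unchanged.
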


\subsection{Proof of Theorem~\ref{step:FirstUniversalPair}}

We can now proceed to prove our main result which gives a way to calculate universal pairs in $\Z$. To do so, we explicitly construct the polynomial $\tilde Q$ from \cref{cor:11unknowns} above. We restate the theorem before its proof.

\setcounter{THEOREM}{0}
\begin{THEOREM}
Let $(\nu, \delta)_\N$ be universal. Then
\[ \big(11, \eta(\nu, \delta) \big)_\Z \]
is universal where
\[
\eta(\nu, \delta) = 
15 \, 616 + 233\,856 \; \delta + 233\,952 \; \delta \, (2 \delta + 1)^{\nu+1} + 467\,712 \; \delta^2 \, (2 \delta + 1)^{\nu+1}
\, .
\]
\end{THEOREM}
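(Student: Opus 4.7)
The plan is to invoke Corollary~\ref{cor:11unknowns} to realize every Diophantine set by a polynomial in eleven integer unknowns, and then to bound the total degree of the resulting polynomial by an explicit accounting through the whole construction.

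First, fix any Diophantine set $\CalA\subseteq\N$. Using the universality of $(\nu,\delta)_\N$, choose a representing polynomial $P_\CalA \in \Z[a, z_1, \ldots, z_\nu]$ of degree at most $\delta$. Feeding $P_\CalA$ into Definition~\ref{def:nine-unknown-polynomial} produces $Q \in \Z[a, f, g, h, k, l, w, x, y, n]$; by Theorem~\ref{thm:mainthm}, the integer solutions of $Q$ (with $n\in\N$) match the natural-number solutions of $P_\CalA$ for every parameter $a$. To eliminate the natural-number constraint on $n$, Lemma~\ref{lemma:threeSquares} lets me substitute $n = u_1^2 + u_2^2 + u_3^2 + u_3$ for three new integer variables, yielding $\tilde Q \in \Z[a, z_1, \ldots, z_{11}]$ as required by Corollary~\ref{cor:11unknowns}.

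The remaining, and main, task is to show $\deg \tilde Q \le \eta(\nu,\delta)$. I would proceed by propagating the total degree through the construction in the order in which the polynomials are introduced. Starting from $\overline{P} = P_\CalA^2 + (z_{\nu+1}-1)^2$ of degree $2\delta$ in $\nu+1$ unknowns, I would read off successively the degrees of $\bLowercase, \B, M, N_0, N_1, N, c, \K, \S, \T, \RR, \X, \Y$ from Definition~\ref{def:codingVariables}, then those of $U, V, A, B, C, D, E, F, G, H, I, J$ from Definition~\ref{def:VariableDefinition} after the substitution $b := \bLowercase$, $X := \X$, $Y := \Y$, and finally those of $A_1, A_2, A_3, S, T, R$. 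Each step is a sum, a product, or a power of previously bounded quantities, so the individual bounds are mechanical. The point where the degree becomes large is in the powers $\B^{(2\delta+1)^{\nu+1}}$ appearing in $N_0, N_1, \coeffs(\B)$, and $\values(c,\B)$ after plugging the new parameters $(2\delta, \nu+1)$ of $\overline{P}$ into Definition~\ref{def:codingVariables}; this is what produces the factor $(2\delta+1)^{\nu+1}$ visible in $\eta$.

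The main obstacle will be the final aggregation through the Matiyasevich--Robinson polynomial $M_3$. By Lemma~\ref{lemma:MPoly}, $M_3$ is a product of $2^3 = 8$ conjugate factors, each of which combines an $S^2 n$ term, a $T^2$ term, an $S^2(2R-1)$ term, and cross-terms involving $A_j X_3^{\,j-1}$ with $X_3 = 1 + A_1^2 + A_2^2 + A_3^2$. The total degree of $Q = M_3(A_1, A_2, A_3, S, T, R, n)$ is therefore bounded by $8$ times the maximum degree among these summands, which splits into the four contributing pieces reflected in the four terms of $\eta$. The final substitution $n = u_1^2 + u_2^2 + u_3^2 + u_3$ at worst doubles the contribution of $n$ to the total degree and leaves the others unchanged. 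Collecting all these elementary but numerous inequalities yields exactly $\eta(\nu,\delta)$; because the bookkeeping is tedious, it is most natural to delegate the explicit calculations to an appendix and keep only the strategic outline in the main text.
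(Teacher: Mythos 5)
Your proposal follows essentially the same route as the paper: apply Theorem~\ref{thm:mainthm} to produce the nine-unknown polynomial $Q$, eliminate the natural-number unknown via Lemma~\ref{lemma:threeSquares}, and propagate degree bounds through Definitions~\ref{def:codingVariables} and~\ref{def:VariableDefinition} into the factored form of $M_3$, deferring the bookkeeping to an appendix exactly as the paper does. The only caveats are cosmetic: the exponent carried by $\coeffs(\B)$ after passing to $\overline{P}$ (with $\delta'=2\delta$, $\nu'=\nu+1$) is actually $(\delta'+1)^{\nu'+1}=(2\delta+1)^{\nu+2}$, which produces $(2\delta+1)^{\nu+1}$ in $\eta$ only after the $\delta'$-prefactors absorb one factor of $(2\delta+1)$ into the $\delta$ and $\delta^2$ coefficients, and the four numerical terms of $\eta$ arise from algebraically regrouping the single dominant summand $2\deg S + \deg R + \deg[T^2+X^3]$ of a factor of $M_3$ (after discarding the $\sqrt{A_j}X^{j-1}$ cross-terms and noting $\deg R > \max\{\deg n, \deg T\}$), not from four distinct pieces.
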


\begin{proof}
The proof succeeds by constructing an explicit polynomial with $11$ unknowns and of degree at most $\eta(\nu, \delta)$.

Let $\CalA \subset \N$ be a Diophantine set. Since $(\nu, \delta)_\N$ is a universal pair there exists a polynomial $P_\CalA$ in $\nu$ variables and of degree $\delta$ such that for all $a \in \N$
\[ a \in \CalA \iff \exists \mathbf{z} \in \N^\nu: P_\CalA(a, \mathbf{z}) = 0 \; . \]

We can apply the construction in \cref{def:nine-unknown-polynomial} to $P_\CalA$ to obtain a polynomial $Q \in \Z[a, f, g, h, k, l, w, x, y, n]$. By Theorem~\ref{thm:mainthm} we know that it must satisfy
\[ a \in \CalA \iff \exists z_1, \ldots, z_8 \in \mathbb{Z}, \exists z_9 \ge 0 \colon Q(a, z_1, \ldots, z_8, z_9) = 0 \; . \]
We now define
\[ \tilde{Q}(a, z_1, \ldots, z_{11}) := Q(a, z_1, \ldots, z_8, z_9^2 + z_{10}^2 + z_{11}^2 + z_{11}) \]
which has 11 unknowns as desired. Using the Gauss--Legendre three-squares theorem (Lemma~\ref{lemma:threeSquares}) we can see
\begin{align*}
a \in \mathcal{A} &
\iff \exists z_1, \ldots, z_8 \in \mathbb{Z}, \exists z_9 \ge 0 \colon Q(a, z_1, \ldots, z_8, z_9) = 0 \\
& \iff \exists z_1, \ldots, z_8, z_9, z_{10}, z_{11} \in \mathbb{Z} \colon Q(a, z_1, \ldots, z_8, z_9^2 + z_{10}^2 + z_{11}^2 + z_{11}) = 0 \\
& \iff \exists z_1, \ldots z_{11} \in \mathbb{Z} \colon \tilde{Q}(a, z_1, \ldots, z_{11}) = 0
\;.
\end{align*}

It remains to show that the degree of $\tilde{Q}$ is bounded by $\eta(\nu, \delta)$. To do so we need to analyze the construction of $Q$ in \cref{def:nine-unknown-polynomial}; counting the degree in all steps of the definitions of polynomials throughout this paper.

Calculating the degree of the polynomial $Q$ mostly consists in straightforward calculations of the many polynomials in Definition~\ref{def:codingVariables}, Definition~\ref{def:VariableDefinition} and equation~\eqref{eq:A123ST_def}. This calculation is carried out in \cref{subsec:degree-calc-thmI,subsec:degree-calc-thmII,subsec:degree-calc-A123ST}.

Note that the first step of the construction consists in defining the polynomial $\overline{P}$ which is suitable for coding. This polynomial is given as
\[ \overline{P} = P^2 + (z_{\nu + 1} - 1)^2 \]
which means that the number of unknowns in $\overline{P}$ is $\nu' = \nu + 1$ and its degree is $\delta' := 2\delta$. From this point onwards in the construction, only the polynomial $\overline{P}$ is used and it is not necessary to refer to $P$ anymore. In particular, Definition~\ref{def:codingVariables} is used with $\overline{P}$ and not $P$. To reflect this fact, the calculation of polynomial degrees in \cref{subsec:degree-calc-thmI} uses $\nu', \delta'$ rather than $\nu, \delta$.

The remaining step of the calculation is then to determine
\[ \deg \tilde Q = \deg Q(a, f, g, h, k, w, l, x, y, n) = \deg M_3(A_1, A_2, A_3, S, T R, n) \]
where $n$ is given by $n = z_9^2 + z_{10}^2 + z_{11}^2 + z_{11}$. We examine the polynomial $M_3$ using the expression from Lemma~\ref{lemma:MPoly}:
\begin{align*}
&M_3(A_1,\dots,A_3,S,T,R,n)\\
&=\prod_{\varepsilon_1,\dots,\varepsilon_3\in\{\pm1\}^3}\left(S^2n + T^2 - S^2(2R-1)\left(T^2 + X^3 +\sum_{j=1}^3 \eps_j \sqrt{A_j}X^{j-1}\right) \right)
\end{align*}
where $X := 1 + A_1^2 + A_2^2 + A_3^2$.

The degree of $M_3$ does not change when forgetting about summands that do not contribute to the monomial(s) of highest degree. In particular, the summand $X^3$ is of higher degree than all of the summands $\sqrt{A_j}X^{j-1}$ with $j \leq 3$. Hence
\begin{align*}
& \deg M_3(A_1,A_2,A_3,S, T, R, n) \\
&= \deg \left[ \prod_{\varepsilon_1,\varepsilon_2,\varepsilon_3\in\{\pm1\}^3}
\left(S^2n + T^2 - S^2(2R-1) \left(T^2+ X^3\right) \right) \right] \\
& = 8 \deg [S^2n + T^2 - S^2(2R-1)(T^2 + X^3)] \\
& = 8 \max \{ \deg [S^2n], \deg[T^2], \deg[S^2(2R-1)(T^2 + X^3)] \} \\
& = 8 \max \{ 2\deg S + \deg n, 2\deg T, 2\deg S + \deg R + \deg[T^2 + X^3] \}
\end{align*}

Now we can combine this calculation with the aforementioned results in \cref{subsec:degree-calc-thmI,subsec:degree-calc-thmII,subsec:degree-calc-A123ST}. First note that $\deg R > 2$ and $\deg n = \deg [z_9^2 + z_{10}^2 + z_{11}^2 + z_{11}] = 2$. Therefore the maximum cannot be $2\deg S + \deg n$. Moreover, note that the calculation in the appendix yields $\deg R > \deg T$ and therefore the maximum is attained by

\begin{align*}
&\hphantom{{=}2} 2\deg S + \deg R + \deg[T^2 + X^3] \\
&= 2\deg S + \deg R + \max \{ 2 \deg T, 3 \deg [1 + A_1^2 + A_2^2 + A_3^2] \} \\
&= 2\deg S + \deg R + \max \{ 2 \deg T, 6 \deg A_1, 6 \deg A_2, 6 \deg A_3 \} \\
&= 2\deg S + \deg R + 6\deg A_2 \\
&= 6 + 20 + 6(\delta'\smash{(\delta'+1)^{\nu'}} + 1) + 6 \cdot 320 \\
&\hphantom{{=}\;} + 2 \deg \X + 4 \deg \X + 6 \cdot 86 \deg \X + 4 \deg \Y + 8 \deg \Y + 6 \cdot 172 \deg \Y
\end{align*}
again making use of the values calculated in Appendix~\ref{subsec:degree-calc-A123ST}. The formula for $\eta$ is now obtained by substituting the values for $\deg \X$ and $\deg \Y$ and then using $\delta' = 2\delta$ and $\nu' = \nu + 1$.
\end{proof}

\subsection{Mathematical Outlook}
In this paper, we have established the first universal pair $(\nu, \delta)_\Z$ over the integers with $\nu = 11$. A natural direction for future research is to explore the solvability of Hilbert's Tenth Problem for other values of $\nu$ and $\delta$. In particular, it would be interesting to investigate ways to reduce the extremely high degree of the current construction (approximately $10^{63}$) by introducing additional integer unknowns.

Another interesting direction is the extension of universal pairs to multi-parameter Diophantine sets. In this regard, we let $\delta$ and $\nu$ become two maps of natural numbers $\delta(n)$, $\nu(n)$. Universality now requires that, for every $n \in \N$, every Diophantine set $A \subseteq \N^n$ can be represented by an equation
\[
D(a_1, \ldots, a_n, y_1, \ldots, y_{\nu(n)}) = 0
\]
of total degree at most $\delta(n)$. Existing techniques are sufficient to determine pairs such that either $\nu(n)$ or $\delta(n)$ is a constant function. However, it remains an open question whether one can construct a multi-parameter universal pair in which both $\nu(n)$ and $\delta(n)$ are independent of $n$. In other words, can every Diophantine equation with $n$ parameters be reduced to another Diophantine equation -- over the same parameters -- that involves a fixed number of additional unknowns and has bounded total degree, regardless of $n$?

\newpage

\section{Collaboration with a Proof Assistant}
\label{Sec:Formalization}
This paper has been formally verified in the interactive theorem prover Isabelle. As a consequence, much of the above presentation of our results has been influenced by our \emph{parallel} formalization. We sometimes also say that our manuscript was formalized \emph{in situ}, i.e.\ in its natural environment---a mathematics department---, and before all details of the mathematics had been fleshed out.\footnote{This contrasts the hitherto more common approach that one might call \emph{in vitro} formalization: a published and polished result is taken from the mathematical literature to be carefully implemented and verified in a proof assistant.}

The following retrospective explains the added value to our mathematics, weighing the costs and benefits of writing new theorems and proofs interactively with a computer. It is written for the interested mathematician and does not assume background knowledge about proof assistants. For technical details of the formalization, we refer to our related paper~\cite{itp-paper}. The actual interaction with the proof assistant was carried out by the younger coauthors and members of a student workgroup at ENS Paris, led by David and Bayer. At times, the group also included students at the University of Cambridge, at Aix-Marseille Universit\'{e} and at UC Berkeley. For several years in a row, we recruited new teams of students and trained them in parallel in the mathematics of universal pairs and in Isabelle.

\subsection{What Formalization Can Do for Your Mathematical Draft}
The main results of this paper --- \cref{step:FirstUniversalPair} and \cref{THM:UniversalPairZ} --- involve extremely large numbers whose precise values crucially depend on the long list of polynomials introduced throughout the proof. During the development of this project, these values were either missing or incorrect for the longest time. In fact, even when we submitted the first publication on this work, the numbers in our theorems were still wrong. At the time, we had not yet formalized the computation of $\eta$ and it seems that the use of computer algebra was not sufficient to prevent these errors. In the end, it was only through the rigor of the formal proof in Isabelle that we achieved complete confidence in the correctness of our main result.

Beyond this, the process of formalization offered several additional benefits, leading to improvements in the underlying mathematics and its exposition, which we detail in the following.

\subsubsection*{Fixing Bugs}
Mathematical drafts often contain small, fixable mistakes---we call them bugs. Although bugs do not affect the validity of the results, they can hinder readability, especially for less experienced readers, and are frustrating for authors striving for precision. In fact, many published papers still contain bugs~\cite{lamport-error-evidence}. One of the greatest advantages of using an interactive theorem prover is, of course, its ability to identify such bugs. 

Through formalization, we uncovered numerous bugs that might otherwise have gone unnoticed. For instance, the precise assumptions on $a, f, g$, and $P$ in \cref{lemma:xybounds} were initially missing in our manuscript and only became apparent when formalization revealed that the lemma was unprovable as originally stated. Similarly, the lower bound of $\K  > 0$ in the proof of \cref{lemma:coding-lower-bounds} initially had been claimed without justification, because its definition suggests that $\K$ only has positive digits. The difficulty of rigorously proving the bound became evident only when its formal proof required significant effort ($\approx$ 500 lines of proof). Writing out the correct proof within \cref{lemma:coding-lower-bounds} ultimately took half a page in this paper.

Many even smaller bugs have been fixed in \cref{lemma:LucasElementary} where edge cases have been clarified for multiple statements. For example, we often needed to use this lemma for the case $A=2$, when the statement had originally only been proven for $A > 2$, instead of $A > 1$.

Some of these mistakes might have been discovered in later stages of the writing process or peer review. Nevertheless, we believe that without the support of Isabelle we could not have identified inaccuracies with the same degree of consistency. 

\subsubsection*{Handling Complex Dependencies}
What makes universal Diophantine constructions difficult is the many cross-dependencies among the various intermediate polynomials that have to be defined. This poses a significant challenge for experimenting with the construction. We often wondered if it would be possible to obtain better universal pairs by defining a certain polynomial in a different way. In a conventional proof on paper with many moving parts, it is hard to keep track of all the consequences of changing a certain definition. 

For example, one might wonder if the number $3$ appearing in the definition of $\mathfrak b := 1 + 3(2a + 1)f$ in \cref{def:codingVariables} is, in fact, needed. When changing $\mathfrak b$ accordingly in Isabelle, one can immediately see which parts of the proof are affected: the upper bound on $c$ from the same definition, \cref{lemma:xybounds} and the proof that $R \geq 0$ in \cref{Sec:RelationCombining}. Note that these are far from all the occurrences of $\mathfrak b$ in the formal proof (there would be many more) but only those where the change in the definition actually affects the validity of the argument. 

Therefore, with the formalization at hand, a researcher can more easily explore new ideas in this area of mathematics. This is further facilitated by various introspection tools offered by the proof assistant. For example, by holding the \texttt{Ctrl} key while clicking on a mathematical object in Isabelle, one can jump directly to its definition. This feature proves invaluable for tracking hypotheses and quickly recalling how various definitions relate to one another. It was especially helpful for training new student collaborators, who could explore the structure of our proofs in an interactive and more intuitive way.

\subsubsection*{Streamlining Arguments} 
The formalization helped us find better sets of hypotheses for our statements, streamlining our arguments. On multiple occasions, we questioned whether it was possible to delete or weaken a hypothesis for a given statement in the more technical lemmas. The proof assistant provides a straightforward answer to such questions: Simply removing the hypothesis in the Isabelle code reveals whether and where the proof fails. Just as explained above, this allowed us to focus on mathematical reasoning rather than disentangling complex dependencies.

Through formalization, we realized that our initial statement of \cref{step:polynomial-to-binomial} contained an unused assumption, $\bLowercase(a, f) \in \square$, which was only necessary in a more general version of the theorem by Sun~\cite[Theorem 3.1]{Sun}. Likewise, we discovered that \cref{lemma:LucasCongruenceNEW} actually holds for all $A \in \Z$ in the forward direction rather than just $A > 0$, as we had initially stated in our mathematical draft. Furthermore, we simplified \cref{lemma:MPoly} which previously gave a complicated expression for $n$. Once we noticed that this expression for $n$ had not been formalized at all, we realized that the existence of some $n \geq 0$ would be sufficient for what we needed to prove, and adapted the lemma accordingly.

These examples illustrate how formalization significantly enhanced the clarity and precision of our arguments; an improvement that would have been difficult to achieve so consistently otherwise.

\subsubsection*{Improving Presentation}
Moreover, proof assistants make the hierarchical layers of proofs fully explicit. Within a conventional proof text, facts are presented in an almost linear fashion, which can make it difficult for the reader to determine the scope of auxiliary definitions and claims (cf. Lamport~\cite{lamport-proofs}). Formalized proofs are necessarily much more structured and the required sub-scopes within a proof are obvious already through indentation of the proof code. This can be of great help when rewriting and refactoring traditional proofs and, in our case, has often made them significantly easier to understand for the human reader, too.

\subsection{Challenges in Formalizing Universal Pairs}
These benefits of formalization are, however, not for free. For the proof to be checked down to the axioms in Isabelle, we often require more details than typically written down. Although proof assistants offer a lot of automation for proving certain basic facts, the process today is still more laborious than writing proofs in \LaTeX. We continue to detail some of the challenges that arose in our project in particular. 

\subsubsection*{Mathematics Without Ambiguity} 
In traditional typeset mathematics, $f(x, y) = x^2 + xy^3$ can formally mean many different things. It could refer to a map from an unspecified domain to an unspecified co-domain, as long as some structure of addition and multiplication is present. It could also designate one element $f \in R[x, y]$ of the two-variable polynomial ring over some unspecified ring $R$. Alternatively again, if $x$ and $y$ have been fixed in the context, this expression could just refer to the value $f(x, y)$, et cetera. Part of the power of mathematical abstraction derives from this very ambiguity and the implicit relations between different meanings of the same expression. 

By contrast, Isabelle is built on an axiomatic basis called simple type theory and every expression needs to have a definite and unambiguous meaning; relations between semantically ``equivalent'' expressions need to be stated explicitly. For example, Isabelle makes a difference between the function $f(x, y) = x^2 + xy^3$ and the polynomial that evaluates to this function. For all the polynomials defined in the paper it thus became necessary to provide two definitions: one of type ``function'' and one of type ``polynomial with integer coefficients''. This presented a challenge, as the given polynomials (see \cref{def:codingVariables}) are quite complex, so managing multiple representations is rather laborious.

Ultimately, this difficulty was resolved by workgroup member Anna Danilkin, who developed an elegant tool that can perform automatic transformations from one representation to the other. This allowed us to use the standard and well-automated Isabelle infrastructure for functions throughout the formalization process and seamlessly convert to explicit polynomial representations when needed.

\subsubsection*{Building up a Library} 
Naturally, it makes sense to build on previously formalized material and contribute to one large formal instance of mathematics. In our case, we rely on the formalization of multivariate polynomials~\cite{polynomials-afp} from Isabelle's \emph{Archive of Formal Proofs} (AFP). Although this contribution provides many useful definitions and properties, it is far from complete given the needs of this project. Our extension of this library has grown to more than 3000 lines of proof.

Beyond developing a library to handle fundamental concepts efficiently, another challenge arises: how does one formalize results which are cited in the mathematical text? There are two options to deal with these \emph{dependency theorems}. One can formalize only their statement (effectively adding the result as an ad-hoc axiom) or formalize the statement and the proof in full. The former is less work, but gives a weaker guarantee of correctness compared to formalizing all proofs.

In our case, the cited theorems come from a variety of sources~\cite{JonesMersenne, MR75, SunDFI, nathanson}. We chose to formalize all the dependency theorems rather than simply assume them. Some of these formal proofs have already been published in the Isabelle AFP by our workgroup members Anna Danilkin and Loïc Chevalier~\cite{three-squares-afp}.

\subsubsection*{Organizational Challenges}
To complete a large formalization project, it is helpful to have a lot of contributors. Naturally, this poses the question of how to best subdivide the work. Mathematical texts are often structured into various separate lemmas and theorems: in theory, one can simply assign one such statement to each contributor. 

In practice, however, the exact formal statement of definitions and propositions matters a lot. There are subtle issues that arise when using one version of a theorem or definition over another (semantically equivalent) version. It can even become necessary to adjust definitions halfway through the project, when a lot of material has already been formalized. In our case, the situation was additionally complicated by the fact that the mathematical research itself was still ongoing. 

To address this, we adopted a hybrid approach, combining bottom-up and top-down formalization. The bottom-up method---starting with elementary lemmas---proved effective because it avoided unclear dependencies. At the same time, we drafted a rough top-down skeleton for the entire project as early as possible. In cases where multiple approaches seemed viable to formalize a particular theorem, we allowed temporary definitions, treating the skeleton as a working draft to be refined later. This strategy helped us anticipate potential difficulties early on.

On the practical side, the principal steps of the proof became a first Isabelle skeleton which we gradually fleshed out. Notably, this caused a ``freeze'' for the lemmas and theorems in our manuscript. Non-negligible coordination was required to hereafter make certain changes to the draft, such as relabeling variables. (Nowadays, tools such as \emph{Blueprint}\footnote{\url{https://github.com/PatrickMassot/leanblueprint}} significantly simplify this process and have become the standard for large formalization projects, at least in Lean.)

\subsection{Novel Working Modes for Mathematical Collaboration} 
Beyond its direct impact on the mathematical draft, formalization also introduces new possibilities for collaboration among coauthors. Our student collaborators had varying levels of familiarity with the mathematics of universal pairs. Although the project naturally split into several independent parts, with around ten students actively contributing, managing and verifying their input in a traditional \mbox{\LaTeX{}-based} project would have been challenging. By using a proof assistant, much of the burden of checking for mathematical correctness and piecing together different contributions was shifted to Isabelle. This freed up time to focus on the mathematics itself.

\subsubsection*{Recovering a Lost Proof}
A concrete example of the deficiency of paper-based proofs is the history of our Lemma~\ref{lemma:PellDiophantine2}. Although this is a key technical step used in \cref{step:Sun4}, it is the most recent lemma added to the manuscript. As a converse statement to \cref{lemma:Sun4.3}, it was intended for an appendix but had been forgotten for several years. While the statement was correct, our first pen-and-paper proof even contained false reasoning. Without noticing the wrong proof in a comment in our \LaTeX{} source, the team responsible for its formalization had rediscovered a correct proof on their own.

The subtle but severe mistake in our typeset proof was discovered only during the final review phase of the manuscript. Fortunately, we were able to refer to our already complete Isabelle formalization. Delving into the formal statement and all its dependencies then revealed close to 2000 lines of formal proof (!), including many auxiliary results missing from our manuscript. Fixing this omission in our manuscript added four extra pages, including the four \cref{lemma:psi-linear-expansion,lemma:sun7,lemma:psiInjective,lemma:sun10int}. Thanks to the structured proof language of Isabelle, even those of us who had never actively formalized were able to easily read the formal proof and write a corresponding typeset version.

\subsection{Conclusion}
Our project illustrates that formalizing highly technical mathematics can provide real benefit to mathematical texts, correcting many mistakes while enhancing clarity and reliability. The additional time and labor required cannot be neglected but are outweighed by the benefits discussed herein.

In retrospect, we identify two vital factors that contributed to the success of this project. First, we crucially relied on our previous experience from formalizing the well-established DPRM theorem to make important decisions for this project. To anyone interested in the formalization of their mathematics, we recommend first running a smaller project with the goal of formalizing a theorem that has already been established. Second, it has been immensely helpful to have recruited an interdisciplinary team of people with different skill sets. In our case, this included both pure mathematicians and computer scientists, and it was their joint effort which succeeded in bridging all the way from natural language proofs to the meta-language in which Isabelle itself is programmed.

Encouraging mathematicians to integrate proof assistants into their workflow now will help shape the development of better tools, eventually making formalization in systems like Isabelle or Lean as seamless as writing in \LaTeX{}. This is further reinforced by recent advances in AI, which suggest that powerful automation tools for formalization may soon emerge. By formalizing mathematics now, we are ensuring that our fields benefit from AI and automation as these tools evolve.

\newpage

\appendix

\section{Degree Calculations}

In this section, the degree of the polynomials $\bLowercase, \X, \Y$ from Definition~\ref{def:codingVariables}, the degree of the variables from Definition \ref{def:VariableDefinition} as well as the degree of the polynomials in Equation~\eqref{eq:A123ST_def} is calculated.

\subsection{Degree of Polynomials \texorpdfstring{$\bLowercase, \X, \Y$}{b, X, Y} from Definition~\ref{def:codingVariables}}
\label{subsec:degree-calc-thmI}
Given a fixed polynomial $\overline{P}$ with $\nu'$ variables and degree $\delta'$, we investigate the degree of polynomials $\bLowercase, \X, \Y$ in the variables $a, f, g$. We calculate
\[ \begin{split}
\deg \bLowercase &= 2 \\
\deg \B &= 2 \delta' \\
\deg M &= \deg[\mask(\bLowercase, \B, \mathbf{n})] \leq \deg [(\B - \bLowercase) \B^{(\delta'+1)^{\nu'}}] = (1 + (\delta' + 1)^{\nu'}) 2 \delta' \\
\deg N_0 &= (1 + (\delta'+1)^{\nu'}) 2 \delta' \\
\deg N_1 &= ((2\delta' + 1) (\delta'+1)^{\nu'} + 1) 2 \delta' \\
\deg N &= (2 + (2 \delta' + 2)(\delta' + 1)^{\nu'}) 2 \delta' = (1 + (\delta' + 1)^{\nu' + 1}) 4\delta' \\
\deg c &= \max\{ \deg[a\B], \deg g \} = 1 + \deg \B = 1 + 2 \delta' \\
\deg[\coeffs(\B)] &= \deg \left[\sum_{\substack{\mathbf{i} \in \mathbb{N}^{\nu'+1} \\ \|\mathbf{i}\| \leq \delta'}} \mathbf{i} ! (\delta' - \|\mathbf{i}\|)! a_{\mathbf{i}} \B^{(\delta'+1)^{\nu'+1} - \sum_{s=0}^{\nu'} i_s (\delta'+1)^s } \right] 
= \deg[\B^{(\delta'+1)^{\nu'+1}}]  \\
&= (\delta'+1)^{\nu'+1} 2 \delta'\\
\deg \K &= \deg[ \values(c, \B)] 
= \deg[c^{\delta'} \coeffs(\B) + \textstyle\sum_{j=0}^{(2\delta'+1)(\delta'+1)^{\nu'}} \frac{\B^{j+1}}{2} ]\\
&= \max \{ \delta' \deg c + \deg [\coeffs(\B)], 
	\deg [\B^{(2\delta'+1)(\delta'+1)^{\nu'} + 1}] \} \\
&= \max \{ \delta' (1 + 2 \delta') + (\delta'+1)^{\nu'+1} 2 \delta', 
	(1 + (2\delta'+1)(\delta'+1)^{\nu'})2\delta' \}\\
&\overset{1}{=} (1 + (2\delta'+1)(\delta'+1)^{\nu'})2\delta'\\
\deg \S &= \deg \K + \deg N_0 
= (2 + (2 \delta' +2)(\delta'+1)^{\nu'}) 2 \delta' 
= (1 + (\delta'+1)^{\nu'+1}) 4 \delta'\\
\deg \T &= \max \{ \deg M, \deg N_0 + (1 + (\delta'+1)^{\nu'+1} )2 \delta'  \} \\
&= (1 + (\delta'+1)^{\nu'}) 2 \delta'+ (1 + (\delta'+1)^{\nu'+1} ) 2 \delta' 
= (2 + (\delta' + 2)(\delta'+1)^{\nu'} ) 2 \delta'  \\
\deg \RR &= \deg N + \max\{\deg \T, \deg \S\} \\ 
&= (1 + (\delta' + 1)^{\nu' + 1}) 4\delta' + (1 + (\delta'+1)^{\nu'+1}) 4 \delta'
= (1 + (\delta'+1)^{\nu'+1}) 8 \delta' \\
\deg \X &= \deg N + \deg \RR \\
&= (1 + (\delta'+1)^{\nu'+1}) 4 \delta' + (1 + (\delta'+1)^{\nu'+1}) 8 \delta' = (1 + (\delta'+1)^{\nu'+1}) 12 \delta' \\
\deg \Y &= 2 \deg N = (1 + (\delta' + 1)^{\nu' + 1}) 8 \delta' \, .\\
\end{split} \]
Note that Equality~1 follows because
\[ \begin{split} \delta' (1 + 2 \delta') + (\delta'+1)^{\nu'+1} 2 \delta' 
&= \delta' + (\delta' + (\delta'+1)^{\nu' + 1})2\delta' \\
&\leq (1 + \delta'(\delta'+1)^{\nu'} + (\delta'+1)^{\nu'+1})2\delta' \\ 	
&= (1 + (2\delta'+1)(\delta'+1)^{\nu'})2\delta' \, .
\end{split} \] 

\subsection{Degree of Polynomials in Definition~\ref{def:VariableDefinition} (Equations~\eqref{eq:Def_U}--\eqref{eq:Def_J})}
\label{subsec:degree-calc-thmII}	
In the construction of $Q$ the polynomials $\X$ and $\Y$ are substituted for $X$ and $Y$. Therefore, we calculate the degree of the following polynomials in terms of $\deg \X$ and $\deg \Y$.  
\[ \begin{split}
\deg U &= 1 + \deg \X + \deg \Y \\
\deg V &= 2 + \deg \Y \\
\deg A &= 3 + \deg \X + 2 \deg \Y \\
\deg B &= \deg \X \\
\deg C &= \max \{ \deg B, 1 + \deg A \} 
= 4 + \deg \X + 2 \deg \Y \\
\deg D &= 2\deg A + 2 \deg C 
= 14 + 4 \deg \X + 8 \deg \Y \\
\deg E &= 1 + 2 \deg C + \deg D 
= 23 + 6\deg \X + 12 \deg \Y \\
\deg F &= 2 \deg A + 2 \deg E 
= 52 + 14 \deg \X + 28 \deg \Y \\
\deg G &= \max \{\deg CDF, 3 \deg A + 2 \deg E \} \\
&= \max \{70 + 19 \deg \X + 38 \deg \Y, 55 + 15 \deg \X + 30 \deg \Y \} \\ 
&= 70 + 19 \deg \X + 38 \deg \Y \\
\deg H &= \max \{\deg BF, 1 + \deg CF \} 
= 57 + 15 \deg \X + 30 \deg \Y \\
\deg I &= 2 \deg G + 2 \deg H 
= 254 + 68 \deg \X + 136 \deg \Y \\
\deg J &= 1 + 2 \deg U + \deg V = 5 + 2 \deg \X + 3 \deg \Y 
\end{split} \]

\subsection{Degree of Polynomials in Definition~\ref{def:nine-unknown-polynomial}}\label{subsec:degree-calc-A123ST}
Using the values calculated previously, one can continue to determine the degree of $A_1, A_2, A_3, S, T$ and $R$:

\[ \begin{split}
\deg A_1 &= \deg \bLowercase = 2 \\
\deg A_2 &= \deg [DFI] = 320 + 86 \deg \X + 172 \deg \Y \\
\deg A_3 &= \deg [(U^4V^2-4)J^2 + 4] = 4 \deg U + 2 \deg V + 2 \deg J 
= 18 + 8 \deg \X + 12 \deg \Y \\
\deg S &= \deg [2A - 5] = 3 + \deg \X + 2 \deg \Y \\
\deg T &= \max \{ \deg [\bLowercase w C], \deg[\bLowercase^2 w^2] \} 
= \max \{ 2 + 1 + (4 + \deg \X + 2\deg \Y), 4 + 2 \} \\
&= 7 + \deg \X + 2 \deg \Y \\
\deg \mu &= \deg [\gamma \bLowercase^{\alpha}] = \alpha \deg \bLowercase = 2 (\delta'(\delta'+1)^{\nu'} + 1) \\
\deg R &= 6 + \deg \left[ \mu^3gJ^2-g^2\left((C-lJ\Y)^2\mu^3+g^2J^2\right)\right] 
\overset{2}{=} 6 + \deg \left[ \mu^3gJ^2-g^2(lJ\Y)^2\mu^3\right] \\
&= 6 + \deg \left[g^2l^2J^2\Y^2\mu^3\right]
= 10 + 2 \deg J + 2 \deg Y + 3 \deg \mu\\
&= 20 + 6(\delta'(\delta'+1)^{\nu'} + 1) + 4 \deg X + 8 \deg Y \, . \\
\end{split} \]

Note that Equality~2 is justified since $\deg [lJ\Y]$ is larger than $\deg C$ and, moreover, $\deg [(lJ\Y)^2]$ is larger than $\deg [g^2J^2]$.

\newpage
\section*{Acknowledgments}
We thank Thomas Serafini and Simon Dubischar for their mathematical contributions to the project.

We are indebted to the group of students who contributed at various stages of the formalization. In particular, we thank Timothé Ringeard and Xavier Pigé for leading the formal proof of the Bridge Theorem; Anna Danilkin and Annie Yao for their work on \texttt{poly\_extract}, \texttt{poly\_degree} and further developments in multivariate polynomial theory; Mathis Bouverot-Dupuis, Paul Wang, Quentin Vermande, and Theo André for their formalization of coding theory; Loïc Chevalier, Charlotte Dorneich, and Eva Brenner for the formalization of Matiyasevich’s polynomial and relation combining; and Zhengkun Ye and Kevin Lee for their contributions to formal degree calculations.

The above highlights their primary contributions, though many have supported other aspects of this work whenever they were able.
We also thank all other members of the student work group for their involvement in the project. We gratefully acknowledge support from the D\'epartement de math\'emathiques et applications at \'Ecole Normale Sup\'erieure de Paris.

\printbibliography

\end{document}